\documentclass[11pt]{amsart}

\usepackage{amssymb,amsmath,amsthm,amsfonts, xypic}
\usepackage{hyperref} 

%
\setlength{\topmargin}{0pt}
\setlength{\oddsidemargin}{.0in}
\setlength{\textwidth}{6.0truein}
\setlength{\textheight}{8.8truein}
\setlength{\evensidemargin}{.0in}
\setlength{\parskip}{4pt}

\newtheorem{theorem}{Theorem}[section]
\newtheorem{corollary}[theorem]{Corollary}
\newtheorem{lemma}[theorem]{Lemma}

\newtheorem{proposition}[theorem]{Proposition}

\newtheorem{remark}[theorem]{Remark}

\setcounter{theorem}{0}

\newcommand{\del}[2]{{}}

\newcommand{\C}{\mathbb C}

\newcommand{\R}{\mathbb R}
\newcommand{\Q}{\mathbb Q}
\newcommand{\Z}{\mathbb Z}

\newcommand{\gt}{\tilde \Gamma}

\def\O{{\mathcal O}}

\def\E{{\mathcal E}}
\def\a{{\mathfrak a}}

\def\p{{\mathfrak p}}
\def\g{{\mathfrak g}}
\def\P{{\mathbb P}}
\def\H{{\mathbb H}}
\def\sl{{\rm SL_2}}

\def\sl{{\rm SL_2}}
\def\psl{{\rm PSL_2}}
\def\gl{{\rm GL_2}}

\def\tr{{\rm tr}}

\title{On the Dimension of Cohomology of Bianchi Groups}

\author{Mehmet Haluk \c{S}eng\"{u}n}
\email{M.H.Sengun@warwick.ac.uk}
\urladdr{http://warwick.ac.uk/haluksengun}
\address{Mathematics Institute, University of Warwick, Coventry, UK}

\author{Seyfi T\"urkelli}
\email{s-turkelli@wiu.edu}
\urladdr{http://www.wiu.edu/users/st110/}
\address{\mbox{\footnotesize{Department of Mathematics,} Western Illinois University, \footnotesize{Macomb}, USA}}  

\begin{document}

\maketitle

\begin{abstract}
Using Lefschetz numbers of certain involutions, we provide explicit lower bounds for the cuspidal cohomology 
of principal congruence subgroups of Bianchi groups. The asymptotic lower bounds that follow from 
our results complement recent results of Calegari-Emerton, Marshall and Finis-Grunewald-Tirao.   
\end{abstract}
 
\section{Introduction}
Bianchi groups are groups of the form $\sl(\O)$ where $\O$ is the ring of integers of an imaginary 
quadratic field. Just as the cohomology of the classical modular group $\sl(\Z)$ is central to 
the theory of classical modular forms, the cohomology of Bianchi groups is central to the study of 
Bianchi modular forms, that is, modular forms over imaginary quadratic fields. 

Understanding the behavior of the dimension of the cohomology of Bianchi groups and their congruence subgroups is a long open problem. Up to date, there is no explicit dimension formula of any sort. Utilizing the compactification theory of Borel-Serre (which basically amounts to closing the cusps of the 3-folds associated to Bianchi groups with 2-tori), we can decompose the cohomology into two parts: the cuspidal part and the Eisenstein part. While it is easy to compute the dimension of the Eisenstein part, understanding the dimension of the cuspidal part is very hard.

In 1984 Rohlfs, developing an idea that goes back to Harder (see the end of \cite{harder-75}), provided in \cite{rohlfs-84} an explicit lower bound for (the cuspidal part of) the first cohomology with trivial complex coefficients of Bianchi groups. Around the same time, Kr\"amer, mainly using techniques developed by Rohlfs, made these lower bounds sharper. In their recent paper \cite{fgt}, Finis, Grunewald and Tirao provided explicit lower bounds for the cuspidal part of the first cohomology with certain non-trivial coefficient systems of Bianchi groups. 

There has been significant recent developments in understanding the behavior of the dimension asymptotically. In \cite{calegari-emerton} Calegari and Emerton, using techniques from non-commutative Iwasawa theory, provided asymptotic upper-bounds for the first cohomology, with a fixed coefficient system, as one goes down in a tower of principal congruence subgroups of prime-power level of a fixed Bianchi group.  In a complementary direction, Marshall proved in \cite{marshall}, using the approach of Calegari and Emerton,  an asymptotic upper-bound for the first cohomology of a congruence subgroup of a Bianchi group as the coefficient system varies. 

In this paper, we provide explicit lower bounds for the cuspidal cohomology of principal congruence subgroups of Bianchi groups, using results and methods produced by Rohlfs \cite{rohlfs-78} and Blume-Nienhaus \cite{blume}. As a by product, we derive asymptotic lower bounds, complementing the above mentioned work of Calegari-Emerton and Marshall. A summary of the method and the results is provided in Section \ref{summary} for the convenience of the reader.  We discuss the case of general involutions in Section \ref{section_LFPT}, we specialize to the involutions induced by complex conjugation and twisted complex conjugation in Section \ref{section_LNSI}. In Section \ref{section_eisenstein}, we discuss the contribution of the cohomology of the boundary. The asymptotic lower-bounds are presented in Section 
\ref{section_asymptotics}. 

Our main contribution is the computation of the traces of the above mentioned special involutions acting on the Eisenstein part of 
the cohomology of the principal congruence subgroups. The case of the first cohomology is especially hard. In this case, we employ the explicit cocycles 
of Sczech and utilize results of Ito to obtain our results. While the concrete nature of the Sczech cocyles allows an elementary and quick treatment, 
it also limits the scope of our results. In principle, Harder's theory of the Eisenstein cohomology could be used to obtain more complete results 
at the cost of losing the elementary nature of Sczech cocyles' setting.

{\bf Acknowledgments} We thank Steffen Kionke and Joachim Schwermer, who recently announced some asymptotic lower 
bounds for the first Betti numbers of arithmetic hyperbolic 3-manifolds,  for bringing to our attention a mistake in an earlier 
version of this paper. The first author thanks the Max Planck Institute for Mathematics for the hospitality he received during the 
the preparation of this paper.

\subsection{Set-up} 
Fix a square-free negative integer $d \not = -1,-3$, let $K$ be the imaginary quadratic field $\Q(\sqrt{d})$ with class number $h$ and ring of integers $\O$. Let $G$ be the associated Bianchi group $\sl (\O)$ and $\Gamma$ be a finite index subgroup of $G$. Given a nonnegative integer $k$, let $E_k$ be the space of homogeneous polynomials over $\C$ in two variables of degree $k$  with the following $\Gamma$-action: given a polynomial $p(x,y)\in E_k$,  
$$p(x,y)\cdot \left(\begin{smallmatrix} a&b \\ c&d\end{smallmatrix}\right)=p(ax+by,cx+dy).$$ 
Let $E_{k,k}:=E_k\otimes_{\C} \overline{E_k}$ is a $\Gamma$-module where the action of $\Gamma$ on the second component is twisted by the conjugation. 
 
 The group $G$ acts discontinuously as isometries on the hyperbolic 3-space $\H \simeq \C \times \R^+$ and the quotient $Y_\Gamma := \Gamma\backslash\H$ has the structure of an hyperbolic $3$-fold. Let $\E$ be the local system on $Y_\Gamma$ induced by some complex finite-dimensional $\Gamma$-representation $E$. It is well known that $Y_\Gamma$ is an Eilenberg-MacLane space for $\Gamma$ and so $$H^n(\Gamma,E)\cong H^n(Y_\Gamma,\E).$$


\subsection{Summary of Results}\label{summary}

 Let $\sigma\in G(K/\Q)$ be the only  nontrivial element; that is, the complex conjugation. Suppose that  $\sigma$ acts on $E$  and  $\Gamma$ in a compatible way so that it induces an action on the cohomology $H^i(\Gamma, E)$. Since $\sigma$ is an involution, the eigenvalues of this action is $\pm 1$ and so the trace $\tr(\sigma\mid H^i(\Gamma,E))$ is an integer. 
 
 One defines the \emph{Lefschetz number} of $\sigma$ as the following integer 
$$L(\sigma, \Gamma, E)=\sum_{i} (-1)^i \tr (\sigma \mid H^i(\Gamma, E)).$$
 
These Lefschetz numbers were first considered by Harder in \cite{harder-75} where he computed them to give lower bounds for the cohomology of certain types of principal congruence subgroups $\Gamma$ with $E=\C$. In his 1976 Bonn Habilitation Rohlfs developed tools to compute these Lefschetz numbers for general arithmetic groups. In 1984, Rohlfs used these tools to provide lower bounds for the Lefschetz number for the case $\Gamma=\sl(\O)$ and $E=\C$. Later that year, in his Bonn Ph.D. thesis, Kr\"amer gave a closed formula for the Lefschetz number for the same case. These techniques were further developed by Blume-Nienhaus 
in his 1992 Bonn Ph.D. thesis where he provided the Lefschetz numbers for general $E_{k,k}$. The following, see Proposition \ref{prop_lefprin}, is an analogue of his results for principal congruence subgroups.
 
  \begin{proposition}\label{thm_lefprincipal}
 Let $N>2$ be a positive integer and $\Gamma(N)$ be the principal congruence subgroup of $\sl(\O)$ of level $(N)$. Then
$$L(\sigma,\Gamma(N),E_{k,k})=(A + 2B)   \dfrac{-N^3}{12} \prod_{p|N} (1-p^{-2}) \cdot (k+1)$$ 
where $A,B$ are explicit constants depending on the ramification data of $K/\Q$.
 \end{proposition}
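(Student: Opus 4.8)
The plan is to apply the Lefschetz fixed point formula of Rohlfs~\cite{rohlfs-78} developed in Section~\ref{section_LFPT}, which writes $L(\sigma,\Gamma(N),E_{k,k})$ as a sum of local contributions indexed by the connected components of the fixed-point locus of $\sigma$ on $Y_{\Gamma(N)}=\Gamma(N)\backslash\H$. These components are parametrized by the Galois cohomology set $H^1(\langle\sigma\rangle,\Gamma(N))$ of $\langle\sigma\rangle=G(K/\Q)$: to a class $\xi$ one attaches the twisted involution $\sigma_\xi$, its fixed subgroup $\Gamma(N)_\xi$, and the totally geodesic fixed locus $\H^{\sigma_\xi}\subset\H$. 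Since $\sigma$ is complex conjugation on $\H\simeq\C\times\R^+$, each $\H^{\sigma_\xi}$ is a copy of the hyperbolic plane $\H^2$, and each $\Gamma(N)_\xi$ is the level-$N$ principal congruence subgroup of an arithmetic Fuchsian group $G_\xi$ commensurable with $\sl(\Z)$. The formula then takes the shape
$$L(\sigma,\Gamma(N),E_{k,k})=\sum_{\xi\in H^1(\langle\sigma\rangle,\Gamma(N))}\chi\!\left(\Gamma(N)_\xi\backslash\H^{\sigma_\xi}\right)\cdot\tr\!\left(\sigma_\xi\mid E_{k,k}\right),$$
and the task is to evaluate its three ingredients: the coefficient trace, the individual Euler characteristics, and the number of classes.

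First I would dispose of the coefficient factor. Because $E_{k,k}=E_k\otimes_\C\overline{E_k}$ and $\sigma$ interchanges the two tensor factors up to conjugation, the operator $\sigma_\xi$ on $E_{k,k}$ is conjugate to the flip on $E_k\otimes\overline{E_k}$, whose trace is $\dim_\C E_k=k+1$ independently of $\xi$. This isolates the linear factor $(k+1)$ and reduces the problem to the geometric sum $\sum_\xi\chi(\Gamma(N)_\xi\backslash\H^2)$. Each Euler characteristic I would then compute multiplicatively: for $N>2$ the subgroup $\Gamma(N)_\xi$ is torsion-free and $\chi(\Gamma(N)_\xi\backslash\H^2)=[G_\xi:\Gamma(N)_\xi]\cdot\chi(G_\xi\backslash\H^2)$, where $\chi(G_\xi\backslash\H^2)=-\tfrac{1}{12}$ is the rational Euler characteristic of $\sl(\Z)$, a value I will verify is shared by every twisted form $G_\xi$ that occurs here.

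The heart of the argument, and the main obstacle, is the counting of classes in $H^1(\langle\sigma\rangle,\Gamma(N))$ together with their associated indices. I would exploit the normal inclusion $1\to\Gamma(N)\to\sl(\O)\to\sl(\O/N)\to1$ and the induced exact sequence of pointed $\langle\sigma\rangle$-cohomology sets to relate the classes for $\Gamma(N)$ to the classes for the full Bianchi group, whose weighted count is already known from \cite{blume}: here $A$ and $B$ record the two types of fixed loci according to the associated real form, and the weight $2$ their respective multiplicities. A mass-formula, orbit-counting argument over the finite group $\sl(\O/N)$ should then show that the classes of $H^1(\langle\sigma\rangle,\Gamma(N))$ lying above a fixed full-group class contribute total index $N^3\prod_{p|N}(1-p^{-2})$, uniformly in that class. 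Combining this with the previous two steps yields
$$\sum_{\xi}\chi(\Gamma(N)_\xi\backslash\H^2)=(A+2B)\cdot\left(-\tfrac{1}{12}\right)\cdot N^3\prod_{p|N}(1-p^{-2}),$$
and multiplying by the coefficient factor $(k+1)$ gives the stated formula. The delicate points to secure are the uniformity of the local index count in $N$ and the constancy of the value $-\tfrac{1}{12}$ across all twisted groups $G_\xi$; once these are in hand, the constants $A,B$ depend only on the ramification of $K/\Q$ through the full-group computation of \cite{blume}.
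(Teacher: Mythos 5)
Your overall strategy coincides with the paper's: apply Rohlfs' geometric Lefschetz formula (Theorem \ref{thm_lef}), observe that the coefficient trace on each surface component equals $k+1$ (via Lemma \ref{lem_trreform} and the cocycle relation $\gamma\,{}^{\sigma}\gamma=1$), and reduce to a weighted count of Euler characteristics of fixed-point components. But there is a genuine gap, and it sits exactly where the factor $2$ on $B$ comes from: your normalization produces $(A+B)$, not $(A+2B)$. In the paper, $A$ and $B$ are already the numbers of components of the two types (translates of $F(\gamma_1)$ and of $F(\gamma'_1)$), imported from Rohlfs' Theorem 4.1 of \cite{rohlfs-78} (quoted as Theorem \ref{thm_table}), and the $2$ is \emph{not} a multiplicity: it arises because the twisted fixed group $\Gamma^{\gamma'_1\sigma}$ is conjugate to an index-two subgroup of $\Gamma_N\subset\sl(\Z)$ (namely $\Gamma_N\cap\Gamma_0(2N)$ or $\Gamma_N\cap\Gamma_2$ according to $d\bmod 4$), so $\chi(\Gamma^{\gamma'_1\sigma})=2\,\chi(\Gamma_N)$ while $\Gamma^{\gamma_1\sigma}=\Gamma_N$ itself. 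Your twin assumptions that every twisted form $G_\xi$ has $\chi=-\tfrac{1}{12}$ and that the index contribution $N^3\prod_{p\mid N}(1-p^{-2})$ is uniform across classes erase precisely this distinction; the asserted uniformity is false, and "the weight $2$ records their multiplicities" does not correspond to anything in the actual geometry.

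Two further points need securing. The component count itself, which you defer to an unspecified mass-formula over $\sl(\O/N)$, is the real content of the step; the paper does not rederive it but cites Rohlfs, whose table shows $A$ and $B$ depend not only on the ramification of $K/\Q$ but also on the $2$-adic valuation $j_2$ of the level, so they cannot come solely from "the full-group computation of \cite{blume}." And you must rule out zero-dimensional components (classes in $H(\sigma,2)$, where $(\gamma\sigma)^2=-1$ and the trace on $E_{k,k}$ would be $(-1)^k(k+1)$ rather than $k+1$); this holds for $N>2$ because $-1\notin\Gamma(N)$, but your formula silently assumes every $\H^{\sigma_\xi}$ is a copy of $\H^2$. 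Once these are repaired --- cite Rohlfs for $A,B$, identify $\Gamma^{\gamma_1\sigma}=\Gamma_N$ with $\chi(\Gamma_N)=-\tfrac{1}{12}N^3\prod_{p\mid N}(1-p^{-2})$, and use the index-two computation for $\gamma'_1$ --- the argument closes as in the paper.
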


 The constants $A$ and $B$ are in fact certain powers of $2$ and they were computed by Rohlfs in \cite{rohlfs-78}. These constants vary depending on the ramification data of our imaginary quadratic field $K$ and the ideal $\a$.

Following Harder, we use the trace of the involution $\sigma$ on $H^i(\Gamma, E)$ to bound the dimension of this cohomology space from below. In order to carry this idea out, one needs to calculate the trace of $\sigma$ on the Eisenstein part of the cohomology as well. The following theorem generalizes a part of the results announced by Harder at the very end of \cite{harder-75}, see Theorem \ref{trace2_eisenstein}.

\begin{theorem} \label{thm_trace2eis}
 Let $t$ be the number of distinct prime divisors of the discriminant of $K/\Q$. Let $N=p_1^{n_1} \hdots p_r^{n_r}$ be a positive integer whose prime divisors $p_i$ are unramified in $K$ and let $\Gamma=\Gamma(N)$ be the associated principal congruence subgroup of the Bianchi group $\sl(\O)$.
  
We have $$\tr(\sigma\mid H^2_{Eis}(\Gamma, E_{k,k}))=-2^{t-1} \cdot \prod_{i=1}^r (p_i^{2n_i}-p_i^{2(n_i-1)}) +\delta(0,k),$$ where $\delta$ is the Kronecker $\delta$-function, in other words, $\delta(0,k)=0$ unless $k=0$ in which case $\delta(0,k)=1$. In particular, $$\tr(\sigma\mid H^2_{Eis}(\sl(\O), E_{k,k}))=-2^{t-1}+\delta(0,k).$$
\end{theorem}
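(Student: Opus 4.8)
The plan is to reduce the statement to a fixed-point count on the set of cusps, via the Borel--Serre boundary.

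First I would pass to the Borel--Serre compactification $\overline{Y_\Gamma}$, whose boundary $\partial\overline{Y_\Gamma}$ is a disjoint union of $2$-tori $T_{\mathfrak c}$ indexed by the cusps $\mathfrak c$ of $\Gamma=\Gamma(N)$, that is, by the $\Gamma$-orbits on $\P^1(K)$. By definition the Eisenstein cohomology $H^2_{Eis}(\Gamma,E_{k,k})$ is the image of the restriction map $r\colon H^2(\overline{Y_\Gamma},\E)\to H^2(\partial\overline{Y_\Gamma},\E)$, and this identification is $\sigma$-equivariant. Since $Y_\Gamma$ is homotopy equivalent to a $2$-complex, the connecting map in the long exact sequence of the pair lands in $H^3_c(Y_\Gamma,\E)\cong H_0(Y_\Gamma,\E)=(E_{k,k})_\Gamma$. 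For $k>0$ these coinvariants vanish, so $r$ is surjective and $H^2_{Eis}\cong H^2(\partial\overline{Y_\Gamma},\E)$ as $\sigma$-modules; for $k=0$ the obstruction is one-dimensional and will produce the $\delta(0,k)$ term, treated at the end.

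Next I would compute $\tr(\sigma\mid H^2(\partial\overline{Y_\Gamma},\E))$. Because $\sigma$ permutes the tori $T_{\mathfrak c}$ according to its action on cusps, a cusp sent to a different cusp contributes an off-diagonal block, so only $\sigma$-fixed cusps contribute to the trace. For a single torus, $H^2(T_{\mathfrak c},\E)\cong H_0(\Gamma_{\mathfrak c},E_{k,k})=(E_{k,k})_{\Gamma_{\mathfrak c}}$, where $\Gamma_{\mathfrak c}\cong\Z^2$ is the unipotent cusp stabiliser acting through a full lattice $L\subset\C$. A short computation with the two commuting unipotent generators shows, using that $L$ is full, that these coinvariants are one-dimensional, spanned by the class of $x^k\otimes\bar x^k$. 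For a $\sigma$-fixed cusp, $\sigma$ fixes this coinvariant line but reverses the orientation of $T_{\mathfrak c}$, since complex conjugation is orientation-reversing on $\C\supset L$; hence $\sigma$ acts by $-1$ on $H^2(T_{\mathfrak c},\E)$. Therefore
$$\tr(\sigma\mid H^2(\partial\overline{Y_\Gamma},\E))=-\#\{\sigma\text{-fixed cusps of }\Gamma(N)\}.$$

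The main obstacle is the fixed-cusp count, which must equal $2^{t-1}\prod_i(p_i^{2n_i}-p_i^{2(n_i-1)})$. For $N=1$ the cusps of $\sl(\O)$ are indexed by the class group $\mathrm{Cl}(K)$, on which $\sigma$ acts by $[\a]\mapsto[\bar\a]=[\a]^{-1}$; the fixed cusps are exactly the $2$-torsion classes, and $\#\mathrm{Cl}(K)[2]=2^{t-1}$ by genus theory, which matches the asserted $N=1$ case. For general $N$ I would describe the cusps of $\Gamma(N)$ as fibred over $\mathrm{Cl}(K)$, the fibre over a $\sigma$-fixed class being a quotient of the primitive vectors in $(\O/N)^2$, on which $\sigma$ acts by entrywise conjugation. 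Since each $p_i$ is unramified, the Chinese Remainder Theorem splits $\O/N$ into the local factors $\O/p_i^{n_i}$ and the $\sigma$-fixed count is multiplicative; the local contribution is the number $p_i^{2n_i}-p_i^{2(n_i-1)}$ of elements of $\O/p_i^{n_i}$ not divisible by $p_i$, the unramifiedness guaranteeing that this holds uniformly in the split and inert cases. Multiplying the base factor $2^{t-1}$ by these local factors yields the fixed-cusp count, and hence the trace $-2^{t-1}\prod_i(p_i^{2n_i}-p_i^{2(n_i-1)})$; this orbit-counting step is where the genuine work lies. Finally, for $k=0$ the cokernel of $r$ is one-dimensional and, by Lefschetz duality, dual to $H_0(Y_\Gamma,\C)=\C$ twisted by the sign of $\sigma$ on the orientation of the $3$-manifold $Y_\Gamma$; as $\sigma$ is orientation-reversing there, $\sigma$ acts by $-1$ on this line, so deleting it from $H^2(\partial\overline{Y_\Gamma},\C)$ increases the trace by $1$, accounting precisely for the $+\delta(0,k)$ correction and completing the proof of both displayed formulas.
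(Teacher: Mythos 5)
Your overall strategy --- decompose the boundary of the Borel--Serre compactification into tori, observe that only $\sigma$-fixed cusps can contribute to the trace, extract a sign $-1$ from orientation-reversal on each fixed torus, and handle $k=0$ via the one-dimensional cokernel $H^3_c(Y_\Gamma,\C)\cong\C$ on which $\sigma$ acts by $-1$ --- is sound and runs parallel to the paper's, which however routes the computation through Shapiro's lemma (working at the level of $\sl(\O)$ with the module $\C[\Gamma(N)\backslash \sl(\O)]\otimes E_{k,k}$) and through Poincar\'e duality down to $H^0$ of the boundary. But the entire content of the theorem is the fixed-point count, and there your argument has a genuine gap. You assert that the local contribution at $p_i$ is ``the number $p_i^{2n_i}-p_i^{2(n_i-1)}$ of elements of $\O/p_i^{n_i}$ not divisible by $p_i$,'' but the set you actually need to count is the set of $\sigma$-fixed $\Gamma(N)$-orbits of cusps, i.e.\ (already for the principal ideal class) unimodular pairs in $(\O/N\O)^2$ taken modulo $\pm 1$ and fixed by entrywise conjugation \emph{up to sign}. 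That is not the set you describe; it merely happens to have the same cardinality, and verifying this requires separating the classes $\{v,-v\}$ with $\sigma v=v$ from those with $\sigma v=-v$ and counting each eigenspace --- this is exactly where the split/inert dichotomy and the unramifiedness hypothesis do real work (compare the paper's identification of the relevant coset space with $\P^1(R)\times R^*$ and its separate treatment of the two cases). Moreover, multiplicativity of the fixed-point count over the prime factors of $N$ is not automatic from CRT, because the quotient by $\pm 1$ is global rather than local; a fixed class of the product need not come from a product of fixed classes.

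Two further points are passed over. First, for $k>0$ and a $\sigma$-fixed cusp $c$ other than $\infty$, the involution acting on the line $(E_{k,k})_{\Gamma_c}$ is not literally $\sigma$ but $\gamma\sigma$ for some $\gamma\in\Gamma$ carrying $\sigma(c)$ back to $c$; it acts by $\pm1$ on that line and you assert $+1$ without argument. The paper disposes of this (and of the non-principal $\sigma$-fixed ideal classes, which your ``quotient of the primitive vectors'' description does not literally cover) by invoking the Smillie--Vogtmann automorphism $\phi_A$ commuting with $\sigma$ to reduce every fixed cusp to $\infty$; some such device is needed. Second, your own remark that ``this orbit-counting step is where the genuine work lies'' is accurate: as written, the proposal is a correct plan rather than a proof of the displayed formula.
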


Computing the trace on the Eisenstein part of the first cohomology is more challenging as reported by Harder in \cite{harder-75}. He does not provide a proof but informs us that he uses the adelic setting and representation theoretic approach for his computations and his final result depends on certain factors in the functional equation of associated Hecke $L$-series. We provide a partial generalization of Harder's result, using an elementary approach which employs the cocycles of Sczech, see \cite{sczech}. These cocycles are defined by means of certain elliptic analogues of classical Dedekind sums, see Theorem \ref{trace1_eisenstein}

\begin{theorem} \label{thm_trace1eis}
Assume that $K$ is of class number one and let $p$ be a rational prime that is inert in $K$. Then we have 
$$ \tr(\sigma\mid H^1_{Eis}(\Gamma(p^n),\C)) = \begin{cases} -(p^2+1), \ \ \ \ \ \ \ \ \ \ \textrm{if} \ n=1 \\ 
                                                                              -(p^{2n}-p^{2n-2}), \ \ \ \textrm{if} \ n>1. \end{cases}$$ 
\end{theorem}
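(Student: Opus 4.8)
The plan is to realise $H^1_{Eis}(\Gamma(p^n),\C)$ by explicit cocycles and then read off the action of $\sigma$ from the transformation behaviour of Sczech's elliptic Dedekind sums. First I would use $H^1(\Gamma(p^n),\C)=\mathrm{Hom}(\Gamma(p^n),\C)$ together with the Borel--Serre picture of the boundary: $\partial Y_{\Gamma(p^n)}$ is a disjoint union of $2$-tori $T_c$, one for each cusp $c\in\Gamma(p^n)\backslash\P^1(K)$, and $H^1_{Eis}$ is the image of the restriction $r\colon H^1(Y_{\Gamma(p^n)},\C)\to H^1(\partial Y_{\Gamma(p^n)},\C)=\bigoplus_c H^1(T_c,\C)$. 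Since $\sigma$ normalises $\Gamma(p^n)$ (the level is a rational integer) and is induced by the isometry $(z,t)\mapsto(\bar z,t)$ of $\H$, the map $r$ is $\sigma$-equivariant, so $\sigma$ preserves this Lagrangian and $\tr(\sigma\mid H^1_{Eis})$ is well defined. The essential point is that, in contrast with $H^2_{Eis}$ --- which by Theorem \ref{thm_trace2eis} is forced by the boundary alone (it is the kernel of the augmentation $H^2(\partial Y_{\Gamma(p^n)})\to\C$) --- the Lagrangian $H^1_{Eis}\subset H^1(\partial)$ is \emph{not} determined by the topology of the boundary: it records which boundary classes extend over $Y_{\Gamma(p^n)}$, and that is exactly the information carried by Sczech's cocycles.

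Next I would fix a concrete spanning set. Sczech's construction \cite{sczech} attaches to the data $(\O,p^n)$ an explicit $1$-cocycle on $\sl(\O)$, built from the elliptic Dedekind sums, whose restriction to $\Gamma(p^n)$ yields homomorphisms $\Gamma(p^n)\to\C$ spanning $H^1_{Eis}(\Gamma(p^n),\C)$; evaluating such a class on the stabiliser of a cusp $c$ returns the Dedekind-sum period attached to $c$. In this basis the action of $\sigma^{*}$ is the composite of the permutation $c\mapsto\sigma(c)$ of cusps induced by $[a:c']\mapsto[\bar a:\bar{c'}]$ on $\P^1(K)$ with the conjugation $z\mapsto\bar z$ of the underlying lattice $\O$. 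The transformation of the elliptic Dedekind sums under this conjugation is precisely what the results of Ito supply; feeding their formula in determines the matrix of $\sigma^{*}$ on the Sczech classes.

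I would then compute the trace. The $\sigma$-fixed cusps fall into two families --- the classes of primitive vectors $(a,c')\bmod p^n$ that are conjugation-\emph{invariant} (the ``real'' ones, $a,c'\in\Z/p^n$) and those that are conjugation-\emph{anti-invariant} (the ``imaginary'' ones, $a,c'\in\sqrt d\,\Z/p^n$), each family of size $\tfrac12(p^{2n}-p^{2n-2})$ after dividing by $\pm1$ --- while cusps genuinely moved by $\sigma$ contribute $0$. Assembling, via Ito's transformation law, the weights attached to these fixed cusps gives the value $-(p^{2n}-p^{2n-2})$ for $n>1$.

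I expect the main obstacle to be the level $n=1$ case. There the fixed-cusp count produces only $p^{2}-1$ classes, so the target value $-(p^2+1)$ cannot arise from any assignment of signs $\pm1$ to fixed cusps alone; the discrepancy of $2$ must reflect the genuinely non-diagonal, cusp-mixing nature of the Eisenstein classes, together with the fact that the Dedekind sums are governed by arithmetic modulo $p$ rather than modulo $p^n$. Pinning down this extra contribution --- equivalently, evaluating Sczech's cocycle on the residual relations that appear only at level $p$ through Ito's explicit values --- is the delicate step, and it is what separates the anomalous answer $-(p^2+1)$ from the stable answer $-(p^{2n}-p^{2n-2})$.
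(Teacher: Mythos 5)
You have the right ingredients (Sczech's cocycles, Ito's transformation law, the parametrization of cusps by primitive vectors), but the computational mechanism you propose --- a signed count over $\sigma$-fixed cusps, with moved cusps contributing $0$ --- is not how the trace actually arises, and you yourself concede it cannot produce $-(p^2+1)$ at $n=1$. The point you are missing is that Ito's formula does \emph{not} make $\sigma$ act as a (signed) permutation of the Sczech classes: it expresses $\Psi(u,v)(\bar A)$ as a full linear combination $\frac{-1}{N^2}\sum_{s,t}\phi(s\bar v-t\bar u)\Psi(s,t)(A)$ over \emph{all} pairs, a finite-Fourier-transform-type matrix. After eliminating $\Psi(0,0)$ via the relation $\Psi(0,0)(\bar A)=-\Psi(0,0)(A)$, every one of the $N^4-1$ remaining basis vectors $\Psi(u,v)$ acquires the \emph{same} diagonal coefficient $\frac{-1}{N^2-1}$, whether or not the corresponding cusp is fixed by $\sigma$; the trace is then $(N^4-1)\cdot\frac{-1}{N^2-1}=-(N^2+1)$. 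No fixed-point count enters anywhere. Consequently your ``delicate step'' at $n=1$ is in fact the trivial base case: for $N=p$ with $p$ inert and $h(K)=1$, the number of cusps is $p^4-1$, so the nonzero Sczech classes are exactly a basis of $H^1_{Eis}(\Gamma(p),\C)$ and the trace is $-(p^2+1)$ on the nose. Your picture has the difficulty inverted.

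For $n>1$ there is a second gap you do not address: the $p^{4n}-1$ nonzero Sczech classes overcount the $p^{4n}-p^{4n-4}$ cusps, so one must isolate the trace on the subspace spanned by the classes attached to \emph{primitive} pairs. The paper does this by observing that, because $p$ is inert, the imprimitive pairs are precisely the image of the level-$p^{n-1}$ pairs under the natural inclusion $\frac{1}{p^{n-1}}\O/\O\hookrightarrow\frac{1}{p^n}\O/\O$, compatibly with the $\sigma$-action, so the answer telescopes: $-(p^{2n}+1)-\bigl(-(p^{2n-2}+1)\bigr)=-(p^{2n}-p^{2n-2})$. (This is also exactly where the splitness obstruction lies, as the paper's remark explains.) That your heuristic count of fixed cusps happens to output $-(p^{2n}-p^{2n-2})$ for $n>1$ is a numerical coincidence, not a proof; as it stands the argument establishes neither case.
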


We believe that the above result should generalize higher class numbers as well.  
Our results so far allow us to get explicit lower bounds for the cuspidal cohomology of certain principal congruence subgroups that are stabilized by complex conjugation. These explicit lower bounds yield the following asymptotic bounds, Proposition \ref{prop_asym} below.  For a related result, see the article \cite{rohlfs-speh} of Rohlfs and Speh. 

\begin{corollary}\label{cor_asym}
 Let $p$ be a rational prime that is unramified in $K$ and let $\Gamma(p^n)$ denote the  principal congruence subgroup of level $(p)^n$ of a Bianchi group $\sl(\O)$. Then, as $k$ increases and $n$ is fixed 
$$ \dim H^1_{cusp}(\Gamma(p^n),E_{k,k})\gg k$$ 
where the implicit constant depends on the level $\Gamma(p^n)$ and the field $K$. 
Assume further that $K$ is of class number one and that $p$ is inert in $K$. Then, as $n$ increases 
$$ \dim H^1_{cusp} (\Gamma(p^n),\C)\gg p^{3n}$$ 
where the implicit constant depends on the field $K$. 
\end{corollary}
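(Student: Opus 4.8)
The plan is to run Harder's Lefschetz-number argument, feeding in the explicit quantities computed earlier in the paper. Since $\sigma$ commutes with the Hecke action and preserves the Borel--Serre boundary, the decomposition $H^i(\Gamma,E_{k,k})=H^i_{cusp}\oplus H^i_{Eis}$ is $\sigma$-stable, so the Lefschetz number splits as $L(\sigma,\Gamma,E_{k,k})=L_{cusp}+L_{Eis}$, where $L_{cusp}=\sum_i(-1)^i\tr(\sigma\mid H^i_{cusp})$ and likewise for the Eisenstein part. As $\sigma$ is an involution its eigenvalues on each space are $\pm 1$, so $\dim H^1_{cusp}\ge|\tr(\sigma\mid H^1_{cusp})|$; the whole point is to bound the right-hand side from below by exploiting $L_{cusp}=L-L_{Eis}$, where $L$ is already known from Proposition \ref{thm_lefprincipal}.

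First I would reduce $L_{cusp}$ to a single trace. For a Bianchi $3$-fold the cuspidal (interior) cohomology is concentrated in degrees $1$ and $2$, so $L_{cusp}=-\tr(\sigma\mid H^1_{cusp})+\tr(\sigma\mid H^2_{cusp})$. Complex conjugation acts on $\H\simeq\C\times\R^+$ by $(z,t)\mapsto(\bar z,t)$, hence reverses the orientation of $Y_\Gamma$, while $E_{k,k}=E_k\otimes\overline{E_k}$ is self-dual as a $\Gamma$-module. The Poincaré duality pairing between $H^1_{cusp}$ and $H^2_{cusp}$ therefore satisfies $\langle\sigma^*\alpha,\sigma^*\beta\rangle=-\langle\alpha,\beta\rangle$, which forces $\tr(\sigma\mid H^2_{cusp})=-\tr(\sigma\mid H^1_{cusp})$. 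Consequently $L_{cusp}=-2\tr(\sigma\mid H^1_{cusp})$, and so
$$\dim H^1_{cusp}(\Gamma(p^n),E_{k,k})\ \ge\ \tfrac12\bigl|\,L(\sigma,\Gamma(p^n),E_{k,k})-L_{Eis}\,\bigr|.$$
It now suffices to show that in each regime the known main term $L$ dominates $L_{Eis}$.

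Substituting $N=p^n$ into Proposition \ref{thm_lefprincipal} gives $L=-(A+2B)\frac{p^{3n}}{12}(1-p^{-2})(k+1)$, a fixed positive multiple of $p^{3n}(k+1)$ in absolute value. In the first regime ($n$ fixed, $k\to\infty$) the $H^0_{Eis}$ contribution vanishes for $k>0$ and the $H^2_{Eis}$ contribution is constant in $k$ by Theorem \ref{thm_trace2eis}; the degree-one term is controlled by $|\tr(\sigma\mid H^1_{Eis})|\le\dim H^1_{Eis}$. Since $H^1_{Eis}$ injects into the boundary cohomology $\bigoplus_{\text{cusps}}H^1(T^2,E_{k,k})$, and the cusp unipotent group $U\cong\Z^2$ is Zariski dense in the complexified unipotent, the invariants $E_{k,k}^U$ collapse to the one-dimensional highest-weight line $\C\,x^k\otimes\bar x^k$; thus each cusp contributes exactly $2$ and $\dim H^1_{Eis}$ is bounded independently of $k$. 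Hence $L_{Eis}=O_n(1)$ while $|L|\gg_n k$, giving $\dim H^1_{cusp}\gg k$. In the second regime ($K$ of class number one, $p$ inert, $k=0$, $n\to\infty$) I would insert the explicit values $\tr(\sigma\mid H^0_{Eis})=1$, the value from Theorem \ref{thm_trace2eis}, and $\tr(\sigma\mid H^1_{Eis})$ from Theorem \ref{thm_trace1eis}; every class-number-one field with $d\ne-1,-3$ has prime-power discriminant, so $t=1$ and $2^{t-1}=1$, whence the degree-one and degree-two contributions match and $L_{Eis}$ is bounded. In any case $L_{Eis}=O(p^{2n})=o(p^{3n})$, so $|L_{cusp}|\sim|L|\asymp p^{3n}$ and $\dim H^1_{cusp}(\Gamma(p^n),\C)\gg p^{3n}$.

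The delicate point, and the main obstacle, is the orientation-reversing duality identity $\tr(\sigma\mid H^2_{cusp})=-\tr(\sigma\mid H^1_{cusp})$: this is what converts the single computable Lefschetz number into a genuine lower bound for $\dim H^1_{cusp}$, and it requires checking both that the duality pairing is $\sigma$-equivariant up to the orientation sign and that the cuspidal pieces coincide with the Poincaré-dual interior cohomology in degrees $1$ and $2$. The remaining work is bookkeeping: confirming in the first regime that $\dim H^1_{Eis}$ is independent of $k$ through the highest-weight computation of $E_{k,k}^U$, and verifying in the second regime that the explicit Eisenstein traces are genuinely of order $p^{2n}$, hence negligible against the $p^{3n}$ main term.
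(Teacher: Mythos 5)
Your proposal is correct and follows essentially the same route as the paper: you re-derive the inequality of Proposition \ref{lefschetzlowerbound} (Poincar\'e duality plus the orientation-reversal sign giving $\tr(\sigma\mid H^2_{cusp})=-\tr(\sigma\mid H^1_{cusp})$), and then feed in Corollary \ref{cor_lefprin} for the main term and Corollary \ref{cor_eisensteinbound}, Theorem \ref{thm_trace2eis} and Theorem \ref{thm_trace1eis} for the Eisenstein terms, exactly as the paper does. The only cosmetic difference is that you make explicit the observations (e.g.\ $t=1$ for class number one, $\dim E_{k,k}^U=1$) that the paper leaves implicit in its citations.
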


We also consider the Lefschetz numbers and the Eisenstein traces for the involution given by  the $\gl / \sl$-twist of complex conjugation. The results, when combined with those about complex conjugation, give a closed formula for the trace of $\sigma$ on the first cohomology of $\gl(\O)$, see Theorem \ref{thm_tracegl}. This implies the following asymptotics for the cohomology of $\gl(\O)$, see Corollary \ref{cor_asymgl}.

\begin{corollary}\label{cor_asymglintro}
 Let $D$ be the discriminant of $K/\Q$ and $\O_K$ be its ring of integers. As $K/\Q$ is fixed and $k\to \infty$, we have $$\dim H^1(\gl (\O_K),E_{k,k}) \gg k$$ where the implicit constant depends on the discriminant $D$. As $k$ is fixed and $|D|\to \infty$, we have $$\dim H^1(\gl(\O_K),E_{k,k}) \gg \varphi(D)$$ where $\varphi$ is the Euler $\varphi$ function and  the implicit constant depends on the weight $k$. 
\end{corollary}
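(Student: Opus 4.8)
The plan is to deduce both asymptotics directly from the closed formula for $\tr(\sigma\mid H^1(\gl(\O_K),E_{k,k}))$ provided by Theorem \ref{thm_tracegl}, via the same eigenvalue inequality that underlies Corollary \ref{cor_asym}. Since $\sigma$ is an involution, the space $H^1(\gl(\O_K),E_{k,k})$ splits into its $(+1)$- and $(-1)$-eigenspaces, of dimensions $d_+$ and $d_-$; then $d_++d_-=\dim H^1$ while $d_+-d_-=\tr(\sigma\mid H^1)$, so that
$$\dim H^1(\gl(\O_K),E_{k,k})\ \geq\ \bigl|\tr(\sigma\mid H^1(\gl(\O_K),E_{k,k}))\bigr|.$$
Note that here, unlike in Corollary \ref{cor_asym}, we may bound the \emph{full} first cohomology rather than only its cuspidal part, so no separate estimate of an Eisenstein trace is needed beyond what is already packaged into Theorem \ref{thm_tracegl}. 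It therefore suffices to show that the right-hand side grows at least linearly in $k$ when $K$ is fixed, and at least like $\varphi(D)$ when $k$ is fixed and $|D|\to\infty$.

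First I would read off from the closed formula of Theorem \ref{thm_tracegl} its dependence on the two parameters. The formula is, schematically, a main term equal to a nonzero $D$-dependent factor times $(k+1)$, plus a correction term that is bounded independently of $k$. For fixed $K/\Q$ the coefficient of $(k+1)$ is a fixed nonzero integer: it is assembled from the constants of Proposition \ref{thm_lefprincipal} together with the Eisenstein traces of Theorem \ref{thm_trace2eis}, and its nonvanishing is forced by the fact that these constants are positive powers of $2$, so no cancellation can occur. Consequently $\bigl|\tr(\sigma\mid H^1)\bigr|\gg k$ as $k\to\infty$, with the implicit constant being precisely this coefficient, which depends only on $D$. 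This gives the first claim.

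For the second claim I would fix $k$ and track the dependence of the main term on the discriminant, showing that the $D$-dependent factor is comparable to $\varphi(D)$ and that it dominates the bounded correction as $|D|\to\infty$. The main obstacle here is arithmetic bookkeeping rather than geometry: one must check that the correction terms, which enter through Eisenstein contributions of the shape $2^{t-1}$ with $t$ the number of prime divisors of $D$, cannot overwhelm the $\varphi(D)$-sized main term. This is harmless, since $t=O(\log|D|/\log\log|D|)$ gives $2^{t-1}=|D|^{o(1)}$, whereas $\varphi(D)\gg |D|/\log\log|D|$; thus the correction is absorbed into the implicit constant and $\bigl|\tr(\sigma\mid H^1)\bigr|\gg\varphi(D)$ survives, the implicit constant depending on $k$ through the factor $(k+1)$. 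Combining the two estimates with the eigenvalue inequality above yields the corollary.
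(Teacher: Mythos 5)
Your strategy is exactly the paper's: the paper proves this corollary in one line by combining the eigenvalue inequality $\dim H^1(\gl(\O_K),E_{k,k}) \geq |\tr(\sigma^1\mid H^1(\gl(\O_K),E_{k,k}))|$ with the closed formula of Theorem \ref{thm_tracegl}, whose inputs are the Lefschetz numbers of Theorem \ref{thm_leftau}. So the overall route is the same, and your extra bookkeeping for the second asymptotic (that $2^{t}=|D|^{o(1)}$ is swamped by $\varphi(D)$) is correct and in fact more explicit than what the paper writes down.

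One point in your argument is off, though. You justify the nonvanishing of the coefficient of $(k+1)$ by appealing to Proposition \ref{thm_lefprincipal} and Theorem \ref{thm_trace2eis} and to the fact that ``these constants are positive powers of $2$, so no cancellation can occur.'' The constants $A,B$ that are powers of $2$ belong to the principal congruence subgroup computation ($\Gamma(N)$, $N>2$); they play no role here. The relevant quantity is $L(\tau,\sl(\O),E_{k,k})+L(\sigma,\sl(\O),E_{k,k})$ from Theorem \ref{thm_leftau}, whose leading terms in $k$ are $\frac{-q}{12}\prod_{p\mid D,\,p\neq 2}\bigl(p+(q\,|\,p)\bigr)\prod_{p\mid D,\, p=2}\bigl(D_2+(q\,|\,2)\bigr)(k+1)$ for $q=1$ and $q=-1$; these enter with \emph{opposite} signs, so cancellation is a priori possible and must be ruled out. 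It can be (e.g.\ for odd $D<0$ one always has a prime $p\equiv 3 \bmod 4$ dividing $D$, forcing $\prod(p+(-1\,|\,p))<\prod(p+1)$, and the even-$D$ cases are similar), and the resulting difference of products is what produces both the nonzero $k$-coefficient and the $\varphi(D)$-sized growth in $|D|$ --- but your stated reason does not establish this. To be fair, the paper itself asserts the asymptotics without writing out this verification either.
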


As $H^1(\gl(\O),E_{k,k})$ embeds into $H^1_{cusp}(\sl(\O),E_{k,k})$, the asymptotic lower bounds of the above corollary also applies to $H^1_{cusp}(\sl(\O),E_{k,k})$. Rohlfs showed in \cite{rohlfs-84} that $H^1_{cusp}(\sl(\O),\C) \gg \varphi (D)$ as $|D| \rightarrow \infty$, yielding the same asymptotic as ours. We note that Kr\"amer also 
produces the upper bound $$ \dim H^1_{cusp}(\sl(\O),\C) \ll |D|^{3/2}.$$


\section{A Lefschetz fixed point theorem} \label{section_LFPT}
Let $\g=\langle \rho\rangle$ be a finite cyclic subgroup  of order $r$ of the automorphism group $\textrm{Aut}(G)$ of the Bianchi group 
$G$ (note that $\textrm{Out}(G)$ is finite elementary abelian 2-group which is explicitly determined by Smillie and Vogtmann in \cite{smilie-vogtmann}). Let $\Gamma$ be 
a $\g$-stable finite index subgroup of $G$ considered as a normal subgroup of the semidirect product  $\gt=\Gamma \rtimes \g $. The group $\gt$ has a natural action on $\H$ that extends the action of $\Gamma$ (thus, $\gt$ acts on $Y_{\Gamma}$).

 Let $E$ be a $\Gamma$-module with a $\g$-action such that this action is compatible with the action on $\Gamma$, that is, 
${}^{\rho}(g \cdot e) = {}^\rho g \cdot {}^\rho e$. Then $\g$ acts on the cohomology groups $H^i(\Gamma,E)$. Therefore, we can define the \emph{Lefschetz number} 
$$L(\rho, \Gamma, E)=\sum_i (-1)^i \tr ( \rho\mid H^i(\Gamma,E)).$$ 
  
  Given a subgroup $H\subseteq \gt$, let $\chi (H)$ denote the \emph{virtual Euler-Poincare} characteristic of $H$. If $H$ is a finite group, then it is well known that $\chi (H)=1/|H|$. Below, for $\gamma\in \Gamma$, by ``$\gamma\rho \ \ \mod \Gamma$" we mean the $\Gamma$-conjugacy class of $\gamma\rho$ in $\gt$. One of the main results of the Bonn PhD thesis of  Blume-Nienhaus \cite{blume}, which generalizes the Lefschetz trace formula to not necessarily torsion-free arithmetic groups, tells us the following.
  
\begin{theorem}[Blume-Nienhaus, I. 1.6]\label{thm_ltfblume}
Using the notation above, we have $$L(\rho,\Gamma,E)=\sum_{\substack{\gamma\rho \space \mod \Gamma \\ \gamma\rho \text{ is torsion} \\ \gamma\in \Gamma}} \chi(\Gamma^{\gamma\rho}) \cdot\tr(\gamma\rho\mid E)$$
where $\Gamma^{\gamma\rho}$ denotes the centralizer in $\Gamma$ of $\gamma\rho$.
\end{theorem}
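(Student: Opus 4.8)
The plan is to compute $L(\rho,\Gamma,E)$ at the level of a finite equivariant cochain complex and then to repackage the resulting alternating trace, cell by cell, as a sum over conjugacy classes of torsion elements. Since $H^i(\Gamma,E)\cong H^i(Y_\Gamma,\E)$ compatibly with $\g$, the operator $\rho$ is realized geometrically, and the natural finite model to use is the Borel-Serre bordification $\overline{\H}$: it is contractible, and $\gt=\Gamma\rtimes\g$ acts on it properly and cocompactly with finite stabilizers. Fixing a $\gt$-CW structure on $\overline{\H}$ (refined by barycentric subdivision so that every cell is fixed pointwise by its stabilizer), the quotient $\Gamma\backslash\overline{\H}$ is a finite complex carrying a cellular $\g$-action, and the Hopf trace formula identifies $L(\rho,\Gamma,E)$ with the alternating sum $\sum_i(-1)^i\tr(\rho\mid C^i(\Gamma\backslash\overline{\H},\E))$ of traces of $\rho$ on the finite-dimensional cochain groups.

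Next I would read off which cells contribute. The trace of $\rho$ on $C^i$ is supported on the $i$-cells of $\Gamma\backslash\overline{\H}$ that $\rho$ fixes; such a cell is a $\Gamma$-orbit $[\sigma]$ with $\rho\cdot\sigma=\gamma^{-1}\cdot\sigma$ for some $\gamma\in\Gamma$, equivalently with $\gamma\rho$ in the stabilizer of $\sigma$. Because the action is proper this stabilizer is finite, so exactly the torsion elements $\gamma\rho$ occur, and different cells in one $\Gamma^{\gamma\rho}$-orbit record $\Gamma$-conjugate elements. The local weight attached to such a cell is the trace of $\gamma\rho$ on the fiber of $\E$, which is $\tr(\gamma\rho\mid E)$; here the subdivision guarantees that $\gamma\rho$ fixes the cell pointwise, so no orientation sign intervenes. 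Geometrically the same positivity can be seen from $\det(1-d(\gamma\rho)|_N)>0$ for a finite-order isometry, each conjugate eigenvalue pair contributing $2-2\cos\theta>0$ and each eigenvalue $-1$ contributing $2$.

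I would then reorganize the double sum by grouping cells according to the $\Gamma$-conjugacy class of the torsion element $\gamma\rho$ they determine. For a fixed class the cells involved are precisely those of the fixed subcomplex $(\overline{\H})^{\gamma\rho}$, on which the centralizer $\Gamma^{\gamma\rho}$ acts, and the alternating count of these cells (with the constant weight $\tr(\gamma\rho\mid E)$ pulled out) is the Euler characteristic of $\Gamma^{\gamma\rho}\backslash(\overline{\H})^{\gamma\rho}$. Since the fixed-point set of a finite-order isometry of $\H$ is a totally geodesic, hence contractible, subspace, and its closure in the bordification remains contractible, this quotient computes the virtual Euler-Poincar\'e characteristic, so its Euler characteristic is exactly $\chi(\Gamma^{\gamma\rho})$. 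Summing over all classes yields the asserted formula $L(\rho,\Gamma,E)=\sum_{[\gamma\rho]}\chi(\Gamma^{\gamma\rho})\tr(\gamma\rho\mid E)$.

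The step I expect to be the main obstacle is the clean identification of the contribution of each conjugacy class with the virtual Euler characteristic $\chi(\Gamma^{\gamma\rho})$ rather than an ordinary one. This is exactly the point at which torsion in $\Gamma$ matters: when $\Gamma$ is not torsion-free the fixed subcomplexes are themselves orbifolds, the cell stabilizers must be tracked through the subdivision, and the passage from honest cell counts to $\chi(\Gamma^{\gamma\rho})=\chi(\Gamma'\cap\Gamma^{\gamma\rho})/[\Gamma^{\gamma\rho}:\Gamma'\cap\Gamma^{\gamma\rho}]$, for a torsion-free normal $\Gamma'\trianglelefteq\gt$ furnished by Selberg's lemma, has to be justified uniformly. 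Verifying this bookkeeping, and checking that contractibility of the fixed sets survives on the bordification including its boundary strata, is the heart of the Blume-Nienhaus generalization.
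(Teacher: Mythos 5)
The paper does not prove this statement at all: it is imported verbatim from Blume--Nienhaus's thesis (I.1.6), so there is no in-paper argument to compare yours against. Your overall architecture --- a contractible, cocompact $\gt$-complex coming from the Borel--Serre bordification $\overline{\H}$, the Hopf trace formula on the finite equivariant cochain complex, and a regrouping of the cellwise contributions by $\Gamma$-conjugacy classes of torsion elements of the coset $\Gamma\rho$ --- is indeed the standard route (Brown-style, and essentially what Blume--Nienhaus does), and your identification of the torsion bookkeeping as the crux is accurate.

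However, the middle of your sketch asserts two intermediate identities that are false precisely when $\Gamma$ has torsion, which is the whole point of the theorem. With finite nontrivial cell stabilizers, the complex computing $H^i(\Gamma,E)$ in characteristic $0$ is $\mathrm{Hom}_{\Gamma}(C_\bullet(\overline{\H}),E)$, whose degree-$i$ piece is $\bigoplus_{[\sigma]} E^{\Gamma_\sigma}$, not a sum of full copies of $E$. For a $\rho$-stable orbit $[\sigma]$ with ${}^{\rho}\sigma=\gamma^{-1}\sigma$, the induced endomorphism of $E^{\Gamma_\sigma}$ is $\gamma\rho$ composed with the averaging projector, so its trace is $\frac{1}{|\Gamma_\sigma|}\sum_{g\in\Gamma_\sigma}\tr(g\gamma\rho\mid E)$, not $\tr(\gamma\rho\mid E)$; correspondingly, the alternating count of cells in $\Gamma^{\gamma\rho}\backslash\overline{\H}^{\gamma\rho}$ must be weighted by $1/|\Gamma_\sigma|$ (the orbifold Euler characteristic) to give the virtual $\chi(\Gamma^{\gamma\rho})$ --- the naive Euler characteristic of the quotient complex that you invoke is a different number (e.g.\ it equals $1$ for a contractible quotient, whereas $\chi(\Gamma_N)$ is negative). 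Your two simplifications happen to be exactly what one gets for a free action, and the fact that expanding the averaged traces over the cosets $\Gamma_\sigma\gamma\rho$ and regrouping by conjugacy class simultaneously produces the full list of torsion classes and the factors $\chi(\Gamma^{\gamma\rho})$ is the actual content of the theorem; as written, your reorganization step does not go through. The remaining points you flag (contractibility of the closures of fixed sets in the bordification, independence of the subdivision) are genuine but secondary to this.
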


The relationship between Lefschetz fixed point formula and centralizers of torsion elements was observed also by Adem in \cite{adem}.
For our special representations $E_{k,k}$, the traces involved in the above formula can  easily be computed. 
 
 \begin{lemma}[Blume-Nienhaus, I.4.3]\label{lem_trreform}
 Let $\gamma\in \Gamma$ and $x=(\gamma\rho)^r$, where $r$ is the order of $\rho$. Then, $$\tr(\gamma\rho\mid E_{k,k})=\tr(x\mid E_k).$$
 \end{lemma}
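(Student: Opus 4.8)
The plan is to make the action of $\rho$ on $E_{k,k}=E_k\otimes_{\C}\overline{E_k}$ completely explicit and then read off both sides from a single monomial basis. Since these coefficient systems are built from the standard representation and its conjugate, the automorphism $\rho$ --- which for the involutions of interest is complex conjugation $\sigma$ or its $\gl/\sl$-twist --- acts by \emph{interchanging} the two tensor factors. The first thing I would record is the elementary compatibility between coefficient conjugation and the $\sl(\O)$-action: writing $c\colon E_k\to E_k$ for the conjugate-linear map that conjugates the coefficients of a polynomial, one has $c\,\pi(g)=\pi(\bar g)\,c$ for every $g$, where $\pi(g)$ denotes the matrix of $g$ acting on $E_k$ and $\bar g$ is the entrywise complex conjugate. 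Equivalently, in the monomial basis $e_j=x^jy^{k-j}$ the second factor $\overline{E_k}$ carries the matrix $\overline{\pi(g)}=\pi(\bar g)$.

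The key step is to identify the induced $\C$-linear involution on $E_{k,k}$. Because $E_k$ and $\overline{E_k}$ carry opposite complex structures, any factor-swap is conjugate-linear, so to obtain an honest $\C$-linear action of $\sigma$ one must compose the flip with $c$ on each factor; I would take $\Sigma(v\otimes\bar w)=(cw)\otimes\overline{(cv)}$. A short check shows $\Sigma^2=1$ and, using $c\,\pi(g)=\pi(\bar g)\,c$, that $\Sigma$ satisfies the compatibility ${}^{\sigma}(g\cdot e)=\bar g\cdot{}^{\sigma}e$ demanded in Section~\ref{section_LFPT}; this pins down $\Sigma$ as the operator by which $\sigma$ acts. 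For the twisted conjugation the same recipe applies after inserting the fixed twisting matrix, and the order-$r$ case is handled by the same bookkeeping.

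Finally I would compute the trace directly. The element $\gamma\rho=\gamma\sigma$ acts as $v\otimes\bar w\mapsto \pi(\gamma)\,cw\otimes\overline{\pi(\gamma)\,cv}$, and since the monomials $e_i$ have rational coefficients we have $ce_i=e_i$; hence
\[
(\gamma\sigma)(e_i\otimes\bar e_j)=\sum_{p,q}\pi(\gamma)_{pj}\,\overline{\pi(\gamma)_{qi}}\;(e_p\otimes\bar e_q).
\]
Reading off the diagonal coefficients and summing gives $\tr(\gamma\sigma\mid E_{k,k})=\sum_{i,j}\pi(\gamma)_{ij}\,\overline{\pi(\gamma)_{ji}}=\tr\big(\pi(\gamma)\pi(\bar\gamma)\big)$, where I used $\overline{\pi(\gamma)_{ji}}=\pi(\bar\gamma)_{ji}$. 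It remains to match this with the right-hand side: in $\gt=\Gamma\rtimes\g$ the relation $\sigma\gamma=\bar\gamma\,\sigma$ yields $x=(\gamma\sigma)^2=\gamma\bar\gamma\in\Gamma$, so $\tr(x\mid E_k)=\tr\big(\pi(\gamma\bar\gamma)\big)=\tr\big(\pi(\gamma)\pi(\bar\gamma)\big)$, exactly the quantity just computed. I expect the only real obstacle to be the conjugate-linear bookkeeping of the second step --- producing a genuinely $\C$-linear involution $\Sigma$ and keeping the actions on $E_k$ and $\overline{E_k}$ straight; once $\Sigma$ is correctly identified the trace computation is immediate, and the general order-$r$ statement follows by replacing $\gamma\bar\gamma$ with the norm $\prod_{i=0}^{r-1}{}^{\rho^i}\gamma=(\gamma\rho)^r$.
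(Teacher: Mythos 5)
Your computation is correct, but note that the paper does not actually prove this lemma: it is quoted from Blume-Nienhaus's thesis (I.4.3) and used as a black box, so your proposal supplies an argument where the paper supplies a citation. The route you take --- writing the action of $\gamma\rho$ on $E_{k,k}=E_k\otimes_{\C}\overline{E_k}$ explicitly in the monomial basis as a ``twisted flip'' $v\otimes\bar w\mapsto \pi(\gamma)w\otimes\overline{\pi(\gamma)v}$ and invoking the identity $\tr\bigl(v\otimes w\mapsto Bw\otimes Av\bigr)=\tr(AB)$ with $A=\overline{\pi(\gamma)}=\pi(\bar\gamma)$, $B=\pi(\gamma)$, matched against $(\gamma\sigma)^2=\gamma\bar\gamma$ --- is exactly the natural elementary verification, and the conjugate-linear bookkeeping you worry about is handled correctly (the $c$'s are forced precisely because the bare factor swap on $E_k\otimes\overline{E_k}$ is conjugate-linear). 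Two small caveats. First, compatibility with the $\Gamma$-action together with $\Sigma^2=1$ does \emph{not} by itself pin down $\Sigma$: $-\Sigma$ satisfies the same two conditions and would flip the sign of the trace. What actually fixes the normalization is the explicit prescription $\sigma(P\otimes Q)=Q\otimes P$ (and its $\beta$-twisted analogue for $\tau$) recorded in Section \ref{section_LNSI}, which your $\Sigma$ agrees with; you should cite that rather than claim uniqueness. Second, your argument is fully carried out only for $r=2$; the closing sentence about general $r$ and the norm $\prod_{i}{}^{\rho^i}\gamma$ is a plausible sketch but not a proof. Since the paper only ever applies the lemma to the order-two involutions $\sigma$ and $\tau$, this restriction is harmless for everything that follows, but it does mean you have verified a special case of the stated lemma rather than the lemma itself.
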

 
 \subsection{Shapiro's Lemma} \label{SL}
Using Shapiro's Lemma, we can relate the Lefschetz number $L(\rho,\Gamma,E)$ to the Lefschetz number 
$L(\rho,G,\textrm{Coind}_{\Gamma}^G(E))$ where 
$$\textrm{Coind}_\Gamma ^G (V) := \textrm{Hom}_{\Gamma}(G,V).$$
We define the $\rho$-action on the co-induced module
as follows:
$$ ({}^{\rho}f) (g) := {}^{\rho}(f({}^{\rho}g))$$
for every $f \in \textrm{Coind}_{\Gamma}^{G}(V)$ and $g \in G$. This action is compatible with the action of $\rho$ on $G$. 
Moreover, the Shapiro isomorphism respects the $\rho$-action. 

\begin{proposition}\label{shapiro} The isomorphism 
$$H^i(\Gamma,E) \simeq H^i(G,\textrm{Coind}_{\Gamma}^G(E))$$
respects the involutions induced by $\rho$ on both sides.
\end{proposition}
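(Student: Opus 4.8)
The plan is to exhibit the Shapiro isomorphism as a composite of two maps, each separately compatible with the $\rho$-actions, so that compatibility of the whole follows by functoriality. Concretely, I would use the factorization
$$ H^i(G,\textrm{Coind}_\Gamma^G(E)) \xrightarrow{\ \mathrm{res}\ } H^i(\Gamma,\textrm{Coind}_\Gamma^G(E)) \xrightarrow{\ (\mathrm{ev}_1)_*\ } H^i(\Gamma,E), $$
where $\mathrm{res}$ is restriction of cohomology from $G$ to $\Gamma$ and $(\mathrm{ev}_1)_*$ is induced by the $\Gamma$-module homomorphism $\mathrm{ev}_1\colon \textrm{Coind}_\Gamma^G(E)\to E$, $f\mapsto f(1)$; it is standard that this composite is the Shapiro isomorphism (up to inversion, which does not affect equivariance). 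It then suffices to prove that each of the two arrows is $\rho$-equivariant for the actions already in play, and for this I would work with the inhomogeneous bar resolution, where the action of $\rho$ on an $i$-cocycle $c$ (valued in either coefficient module) is given by $c\mapsto {}^\rho\!\circ\, c\circ(\rho^{-1})^{\times i}$, i.e. one precomposes with $\rho^{-1}$ on the $i$ group arguments and postcomposes with the relevant module twist.

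The equivariance of $\mathrm{res}$ is essentially formal and rests only on the $\rho$-stability of $\Gamma$. At the cochain level, $\mathrm{res}$ is literal restriction of a function along $\Gamma^i\hookrightarrow G^i$, while the $\rho$-twist precomposes with $(\rho^{-1})^{\times i}$ and postcomposes with the twist on $\textrm{Coind}_\Gamma^G(E)$. Since $\rho^{-1}(\Gamma)=\Gamma$, the map $(\rho^{-1})^{\times i}$ carries $\Gamma^i$ into $\Gamma^i$, and the coefficient twist is unchanged by restriction; hence restriction and the $\rho$-twist commute on the nose at the cochain level, and a fortiori on cohomology.

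The crux of the argument is the equivariance of the second arrow, which reduces to the claim that $\mathrm{ev}_1$ intertwines the $\rho$-action on $\textrm{Coind}_\Gamma^G(E)$ with the $\rho$-action on $E$. This I would check directly from the prescribed formula $({}^\rho f)(g)={}^\rho(f({}^\rho g))$: for any $f$,
$$ \mathrm{ev}_1({}^\rho f)=({}^\rho f)(1)={}^\rho\!\big(f({}^\rho 1)\big)={}^\rho\!\big(f(1)\big)={}^\rho\!\big(\mathrm{ev}_1 f\big), $$
using only that $\rho$, being an automorphism, fixes the identity. Thus $\mathrm{ev}_1$ is a $\rho$-equivariant morphism of coefficient modules, and since it is also $\Gamma$-linear it induces a $\rho$-equivariant map on $H^i(\Gamma,-)$. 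Composing the two equivariant arrows shows that the Shapiro isomorphism respects the involutions, as claimed.

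The main thing to be careful about is not a genuine obstruction but the consistent bookkeeping of conventions: left versus right actions in the definition of $\textrm{Coind}_\Gamma^G(E)$, and the direction of the twist ($\rho$ versus $\rho^{-1}$) in the induced cochain action, must be pinned down so that the action written in the statement is exactly the one making both $\mathrm{ev}_1$ and $\mathrm{res}$ equivariant. Once these are fixed, both equivariance statements collapse to the two elementary facts $\rho(1)=1$ and $\rho(\Gamma)=\Gamma$. If one prefers to avoid cochains altogether, the same conclusion follows conceptually: coinduction is right adjoint to restriction, the Shapiro isomorphism is the induced natural isomorphism of derived functors, and $\rho$ acts throughout as an automorphism of the pair consisting of the group and its coefficient module, so naturality forces the isomorphism to be $\rho$-equivariant.
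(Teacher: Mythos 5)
Your proof is correct, but it runs the Shapiro isomorphism in the opposite direction from the paper. The paper factors the map as $H^i(\Gamma,E)\xrightarrow{s^*}H^i(\Gamma,\textrm{Coind}_\Gamma^G(E))\xrightarrow{\textrm{cores}}H^i(G,\textrm{Coind}_\Gamma^G(E))$, where $s(e)(g)=g\cdot e$ for $g\in\Gamma$ and $0$ otherwise, and then verifies equivariance of the corestriction by an explicit cocycle computation with coset representatives; that computation only closes up because the class of $\textrm{cores}(f)$ is independent of the chosen representatives, so the final equality holds only up to a coboundary. You instead use the inverse factorization $\textrm{res}$ followed by $(\mathrm{ev}_1)_*$, and since an isomorphism is $\rho$-equivariant exactly when its inverse is, this is a legitimate substitute. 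What your route buys is economy: both arrows are equivariant \emph{on the nose} at the cochain level, the whole argument collapsing to $\rho(1)=1$ and $\rho(\Gamma)=\Gamma$, with no coset bookkeeping and no ``equality up to coboundary'' to justify. The one point you should make explicit is that $(\mathrm{ev}_1)_*$ commutes with the $\rho$-twist on $H^i(\Gamma,-)$ precisely because $\mathrm{ev}_1$ is a morphism of $\Gamma\rtimes\langle\rho\rangle$-modules, i.e.\ both $\Gamma$-linear and $\rho$-equivariant -- you verify both facts, so this is a matter of phrasing rather than a gap. The paper's more explicit treatment of corestriction does have the side benefit of exhibiting the cocycle-level formulas that make the compatibility with Hecke operators (noted in the remark following the proposition) transparent, but for the statement as given your argument is complete and cleaner.
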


\begin{proof} Let $s: E \rightarrow \textrm{Coind}_\Gamma ^G (E)$ be defined as $s(e)(g)=g \cdot e $ if $g \in \Gamma$ and 
$s(e)(g)=0$ otherwise. Then we have an induced map 
$$s^*: H^i(\Gamma, E) \rightarrow H^i(\Gamma, \textrm{Coind}_\Gamma ^G(E)).$$
It is easy to check that $s^*$ respects the involutions induced by $\rho$ on both sides. Now, the Shapiro isomorphism is given 
as 
$$H^i(\Gamma, E) \xrightarrow{s^*} H^i(\Gamma, \textrm{Coind}_\Gamma ^G(E)) \xrightarrow{ \textrm{cores}} H^i(G, \textrm{Coind}_\Gamma ^G(E)).$$
Recall that the corestriction map ``cores' can be given explicitly at the level of cocycles as follows. Let $\{ \gamma_i \}_i$ be a set of coset representatives of 
$\Gamma$ in $G$. Then for every $g \in G$ and there is a unique permutation $\phi_g$, that depends on $g$ and the $\{ \gamma_i \}_i$, 
such that  
$\gamma_{\phi_g(i)}^{-1}g \gamma_i \in \Gamma$. For a cocycle $f : \Gamma \rightarrow \textrm{Coind}_\Gamma ^G(E)$, 
we have 
$$ cores(f)(g) := \sum_i f(\gamma_{\phi_g(i)}^{-1}g \gamma_i)$$
for every $g \in G$.

To see that corestriction map respects the involutions induced by $\rho$ on both sides, note that for a given cocycle $f$, 
the class of $cores(f)$ is independent of the coset representatives $\{ \gamma_i \}$ that we chose. If $\{ \gamma_i \}_i$ 
is a set of coset representatives of $\Gamma$ in $G$, then $\{ \delta_i :={}^\rho \gamma_i \}_i$ is also such a set. For a given 
$g \in G$, let $\psi_g$ be the associated permutation with respect to the $\{ \delta_i \}_i$. Since $\Gamma$ is 
$\rho$-stable, we have $\phi_g(i) = \psi_{{}^\rho g}(i)$. It follows that 

\begin{eqnarray*}
cores({}^\rho f)(g) & = &  \sum_i ({}^\rho f)(\gamma_{\phi_g(i)}^{-1}g \gamma_i )  \\
                             &= &  \sum_i {}^\rho \left ( f({}^\rho \gamma_{\phi_g(i)}^{-1} {}^\rho g {}^\rho \gamma_i) \right ) \\  
                             &=& {}^\rho \left (     \sum_i f( \delta^{-1}_{\psi_{{}^\rho g}(i)}  {}^\rho g \delta_i  ) \right )  \\
                             &=^*&( {}^\rho cores(f))(g) \\ 
\end{eqnarray*}
The symbol $=^*$ means that the equality is up to a coboundary, this is because we have made a change of coset representatives 
in the expression of the corestriction map. This shows that at the level of cohomology classes, the corestriction map respects the involution 
that is induced by $\rho$. 
\end{proof}

\begin{remark} It is well-known that Shapiro's Lemma respects the action of Hecke operators. 
Therefore the above proposition holds when $H^i$ is replaced by $H^i_{cusp}$ and $H^i_{Eis}$ as well (see Section \ref{subsec_boundsviaLefschetz}). 
\end{remark}

For trace computations it is more convenient to replace $\textrm{Coind}_\Gamma ^G(E)$ with
$\C[\Gamma \backslash G] \otimes_\C E$. On the latter, the $G$-action is diagonal and the $\rho$-action is given as ${}^\rho [g\Gamma, e] := [{}^\rho g \Gamma, {}^\rho e]$. Fix a set of coset representatives $\{ \gamma_i \}_i$ of $\Gamma$ in $G$. The explicit identification is given as follows. 
$$\textrm{Coind}_\Gamma ^G(E) \xrightarrow{F_1} \C[G] \otimes_{\Gamma} E \xrightarrow{F_2} \C[\Gamma \backslash G] \otimes_\C E$$
where 
$$F_1(f) := \sum_i \gamma_i \otimes f(\gamma_i) \ \ \ \textrm{and} \ \ \ F_2( g \otimes e) := [ g\Gamma, g e] .$$
One can check that the above maps give rise to isomorphisms
$$H^i(G, \textrm{Coind}_\Gamma ^G(E)) \simeq H^i(G,\C[\Gamma \backslash G] \otimes_\C E) $$
which respect the action of the involution induced by $\rho$ on both sides.

\begin{lemma} 
 Let $\gamma\in G$ and $x=(\gamma\rho)^r$, where $r$ is the order of $\rho$. Then 
$$\tr(\gamma \rho\mid \C[\Gamma \backslash G] \otimes_\C E_{k,k})=\tr(\gamma \rho \mid \C[\Gamma \backslash G]) \cdot \tr(x\mid E_k)$$
\end{lemma}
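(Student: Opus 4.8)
The claim is that the trace of $\gamma\rho$ on the tensor product $\C[\Gamma\backslash G]\otimes_\C E_{k,k}$ factors as the product of the trace on the permutation module $\C[\Gamma\backslash G]$ and the trace of $x=(\gamma\rho)^r$ on $E_k$. The plan is to reduce the left-hand trace to a sum over fixed points of $\gamma\rho$ acting on the coset space $\Gamma\backslash G$, and then at each fixed coset apply the preceding Lemma \ref{lem_trreform} to replace the $E_{k,k}$-contribution by an $E_k$-trace.

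First I would analyze how $\gamma\rho$ acts on the chosen basis $\{[\gamma_i\Gamma]\}_i$ of $\C[\Gamma\backslash G]$. Writing $\gamma\rho\cdot[\gamma_i\Gamma]=[{}^\rho(\gamma\gamma_i)\,\Gamma]$ (using the diagonal/permutation action described just above the lemma), the element $\gamma\rho$ permutes the cosets, and its trace on $\C[\Gamma\backslash G]$ counts exactly the fixed cosets, i.e.\ those $i$ with ${}^\rho(\gamma\gamma_i)\in\gamma_i\Gamma$. On the tensor product, the matrix of $\gamma\rho$ with respect to the basis $\{[\gamma_i\Gamma]\otimes e_j\}$ is block-permutation: the $i$-th coset block maps into the $\phi(i)$-th, where $\phi$ is the coset permutation, and within a fixed block ($\phi(i)=i$) the action on the $E_{k,k}$-factor is by a specific group element. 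Hence only fixed cosets contribute to the trace, and
$$\tr(\gamma\rho\mid \C[\Gamma\backslash G]\otimes_\C E_{k,k})=\sum_{i:\,\phi(i)=i}\tr\!\bigl(w_i\mid E_{k,k}\bigr),$$
where $w_i\in\Gamma$ is the element realizing the return of the fixed coset (so that $w_i={}^\rho(\gamma\gamma_i)\gamma_i^{-1}$ up to the conventions of the diagonal action).

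Next I would compute $(w_i\rho)^r$ for each fixed coset and apply Lemma \ref{lem_trreform}. The key point is that the element $w_i$, once conjugated appropriately by $\gamma_i$, has the property that $(w_i\rho)^r$ is conjugate to $x=(\gamma\rho)^r$; this is essentially because passing around the $\rho$-orbit of a fixed coset $r$ times reconstitutes the $r$-th power of $\gamma\rho$ up to conjugacy by $\gamma_i$. By Lemma \ref{lem_trreform}, $\tr(w_i\mid E_{k,k})=\tr((w_i\rho)^r\mid E_k)=\tr(x\mid E_k)$, since the trace is conjugation-invariant. Thus every fixed coset contributes the \emph{same} factor $\tr(x\mid E_k)$, and the sum becomes
$$\tr(\gamma\rho\mid \C[\Gamma\backslash G]\otimes_\C E_{k,k})=\#\{i:\phi(i)=i\}\cdot\tr(x\mid E_k)=\tr(\gamma\rho\mid\C[\Gamma\backslash G])\cdot\tr(x\mid E_k),$$
which is exactly the asserted factorization.

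I expect the main obstacle to be the bookkeeping in the previous paragraph: verifying carefully that, at each fixed coset, the return element $w_i$ satisfies $(w_i\rho)^r\sim x$ so that Lemma \ref{lem_trreform} delivers a constant $\tr(x\mid E_k)$ independent of $i$. This requires tracking the diagonal action through the $\rho$-twist and checking that the cocycle of the $\rho$-action around a fixed orbit telescopes to $(\gamma\rho)^r$ up to conjugacy. Everything else — that a permutation action contributes only via fixed points, and that the trace on a tensor product of a fixed block is the product of the permutation count with the block trace — is standard linear algebra.
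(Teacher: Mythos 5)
Your argument is correct in substance, but it takes a longer route than the paper, and the extra work you do is only needed for a different model of the module. The paper's proof is two lines: in $\C[\Gamma\backslash G]\otimes_\C E_{k,k}$ the $G$-action is diagonal and the $\rho$-action is ${}^\rho[g\Gamma,e]=[{}^\rho g\Gamma,{}^\rho e]$, so $\gamma\rho$ acts as a \emph{pure tensor} of operators $P\otimes M$, with $P$ the permutation operator on $\C[\Gamma\backslash G]$ and $M$ the operator $e\mapsto\gamma\cdot{}^\rho e$ on $E_{k,k}$ (the same $M$ on every coset, fixed or not); hence $\tr(P\otimes M)=\tr(P)\tr(M)$ is immediate, and Lemma \ref{lem_trreform} converts $\tr(M)=\tr(\gamma\rho\mid E_{k,k})$ into $\tr(x\mid E_k)$. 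Your block analysis, in which the $E$-component of a fixed coset is acted on by a coset-dependent return element $w_i={}^\rho(\gamma\gamma_i)\gamma_i^{-1}$, is really the computation for the isomorphic induced model $\C[G]\otimes_\Gamma E$ (or for $\mathrm{Coind}_\Gamma^G E$), where the operator is \emph{not} a pure tensor and one genuinely must check that $w_i\rho=\gamma_i^{-1}(\gamma\rho)\gamma_i$ in $\gt$, so $(w_i\rho)^r\sim x$ and every fixed block contributes the same $\tr(x\mid E_k)$. Since the three models are isomorphic $\gt$-modules, your computation yields the correct trace, so the proof stands; but note two small points: your claim that the fixed-block action on the $E_{k,k}$-factor is by $w_i$ does not describe the diagonal model you say you are using, and your expression $\tr(w_i\mid E_{k,k})$ should read $\tr(w_i\rho\mid E_{k,k})$, as Lemma \ref{lem_trreform} applies to the $\rho$-twisted operator, not to the group element $w_i$ alone. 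The payoff of the paper's choice of model is precisely that all of this bookkeeping evaporates.
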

\begin{proof} It is clear that 
$$\tr(\gamma \rho\mid \C[\Gamma \backslash G] \otimes_\C E_{k,k})=\tr(\gamma \rho\mid \C[\Gamma \backslash G]) \cdot 
\tr(\gamma \rho\mid  E_{k,k}).$$
By Lemma \ref{lem_trreform}, we have $\tr(\gamma \rho\mid E_{k,k}) = \tr(x \mid E_k)$. 
\end{proof}



\subsection{Torsion-free groups} \label{sec_ltffree}
When $\Gamma$ is torsion-free, one can give a geometric description of the Lefschetz trace formula. 

 Let $Y_\Gamma=\H/\Gamma$ and let $Y_{\Gamma}^{\rho}$ be the set of fixed points of the $\rho$-action on $Y_{\Gamma}$. Let  $\E^{\rho}$ denote the restriction of the sheaf $\E$ to $X_{\Gamma}^{\rho}$. Then  $\rho$ acts on the stalk of $\E^{\rho}$ and $L(\rho,Y_{\Gamma}^{\rho},\E^{\rho})$ is defined. We have the following geometric reformulation of the Lefschetz number.

\begin{proposition}  Assume that $\Gamma$ is torsion-free. Then 
$$L(\rho,\Gamma, E) = L(\rho,Y_{\Gamma}^{\rho},\E^{\rho}).$$
\end{proposition}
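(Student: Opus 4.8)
The plan is to deduce the identity from the topological Lefschetz fixed point theorem applied to the finite-order isometry $\rho$ acting on the manifold $Y_\Gamma$ with coefficients in the local system $\E$. Since $\Gamma$ is torsion-free, $\gt=\Gamma\rtimes\g$ acts properly discontinuously on the contractible space $\H$, so $Y_\Gamma=\H/\Gamma$ is a genuine aspherical manifold and $\rho$ acts on it as a diffeomorphism of finite order through the residual $\g$-action. Under the isomorphism $H^i(\Gamma,E)\cong H^i(Y_\Gamma,\E)$ recorded in the set-up, the map induced by the compatible pair (the action of $\rho$ on $\Gamma$ and on $E$) corresponds to the map induced on sheaf cohomology by the geometric action of $\rho$ on the pair $(Y_\Gamma,\E)$. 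I would verify this compatibility first, since it is exactly the functoriality of the $K(\Gamma,1)$-construction together with the definition of the $\rho$-action on the stalks of $\E$ via the chosen lift to $\H$. Granting it, the left-hand side is the topological Lefschetz number $\sum_i(-1)^i\tr(\rho\mid H^i(Y_\Gamma,\E))$ of $\rho$ on $(Y_\Gamma,\E)$.

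The heart of the matter is then a localization of this Lefschetz number to the fixed-point set. First I would choose a $\g$-equivariant cell (or simplicial) structure on $Y_\Gamma$, which exists because $\g$ is finite and acts smoothly; after passing to a barycentric subdivision we may assume the action is regular, so that $\rho$ fixes a cell setwise precisely when it fixes it pointwise, and the fixed cells form a cell structure on $Y_\Gamma^\rho$. Because $\rho$ is a cochain map, the Hopf trace formula gives
$$L(\rho,Y_\Gamma,\E)=\sum_i(-1)^i\tr\big(\rho\mid C^i(Y_\Gamma,\E)\big).$$
In this alternating sum every cell not fixed by $\rho$ is carried off the diagonal and hence contributes nothing to $\tr(\rho\mid C^i)$; the only contributions come from the cells of $Y_\Gamma^\rho$, on which $\rho$ is geometrically trivial but acts on the stalks of $\E^\rho$. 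Thus the sum equals $\sum_i(-1)^i\tr(\rho\mid C^i(Y_\Gamma^\rho,\E^\rho))$, and a second application of the Hopf trace formula, now on $Y_\Gamma^\rho$, rewrites it as $\sum_i(-1)^i\tr(\rho\mid H^i(Y_\Gamma^\rho,\E^\rho))=L(\rho,Y_\Gamma^\rho,\E^\rho)$, which is the desired identity.

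The main obstacle is the non-compactness of $Y_\Gamma$: the Hopf trace formula requires finite-dimensional cochain groups, whereas $\H/\Gamma$ carries infinitely many cells and its fixed geodesics may run out into the cusps. To handle this I would replace $Y_\Gamma$ by a $\g$-stable compact core, or by its Borel--Serre compactification, onto which the $\g$-action and the local system extend and which is $\g$-equivariantly homotopy equivalent to $Y_\Gamma$; this makes all cochain groups finite-dimensional without altering either side. The genuine care needed is to check that this equivariant compactification does not introduce extra contributing fixed strata on the added boundary tori, i.e.\ that the retraction restricts to an equivalence carrying the fixed set of the core to $Y_\Gamma^\rho$, so that the localized sum really computes $L(\rho,Y_\Gamma^\rho,\E^\rho)$ and not a Lefschetz number over a larger fixed set. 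The accompanying bookkeeping for the $\rho$-action on the stalks through the cochain complex is routine. Both points are standard in Rohlfs' geometric treatment of Lefschetz numbers, and I would cite \cite{rohlfs-78} for the precise form of this fixed-point reformulation.
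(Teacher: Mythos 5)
Your outline is the standard topological localization argument, and it is essentially the proof that underlies this proposition in the literature; the paper itself does not reprove the statement but simply cites Rohlfs and Schwermer \cite{rohlfs-schwermer}, p.~152, where exactly this equivariant-triangulation/Hopf-trace argument is carried out. So your route and the paper's source agree in substance. The one place where your sketch needs sharpening is the passage to the compactification. You propose to check ``that the retraction restricts to an equivalence carrying the fixed set of the core to $Y_\Gamma^\rho$,'' but that is not what happens: the fixed set of $\rho$ on the Borel--Serre compactification $X_\Gamma$ is genuinely larger than the closure of $Y_\Gamma^\rho$ in general, since the involutions of interest act on the boundary $2$-tori and fix circles there --- both the boundary circles of the compactified fixed surfaces and, possibly, further fixed circles in boundary components not met by any interior fixed surface. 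The correct resolution is not that these strata are absent but that they contribute nothing: each such extra component is a disjoint union of circles, so its Euler characteristic vanishes and its local contribution $\chi(\cdot)\cdot\tr(\rho\mid \E_x)$ to the Hopf trace is zero; likewise, compactifying an open fixed surface by adjoining boundary circles does not change its Euler characteristic, so the localized sum over $X_\Gamma^\rho$ really does equal $L(\rho,Y_\Gamma^\rho,\E^\rho)$. With that correction your argument closes up, and the remaining points (a regular equivariant triangulation after barycentric subdivision, so that setwise-fixed simplices are pointwise fixed and the non-fixed cells drop off the diagonal of the trace; the identification of the group-cohomological and sheaf-theoretic $\rho$-actions) are exactly as in \cite{rohlfs-schwermer}.
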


\begin{proof}
A proof is provided by Rohlfs and Schwermer in \cite{rohlfs-schwermer} page 152.
\end{proof}

 When $\Gamma$ is torsion-free, the connected components of $Y_{\Gamma}^{\rho}$ can be parametrized by the first non-abelian (Galois) cohomology $H^1(\mathfrak{g},\Gamma)$.   If $\gamma$ is a cocycle for $H^1(\mathfrak{g},\Gamma)$, we have a $\gamma$-twisted $\rho$-action on $\H$ given by $x \mapsto {}^{\rho}x \gamma^{-1}$. The fixed point set $\H(\gamma)$ of the $\gamma$-twisted action on $\H$ is non-empty and  its image in $Y_{\Gamma}$ is contained in $Y_{\Gamma}^{\rho}$. Let us denote this image by $F(\gamma)$. 

\begin{proposition}\label{prop_dec}
 Assume that $\Gamma$ is torsion free. The set of fixed points $Y_{\Gamma}^{\rho}$ is a finite disjoint union of its connected components $F(\gamma)$: $$Y_{\Gamma}^{\rho} = \bigcup_{\gamma \in H^1(\mathfrak{g},\Gamma)} F(\gamma).$$ The $F(\gamma)$'s are locally symmetric subspaces of $Y_{\Gamma}$.
\end{proposition}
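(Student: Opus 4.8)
The plan is to establish the decomposition of the fixed point set $Y_\Gamma^\rho$ into connected components indexed by the non-abelian cohomology $H^1(\mathfrak{g},\Gamma)$, and then to verify that each component $F(\gamma)$ is a locally symmetric subspace. I would proceed as follows.

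First I would set up the correspondence between fixed points and twisted conjugacy classes. A point of $Y_\Gamma^\rho$ is represented by some $x \in \H$ whose class $\Gamma x$ is fixed by $\rho$, which means ${}^\rho x = \gamma x$ for some $\gamma \in \Gamma$ (using that $\Gamma$ acts on the left, or equivalently $x \mapsto {}^\rho x \gamma^{-1}$ fixes $x$). The element $\gamma$ is determined by $x$ up to the relation coming from the choice of representative in the $\Gamma$-orbit: replacing $x$ by $\delta x$ for $\delta \in \Gamma$ replaces $\gamma$ by ${}^\rho\delta \cdot \gamma \cdot \delta^{-1}$. Since $\rho$ has order $r$, applying $\rho$ repeatedly to ${}^\rho x = \gamma x$ forces the cocycle condition, so $\gamma$ defines a class in $H^1(\mathfrak{g},\Gamma)$, and the assignment $\Gamma x \mapsto [\gamma]$ is well-defined on $Y_\Gamma^\rho$. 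Conversely, for each cocycle $\gamma$ the $\gamma$-twisted $\rho$-action $x \mapsto {}^\rho x \gamma^{-1}$ is an isometric involution (or finite-order map) of $\H$, so its fixed set $\H(\gamma)$ is a nonempty totally geodesic subspace, and its image $F(\gamma)$ in $Y_\Gamma$ lands inside $Y_\Gamma^\rho$; this uses the general fact that the fixed-point set of a finite group of isometries of a symmetric space (here $\H$) is a nonempty totally geodesic, hence locally symmetric, subspace.

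Next I would check that these components are disjoint and exhaust $Y_\Gamma^\rho$. Surjectivity is the construction above. For disjointness, I would show that two fixed points $\Gamma x$ and $\Gamma x'$ lie in the same $F(\gamma)$ precisely when their associated cohomology classes coincide: if ${}^\rho x = \gamma x$ and ${}^\rho x' = \gamma' x'$ with $[\gamma]=[\gamma']$, one adjusts $x'$ within its $\Gamma$-orbit so that $\gamma'=\gamma$ on the nose, placing $x'$ in $\H(\gamma)$ as well. Finiteness of the union follows from finiteness of $H^1(\mathfrak{g},\Gamma)$, which holds because $\Gamma$ is a finitely generated group acted on by the finite cyclic group $\mathfrak{g}$; alternatively it follows from the fact that $Y_\Gamma$ is a finite-volume (indeed compact fixed locus) manifold, so $Y_\Gamma^\rho$ has only finitely many components. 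Finally, the local symmetry of each $F(\gamma)$ is inherited from that of $\H(\gamma)$: since $\H(\gamma)$ is a totally geodesic submanifold of $\H$ and $F(\gamma) = \Gamma_\gamma \backslash \H(\gamma)$ for the stabilizer $\Gamma_\gamma$ acting properly discontinuously, the quotient is locally symmetric.

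The main obstacle I anticipate is the careful verification that the map from fixed points to $H^1(\mathfrak{g},\Gamma)$ is well-defined and a bijection onto the index set of \emph{connected} components, as opposed to merely a surjection onto components that might further subdivide or merge. Concretely, one must confirm that distinct cohomology classes yield genuinely disjoint images in $Y_\Gamma$ and that each $\H(\gamma)$ maps to a single connected component (not several), which amounts to showing $F(\gamma)$ is connected; this relies on $\H(\gamma)$ being connected (true because fixed sets of isometries of the contractible space $\H$ are connected totally geodesic subspaces) together with the properness of the $\Gamma_\gamma$-action. Since the torsion-free hypothesis guarantees $\Gamma_\gamma$ acts freely, $F(\gamma)$ is a genuine manifold quotient and the bookkeeping goes through cleanly; I would cite the Rohlfs--Schwermer framework already invoked in the preceding proposition to handle the standard parts.
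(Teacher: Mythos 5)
The paper gives no proof of this proposition; it is quoted as part of the standard Rohlfs machinery (cf.\ \cite{rohlfs-78} and the Rohlfs--Schwermer reference used for the preceding proposition). Your argument is exactly that standard one -- the assignment $\Gamma x\mapsto[\gamma]$ where ${}^{\rho}x=\gamma x$, the cocycle condition from $\rho^{r}=\mathrm{id}$, Cartan's fixed point theorem for the nonemptiness and total geodesy of $\H(\gamma)$, and the observation that torsion-freeness makes $\gamma$ unique given the representative $x$ (which is precisely what fails in the presence of torsion, as the paper notes right after the proposition) -- and in outline it is correct.

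The one step where your justification does not hold up is the finiteness of the union, i.e.\ the finiteness of $H^1(\mathfrak{g},\Gamma)$. Neither of the two reasons you offer is adequate. Finite generation of $\Gamma$ together with finiteness of $\mathfrak{g}$ does not formally imply finiteness of nonabelian $H^1$; for arithmetic groups this is a genuine theorem of Borel and Serre (finitude de la cohomologie galoisienne des groupes arithm\'etiques), and that is the result Rohlfs invokes. Your alternative geometric justification is also off: the fixed locus is \emph{not} compact -- the two-dimensional components $F(\gamma)$ here are finite-volume hyperbolic surfaces with cusps (e.g.\ $F(\gamma_1)\cong\Gamma_N\backslash\H^2$ in Section 3), so ``finitely many components because the fixed locus is compact'' is not available, and for a general finite-volume manifold a closed submanifold can a priori have infinitely many components. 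So you should either cite Borel--Serre at this point or give a genuine argument; everything else in your write-up is fine.
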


In the presence of torsion in $\Gamma$, the above is not true: the left hand side is larger.

\begin{proposition}{\cite[Lemma 2.4.2]{rohlfs-84}}
 For $\Gamma$ not necessarily torsion-free, the difference $$Y_\Gamma^\rho \quad\backslash \bigcup_{\gamma\in H^1(\mathfrak{g},\Gamma)} F(\gamma)$$ is a finite set  which contains singular points of $Y_\Gamma$.
\end{proposition}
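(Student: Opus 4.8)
The plan is to lift the fixed-point condition to $\H$, to measure by a ``norm'' the failure of a lift to descend from a genuine Galois cocycle, and to show that this failure forces $y$ to be a singular point, with only finitely many such points possible.

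First I would rewrite the fixed-point condition upstairs. Since $\rho$ acts on $Y_\Gamma$ through its isometric action on $\H$, a point $y$ lies in $Y_\Gamma^\rho$ if and only if a (equivalently, any) lift $x\in\H$ satisfies ${}^\rho x = x\cdot\gamma$ for some $\gamma\in\Gamma$. Replacing $x$ by $x\cdot\delta$ with $\delta\in\Gamma$ replaces $\gamma$ by $\delta^{-1}\gamma\,{}^\rho\delta$, which is precisely the coboundary relation; so $y$ determines the pair $(x,\gamma)$ up to this equivalence. The element $\gamma$ is a genuine $1$-cocycle for $\mathfrak{g}$ acting on $\Gamma$ exactly when its norm $N(\gamma):=\gamma\cdot{}^\rho\gamma\cdots{}^{\rho^{r-1}}\gamma$ is trivial, and in that case $y\in F([\gamma])$ by the construction preceding Proposition \ref{prop_dec}. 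Hence $\bigcup_\gamma F(\gamma)$ is exactly the locus of fixed points whose associated $\gamma$ is a cocycle.

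Next I would extract the obstruction. Iterating ${}^\rho x = x\cdot\gamma$ and using $\rho^r=1$ on $\H$ shows that $N(\gamma)$ fixes $x$, so $c:=N(\gamma)$ lies in the finite stabilizer of $x$ in $\Gamma$; a short computation shows that under a change of lift $c$ is replaced by a conjugate, so its class is an intrinsic invariant of $y$. Working modulo the center $\{\pm I\}$, which acts trivially on $\H$, the difference $Y_\Gamma^\rho\setminus\bigcup_\gamma F(\gamma)$ is exactly the set of fixed points for which the image $\bar c$ of $c$ in $\psl(\O)$ is nontrivial. For such a point $\bar c$ is a nontrivial elliptic element fixing $x$, so $x$ lies on the axis of an elliptic element and $y$ is a singular point of $Y_\Gamma$; this establishes the asserted inclusion in the singular locus.

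The main obstacle is finiteness, which is where I would concentrate the work, by showing that every such fixed point is isolated. Consider the twisted map $\theta:x\mapsto{}^\rho x\,\gamma^{-1}$, whose fixed-point set is the component in question and which satisfies $\theta^r:x\mapsto x\cdot c^{-1}$. Any fixed point of $\theta$ then lies in the fixed locus of the nontrivial elliptic element $c^{-1}$, which is a single geodesic axis $g$ preserved by $\theta$. In the case of interest $\rho$ has order $2$ and is orientation-reversing while $\gamma\in\sl(\O)$ is orientation-preserving, so $\theta$ is orientation-reversing with $\theta^2$ equal to multiplication by $c^{-1}\neq 1$; hence $\theta|_g$ is a nontrivial involution of the line $g$, and it cannot be the identity, since an orientation-reversing isometry fixing $g$ pointwise would be a reflection in a plane and satisfy $\theta^2=1$. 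Therefore $\theta|_g$ is a reflection of the line with a unique fixed point, the component is a single point, and its image in $Y_\Gamma$ is an isolated fixed point. Finally, since the singular set of $Y_\Gamma$ is a finite graph of geodesic arcs and the analysis produces at most one isolated fixed point on each arc, with all remaining fixed points lying on the $2$-dimensional pieces $F(\gamma)$, only finitely many such points occur. The two delicate points are the orientation bookkeeping that excludes a positive-dimensional contribution and the reduction of the count to the finitely many arcs of the singular set.
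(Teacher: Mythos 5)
The paper itself gives no proof of this proposition; it is quoted verbatim from Rohlfs \cite{rohlfs-84}, so your argument can only be measured against his. Your overall strategy is the standard and correct one: lift the fixed-point condition to ${}^{\rho}x = x\gamma$, observe that the norm $c=N(\gamma)$ lies in the finite stabilizer $\Gamma_x$, conclude that a point outside $\bigcup F(\gamma)$ forces a nontrivial elliptic element fixing the lift $x$ (hence $y$ is singular), and use that $\rho$ is an orientation-reversing involution to show that the twisted map $\theta$ restricted to the axis $g$ of $c$ is a point reflection rather than the identity, so each such fixed component is a single point. One imprecision in the first half: for a fixed lift $x$ the element $\gamma$ is determined only up to left translation by the finite group $\Gamma_x$, not merely up to the coboundary relation, so $c$ is \emph{not} an intrinsic invariant of $y$; the correct dichotomy is that $y\in\bigcup F(\gamma)$ iff \emph{some} admissible choice of $\gamma$ has trivial norm in $\psl(\O)$. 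This does not damage the conclusion that points of the difference are singular, but the phrase ``intrinsic invariant of $y$'' is not right.

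The genuine gap is the finiteness count. ``At most one isolated fixed point on each arc'' is neither justified nor true in general: if the axis $g$ of $c$ is also the axis of a hyperbolic element $h\in\Gamma$ (so the corresponding piece of the singular locus is a closed geodesic), then every $\gamma''$ in the infinite coset $\gamma\cdot\mathrm{Stab}_\Gamma(g)$ yields a twisted point reflection of $g$, and their fixed points project to finitely many --- but typically more than one --- points of that circle; if instead $g$ runs into a cusp, one must separately rule out leftover points accumulating at infinity. Both cases can be repaired (in the cusped case the setwise stabilizer of $g$ is finite because a parabolic and a loxodromic element of a discrete group cannot share a fixed point; in the compact case the reflection centers differ by half-periods of $h$ and so have finite image), and one also needs the standard but nontrivial fact that the singular locus has only finitely many edges. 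That supplementary analysis is precisely what your last sentence elides. Rohlfs's own proof avoids the case analysis entirely: he parametrizes the extra points by the generalized nonabelian cohomology sets (the $H(\rho,i)$ invoked in Section 3 of this paper) and deduces finiteness from the finiteness of those sets, which is established by reduction theory.
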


 There is also a $\gamma$-twisted $\rho$-action on $\Gamma$ given by  $g \mapsto \gamma~{}^{\rho}g \gamma^{-1} $ for $g \in \Gamma$. Let $\Gamma(\gamma)$ denote the set of fixed points of this action. When $\Gamma$ is torsion-free, the canonical map $$ \pi_{\gamma} : \Gamma(\gamma) \backslash \H(\gamma) \rightarrow Y_{\Gamma}$$ is injective. The image of $\pi_{\gamma}$ is homeomorphic to $F(\gamma).$ 
 
 There is a twisted $\rho$-action on $E$ as well, given by $e \mapsto  {}^\rho e \gamma$ for $e \in E$. The trace of this action on $E$ does not depend on the choice of the cocycle $\gamma$ in its class and therefore will be written as $\tr(\rho_{\gamma} \mid E)$. When $\Gamma$ is torsion-free,  the contractibility of $\H(\gamma)$ implies that $\chi(F(\gamma)) = \chi(\Gamma(\gamma))$. It follows that, see \cite{gmasz} p.26 for a proof, $$ L(\rho, F(\gamma), \E) = \chi(\Gamma(\gamma)) \cdot \tr(\rho_\gamma\mid E).$$ Hence, we get the following geometric reformulation of the Lefschetz trace formula for the torsion-free case.

\begin{theorem}[Rohlfs]\label{thm_lef}  Assume that $\Gamma$ is torsion-free. Then 
$$L(\rho,\Gamma,E) = \sum_{\gamma \in H^1(\mathfrak{g}, \Gamma)} \chi(F(\gamma)) \tr(\rho_\gamma\mid E).$$
\end{theorem}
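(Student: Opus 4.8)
The plan is to assemble the pieces established above into the stated formula. The starting point is the geometric reformulation of the Lefschetz number proved in the preceding proposition, namely
$$L(\rho,\Gamma,E) = L(\rho, Y_\Gamma^\rho, \E^\rho),$$
which reduces the alternating sum of traces on the full cohomology to the Lefschetz number of the $\rho$-action restricted to the fixed-point locus $Y_\Gamma^\rho$ with the restricted local system $\E^\rho$. This is the only analytic input; everything that follows is bookkeeping about how the fixed-point set and the induced sheaf action decompose.

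First I would invoke Proposition \ref{prop_dec}, which exhibits $Y_\Gamma^\rho$ as a finite disjoint union of its connected components $F(\gamma)$ indexed by the cocycle classes $\gamma \in H^1(\mathfrak{g}, \Gamma)$. Since $\rho$ has finite order, $Y_\Gamma^\rho$ is a closed subspace and each component is closed; because $\rho$ fixes $Y_\Gamma^\rho$ pointwise, it preserves every $F(\gamma)$ and acts only on the stalks of $\E^\rho$. The cohomology of $Y_\Gamma^\rho$ therefore splits as the direct sum of the cohomologies of the $F(\gamma)$, compatibly with the induced $\rho$-action, so the Lefschetz number is additive over the components:
$$L(\rho, Y_\Gamma^\rho, \E^\rho) = \sum_{\gamma \in H^1(\mathfrak{g}, \Gamma)} L(\rho, F(\gamma), \E).$$

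Next I would compute each term. Using the homeomorphism $F(\gamma) \cong \Gamma(\gamma)\backslash \H(\gamma)$ induced by the injective map $\pi_\gamma$, together with the contractibility of the twisted fixed-point space $\H(\gamma)$, each $F(\gamma)$ is an Eilenberg--MacLane space for $\Gamma(\gamma)$. Consequently the cohomology of $F(\gamma)$ with coefficients in the restricted local system computes $H^*(\Gamma(\gamma), E)$ with the twisted $\rho$-action, and the corresponding Lefschetz number collapses to
$$L(\rho, F(\gamma), \E) = \chi(\Gamma(\gamma)) \cdot \tr(\rho_\gamma \mid E),$$
which is the identity recorded above and carried out in \cite{gmasz}. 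Combining this with $\chi(F(\gamma)) = \chi(\Gamma(\gamma))$, which again follows from the contractibility of $\H(\gamma)$, and summing over $\gamma$ yields the asserted formula.

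The step I expect to be the main obstacle is the compatibility check implicit in the local computation: one must verify that the $\rho$-action on the stalk of $\E^\rho$ over a point of $F(\gamma)$ is intertwined with precisely the twisted action $e \mapsto {}^\rho e\, \gamma$ on $E$, so that the stalkwise trace equals $\tr(\rho_\gamma \mid E)$ rather than the untwisted trace. This is exactly where the cocycle $\gamma$ enters, and tracking it correctly through the inclusion $\H(\gamma) \hookrightarrow \H$ and the quotient by $\Gamma(\gamma)$ is the delicate point; the remaining ingredients, additivity of Euler characteristics and of Lefschetz numbers over the finitely many connected components, are routine.
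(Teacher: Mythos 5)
Your proposal is correct and follows essentially the same route the paper takes: it assembles the geometric reformulation $L(\rho,\Gamma,E)=L(\rho,Y_\Gamma^\rho,\E^\rho)$, the decomposition of $Y_\Gamma^\rho$ into the components $F(\gamma)$ from Proposition \ref{prop_dec}, additivity of the Lefschetz number over components, and the per-component identity $L(\rho,F(\gamma),\E)=\chi(\Gamma(\gamma))\tr(\rho_\gamma\mid E)$ together with $\chi(F(\gamma))=\chi(\Gamma(\gamma))$. The stalk-compatibility point you flag is exactly the content the paper delegates to the cited reference, so nothing is missing.
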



\subsection{Lower bounds for the cohomology via Lefschetz numbers}\label{subsec_boundsviaLefschetz}
 
 For the rest of the section, assume that $\rho$ is {\em orientation-reversing}, as it will be the case with the specific involutions that we will work with in Section 3.  
In this section, we want to give a lower bound for the dimension of the cuspidal cohomology in terms of the Lefschetz number of $\rho$. 

 Let $X_\Gamma$ denote the Borel-Serre compactification of $Y_\Gamma$. This is a compact manifold with boundary whose interior is homeomorphic to $Y_{\Gamma}$. Moreover, the embedding $Y_{\Gamma} \hookrightarrow X_{\Gamma}$ is homotopy equivariant, giving an isomorphism $$H^i(Y_{\Gamma}, \E) \simeq H^i(X_{\Gamma}, \bar{\E})$$ where $\bar{\E}$ is a certain sheaf on $X_{\Gamma}$ that extends $\E$.

Consider the long exact sequence$$ \hdots \rightarrow H^{i-1}_{c}(X_{\Gamma},\bar{\mathcal{E}}_{n}) \rightarrow H^{i}(X_{\Gamma},\bar{\mathcal{E}}_{n}) \rightarrow H^{i}(\partial X_{\Gamma},\bar{\mathcal{E}}_{n}) \rightarrow \hdots $$ here $H^i_c$ denotes the compactly supported cohomology.

The \textit{cuspidal cohomology} $H^i_{cusp}$ is defined as the image of the compactly supported cohomology. The \textit{Eisenstein cohomology} $H^i_{Eis}$ is the complement of the cuspidal cohomology inside $H^i$ and it is isomorphic to the image of the restriction map inside the cohomology of the boundary. Assume that the action of $\rho$ on $Y_{\Gamma}$ extends to $X_{\Gamma}$, which will be the case for our 
specific involutions of Section 3.  This induces involutions on the terms of the above long exact sequence. We therefore have, in the 
obvious notation, that $$\tr(\rho^i) = \tr(\rho^i_{cusp}) + \tr(\rho^i_{Eis}).$$

Poincar\'e duality implies that $H^1_{cusp} \simeq H^2_{cusp}$. Since $\rho$ is an orientation reversing involution, it follows that $\tr(\rho^1_{cusp})=-\tr(\rho^2_{cusp})$. Hence we get 
$$L(\rho,\Gamma,E) = \tr(\rho^0)-2 \tr(\rho^1_{cusp}) - \tr(\rho^1_{Eis}) + \tr(\rho^2_{Eis}), $$ and this implies the following proposition.

\begin{proposition}\label{lefschetzlowerbound}
With the above notation, we have
$${\rm dim} H^1_{cusp}(\Gamma, E) \geq \dfrac{1}{2}\bigg | L(\rho,\Gamma,E) +\tr(\rho^1_{Eis}) - \tr(\rho^2_{Eis})-\tr(\rho^0) \bigg |.$$
\end{proposition}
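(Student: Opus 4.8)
The plan is to read off the bound directly from the displayed identity
$$L(\rho,\Gamma,E) = \tr(\rho^0)-2\tr(\rho^1_{cusp}) - \tr(\rho^1_{Eis}) + \tr(\rho^2_{Eis})$$
that was just established, and then estimate $\tr(\rho^1_{cusp})$ in absolute value by the dimension of the space on which it acts. First I would solve this identity for the cuspidal term, obtaining
$$2\tr(\rho^1_{cusp}) = \tr(\rho^0) - \tr(\rho^1_{Eis}) + \tr(\rho^2_{Eis}) - L(\rho,\Gamma,E),$$
so that
$$\tr(\rho^1_{cusp}) = \frac{1}{2}\Big(\tr(\rho^0) - \tr(\rho^1_{Eis}) + \tr(\rho^2_{Eis}) - L(\rho,\Gamma,E)\Big).$$
Taking absolute values of both sides gives the right-hand side of the proposition up to sign, since rearranging the terms inside the absolute value does not change its value.

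The second and essential step is the general trace-versus-dimension inequality. Because $\rho$ is an involution, the operator $\rho^1_{cusp}$ acting on the finite-dimensional space $H^1_{cusp}(\Gamma,E)$ has all eigenvalues equal to $\pm 1$; writing $d_+$ and $d_-$ for the dimensions of the $+1$ and $-1$ eigenspaces, we have $\tr(\rho^1_{cusp}) = d_+ - d_-$ while $\dim H^1_{cusp}(\Gamma,E) = d_+ + d_-$. Hence
$$\big|\tr(\rho^1_{cusp})\big| = |d_+ - d_-| \leq d_+ + d_- = \dim H^1_{cusp}(\Gamma,E).$$
Combining this with the expression for $\tr(\rho^1_{cusp})$ from the first step yields
$$\dim H^1_{cusp}(\Gamma,E) \geq \big|\tr(\rho^1_{cusp})\big| = \frac{1}{2}\Big| \tr(\rho^0) - \tr(\rho^1_{Eis}) + \tr(\rho^2_{Eis}) - L(\rho,\Gamma,E)\Big|,$$
which is exactly the claimed inequality after reordering the four terms inside the absolute value and noting $|-x|=|x|$.

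I should note that the honest content of this proposition is almost entirely front-loaded into the discussion preceding the statement: the decomposition $\tr(\rho^i) = \tr(\rho^i_{cusp}) + \tr(\rho^i_{Eis})$ relies on $\rho$ extending to the Borel-Serre compactification $X_\Gamma$ and acting compatibly on the long exact sequence, while the crucial collapse $\tr(\rho^1_{cusp}) = -\tr(\rho^2_{cusp})$ uses Poincar\'e duality $H^1_{cusp}\simeq H^2_{cusp}$ together with the hypothesis that $\rho$ is \emph{orientation-reversing}. Given all of that, the remaining argument is the elementary linear-algebra bound on the trace of an involution. Consequently there is no real obstacle in the proof itself; the only thing to be careful about is bookkeeping of signs inside the absolute value and making sure the orientation-reversing hypothesis (standing in force for the whole subsection) is what licenses the sign flip in the cuspidal traces. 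The proof is therefore short, and I would present it in essentially the three moves above: rearrange the Lefschetz identity, apply the involution eigenvalue inequality, and read off the result.
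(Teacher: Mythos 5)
Your proof is correct and follows exactly the paper's argument: rearrange the displayed Lefschetz identity to isolate $\tr(\rho^1_{cusp})$, then bound $|\tr(\rho^1_{cusp})|$ by $\dim H^1_{cusp}(\Gamma,E)$ using the fact that an involution has eigenvalues $\pm 1$. Your write-up is more detailed than the paper's two-line proof (and correctly attributes the real content to the preceding discussion of Poincar\'e duality and the orientation-reversing hypothesis), but the route is the same.
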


\begin{proof}
Since $\rho$ is an involution, the eigenvalues of $\rho^1_{cusp}$ are $\pm 1$, and so $${\rm dim}H^1(\Gamma,E)\geq  |\tr(\rho^1_{cusp})|.$$ The result now follows from the identity above.
\end{proof}

Note that when $E=E_{k,k}$ with $k>0$, $\tr(\rho^0)=0$ as $E$ is an irreducible $\Gamma$-representation.


\section{Lefschetz numbers for specific involutions} \label{section_LNSI}

 Let $\sigma$ be the complex conjugation. Its action on $\H$ is defined by $ (z,r)\mapsto (\bar{z},r)$. It also acts on the $\sl (\C)$ by acting on the entries of a matrix in the obvious way. If $M\in\sl (\C)$, the we write ${}^\sigma M$ or simply $\bar{M}$ for the image of $M$ under the action of $\sigma$. 

 Below, we will also consider the \emph{twisted complex conjugation}, which will be denoted by $\tau$. It acts on $\H$ via $(z,r)\to (-\bar{z},r)$ where $\bar{z}$ denotes the complex conjugate of $z$.  Its action on $\sl (\C)$ is defined as follows $$(\begin{smallmatrix} a & b \\ c & d \end{smallmatrix}) \mapsto (\begin{smallmatrix} \bar{a} & -\bar{b} \\ -\bar{c} & \bar{d} \end{smallmatrix}) $$ where the bar in the notation denotes the complex conjugation. It is convenient to regard $\tau$ as the composition $\alpha \circ \sigma = \sigma \circ \alpha$ where $\alpha(\begin{smallmatrix} a&b \\ c&d \end{smallmatrix} ) = (\begin{smallmatrix} a& -b \\ -c&d \end{smallmatrix})=
\beta (\begin{smallmatrix} a&b \\ c&d \end{smallmatrix}) \beta$ where $\beta :=( \begin{smallmatrix} -1& 0 \\ 0&1 \end{smallmatrix})$,  for 
every $(\begin{smallmatrix} a&b \\ c&d \end{smallmatrix}) \in \sl(\C)$ and $\alpha (z,r) = (-z,r)$ for every $(z,r) \in \mathbb{H}$. 

Both $\sigma$ and $\tau$ are orientation-reversing and they can be extended to the Borel-Serre compactification naturally (see \cite{rohlfs-84} Section 1.4).  The action of $\sigma$ on $E_{k,k}$ can be described as follows: $\sigma(P \otimes Q)= Q \otimes P$. Similarly, we have $\tau(P \otimes Q)=
(\begin{smallmatrix} -1 & 0 \\ 0&1 \end{smallmatrix}) Q \otimes (\begin{smallmatrix} -1 & 0 \\ 0&1 \end{smallmatrix}) P$. 
These actions are compatible with those on $\sl(\C)$.
 
 In this section, we discuss the Lefschetz numbers for these two involutions. We will use the symbol $\rho$ when we want 
to state results which are true for both of them.


\subsection{Lefschetz numbers of $\sigma$ for principal congruence subgroups}

Let $\Gamma =\Gamma(N) \subseteq \sl (\O)$ be a principal congruence subgroup of level $(N) \triangleleft \O$.  Denote its image in $\psl (\O)$ by $\bar{\Gamma} $.  Then for $N>2$, $\bar{\Gamma}$ is torsion-free.  Let $Y_\Gamma=\bar{\Gamma}\backslash \H$. 
 
 In this section, we will use Theorem \ref{thm_lef} to calculate the Lefschetz numbers $L(\sigma,\Gamma(N),E_{k,k})$.  In order to do this, we need to understand the ``decomposition" of the fixed point set $Y_\Gamma^\sigma$, and this is  done by Rohlfs in Section 4.1. of \cite{rohlfs-78}. 
 
  Let $\gamma_1=(\begin{smallmatrix} 1 & 0 \\ 0 & 1\end{smallmatrix})$, $\gamma'_1=(\begin{smallmatrix} 1 & \sqrt{d} \\ 0 & 1\end{smallmatrix})$ and $\gamma_2=(\begin{smallmatrix} 0 & -1 \\ 1& 0\end{smallmatrix})$. Let $\gamma_2'$ be $(\begin{smallmatrix} 1 +\sqrt{d} & (2-d)/2 \\ -2 & -1 + \sqrt{d}\end{smallmatrix})$ if $d\equiv 2 \mod 4$, and  $(\begin{smallmatrix} \sqrt{d} & (d-1)/2 \\ 2 & -1 +\sqrt{d}\end{smallmatrix})$ if $d\equiv 1 \mod 4$. Notice that $\gamma_1,\gamma'_1 \in H(\sigma,1)$ and $\gamma_2,\gamma'_2 \in H(\sigma,2)$, where $H(\sigma,i)$, for $i=1,2$, is the generalized Galois cohomology set as defined above.

 Note also that, since $\bar{\Gamma}$ is torsion-free, $H(i)=\emptyset$ for $i>2$. As described in Section \ref{sec_ltffree},  the fixed point set $Y_\Gamma^\sigma$ is a union of surfaces and points parametrized by the cohomology classes in $H^1(\sigma,\bar{\Gamma})$. If $\Gamma$ does not contain $-1$, then we can identify $H^1(\sigma,\bar{\Gamma})$ with $H(1)$, and if $\Gamma$ contains $-1$, then we can identify it with $H(1)\cup H(2)$.  
 
  The locally symmetric space $F(\gamma)$, defined in section \ref{sec_ltffree}, is a surface if $\gamma\in H(1)$ and is a point if $\gamma\in H(2)$. In \cite{rohlfs-78}, Rohlfs gives the number of translations of the surfaces corresponding to $\gamma_1,\gamma'_1$, and the number of translations of the points corresponding to $\gamma_2,\gamma'_2$. 
  
  \begin{theorem}[Rohlfs, Theorem 4.1. of \cite{rohlfs-78}]\label{thm_table}
 Let $D$ be the discriminant of $K/\Q$ and $t$ be the number of distinct prime divisors of $D$.  Let $(N)=\prod_{p|D} \p_p^{j_p} \prod_{p\nmid D} (p)^{j_p}$ be an ideal with $N>2$, and let $\Gamma=\Gamma(N)$ be the principal  congruence subgroup of  level $(N)$.  Let $s=|\{p\mid p|D, p\neq 2 \text{ and } j_{p}\neq 0\}|$. 

Then $Y_\Gamma$ consists of only the translations of surfaces $F(\gamma_1)$ and $F(\gamma'_1)$ and the number of translations of these surfaces are denoted by $A$ and $B$ respectively in the table below.

\begin{center}   
{\large \begin{tabular}{ | l  | l | c | r | }
\hline
 $d$ &  $j_2$ & $A$  &  $B$ \\ \hline
 
 $d\equiv 1 (4)$ & $\geq 0$ & $2^{t-s}$ & $0$\\ \hline
 
$d\equiv 2 (4)$ & $0$ & $2^{t-s}$ & $2^{t-s-1}$\\     

                            & $1$ & $2^{t-s}$ & $2^{t-s-1}$ \\
                            
                            & $2$ & $8\cdot 2^{t-s}$ & $0$\\
                            
                            & $\geq 3$ & $8\cdot 2^{t-s-1}$ & $0$\\ \hline
                            
$d\equiv 3 (4)$ & $0$ & $2^{t-s}$ & $2^{t-s-1}$\\     

                            & $1$ & $2^{t-s}$ & $0$ \\
                            
                            & $2$ & $8\cdot 2^{t-s}$ & $0$\\
                            
                            & $j_2=2n+1\geq 3$ & $2^{t-s-1}$ & $0$\\ 
                            
                            & $j_2=2n\geq 4$ & $8\cdot 2^{t-s-1}$ & $0$\\  \hline

\end{tabular}}
\end{center}

 \end{theorem}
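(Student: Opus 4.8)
The plan is to parametrize the connected components of the fixed-point set $Y_\Gamma^\sigma$ by non-abelian Galois cohomology and then to count, type by type, how many occur. By Proposition \ref{prop_dec}, applied to the torsion-free group $\bar\Gamma=\bar\Gamma(N)$ (the hypothesis $N>2$ guarantees both torsion-freeness and that $-I\notin\Gamma(N)$), the components are indexed by $H^1(\sigma,\bar\Gamma)$, each cocycle $\gamma$ giving a totally geodesic locally symmetric component $F(\gamma)$. First I would record the dichotomy governing the dimension of $F(\gamma)$: a cocycle satisfies $\gamma\,{}^{\sigma}\gamma=\pm I$ in $\sl(\O)$, and the fixed locus of the $\gamma$-twisted involution $(z,r)\mapsto{}^{\sigma}(z,r)\cdot\gamma^{-1}$ on $\H$ is $2$-dimensional (a surface) when the sign is $+$ and a single point when the sign is $-$. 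These two possibilities are exactly the sets $H(1)$ and $H(2)$. Since $-I\notin\Gamma(N)$ for $N>2$, the image $\bar\Gamma$ only sees the sign $+$, so $H^1(\sigma,\bar\Gamma)=H(1)$ and every component is a surface; this already yields the assertion that only translates of $F(\gamma_1)$ and $F(\gamma_1')$ occur, with nothing to count of type $\gamma_2,\gamma_2'$.

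Second, I would pin down the \emph{global} cohomology $H^1(\sigma,\psl(\O))$ (equivalently $H^1(\sigma,\sl(\O))$ up to the central sign), exhibiting $\gamma_1=I$ and $\gamma_1'$ as representatives of its type-$1$ classes. This is a descent computation: a class with $\gamma\,{}^{\sigma}\gamma=I$ is a $\sigma$-conjugacy datum whose invariants are an ideal class (this is where the class number enters) together with local invariants at the ramified primes, and the two representatives $\gamma_1,\gamma_1'$ distinguish whether the associated binary form splits over $\O$ or only after adjoining $\sqrt d$.

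Third --- and this is the heart of the count --- I would compute, for each global class $[\gamma_i]$, the number of $\bar\Gamma(N)$-translation classes lying over it, that is, the fiber of the reduction map
$$H^1(\sigma,\bar\Gamma(N))\longrightarrow H^1(\sigma,\psl(\O)).$$
Using the $\sigma$-equivariant exact sequence $1\to\bar\Gamma(N)\to\psl(\O)\to\psl(\O/(N))\to 1$ and the twisting formalism for non-abelian $H^1$, this fiber is identified with an orbit set for the $\gamma_i$-twisted $\sigma$-action on the finite group $\psl(\O/(N))$. Since $\O/(N)$ decomposes as a product of local factors, one for each rational prime $p\mid N$, the count factors as a product of purely local contributions, with separate inputs according as $p$ splits, is inert, or ramifies in $K/\Q$.

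Finally, I would evaluate these local factors. At a prime unramified in $K/\Q$ the reduction map on $\gamma_i$-twisted $\sigma$-fixed points is surjective by smoothness and Hensel's lemma, so the local fiber is a single class and contributes trivially; this is precisely why $A$ and $B$ are independent of the unramified part of $N$, the growth of each component being absorbed into the Euler characteristic factor of Proposition \ref{thm_lefprincipal}. The nontrivial contributions come from the ramified primes, where a local-norm / Hilbert-symbol computation produces the powers of $2$ recorded as $2^{t-s}$ and $2^{t-s-1}$ together with the stray factors of $8$. The main obstacle is the $2$-adic analysis: the structure of $\sl(\O/2^{j})$, the reduction of $\sqrt d$, and whether $2$ ramifies all depend delicately on $d\bmod 4$, and the factor $8$ together with the parity split $j_2=2n$ versus $j_2=2n+1$ reflect exactly these subtleties. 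I expect the entire casework of the table to emerge from tracking, prime by prime, when the twisted class $[\gamma_1']$ fails to lift (forcing $B=0$) and how many lifts of $[\gamma_1]$ survive.
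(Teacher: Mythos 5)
The paper offers no proof of this statement: it is imported verbatim from Rohlfs (Theorem 4.1 of \cite{rohlfs-78}), so there is no in-paper argument to measure you against. Your strategy is nonetheless recognizably the right one --- it is Rohlfs's own method, and it is exactly the machinery the paper assembles in Section \ref{section_LFPT}: parametrize components of $Y_\Gamma^\sigma$ by $H^1(\sigma,\bar\Gamma)$, use the dichotomy $\gamma\,{}^\sigma\gamma=\pm I$ to distinguish surface classes from point classes, note that $N>2$ forces $-I\notin\Gamma(N)$ so only surfaces occur, and reduce the count to local data via the $\sigma$-equivariant sequence $1\to\bar\Gamma(N)\to\psl(\O)\to\psl(\O/(N))\to 1$. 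Those structural claims are all correct, and your observation that the unramified part of $N$ contributes trivially is consistent with the table (it is why $A$ and $B$ see only $t$, $s$ and $j_2$).

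The gap is that the theorem's actual content is never derived. Everything quantitative is deferred: the determination of $H^1(\sigma,\sl(\O))$ via genus theory, the local norm/Hilbert-symbol computations at the odd ramified primes that produce $2^{t-s}$ versus $2^{t-s-1}$, and above all the $2$-adic analysis responsible for the factors of $8$ and the parity split $j_2=2n$ versus $j_2=2n+1$ --- your closing sentence (``I expect the entire casework of the table to emerge \dots'') concedes that the table itself is not proved. There is also a counting imprecision you would hit immediately upon executing the plan: $A$ and $B$ cannot be the single fibers of $H^1(\sigma,\bar\Gamma(N))\to H^1(\sigma,\psl(\O))$ over $[\gamma_1]$ and $[\gamma_1']$, since for $N$ prime to $D$ all your local fibers are singletons yet the table gives $A=2^{t-s}$ with $s=0$, i.e.\ $2^t$. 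The sets $H(\sigma,1)$ already contain many classes at the level of $\psl(\O)$, sorted into ``type $\gamma_1$'' and ``type $\gamma_1'$'' by a coarser invariant (the class in $H^1(\sigma,\sl_2(K))$, equivalently the genus of the associated form, which governs whether $\Gamma^{\gamma\sigma}$ is conjugate to $\Gamma_N$ or to an index-two subgroup as in the proof of Proposition \ref{prop_lefprin}); $A$ and $B$ are the unions of fibers over all classes of each type. Repairing this and carrying out the ramified-prime and $2$-adic casework is not a routine finish --- it is the theorem.
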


 Now, using Theorem \ref{thm_lef} and Theorem \ref{thm_table}, we want to calculate the Lefschetz number for $\Gamma(N)$ for the above case.
 
 \begin{proposition}\label{prop_lefprin}
 Let $\Gamma(N),A,B$ be as in the above theorem. Then 
$$L(\sigma,\Gamma(N),E_{k,k})=(A + 2B)   \dfrac{-N^3}{12} \prod_{p|N} (1-p^{-2}) \cdot (k+1).$$
 \end{proposition}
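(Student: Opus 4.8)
The plan is to combine the geometric Lefschetz formula of Theorem~\ref{thm_lef} with the enumeration of fixed components in Theorem~\ref{thm_table}, so that the computation reduces to a twisted trace and an Euler characteristic for each of the two surface types. Since $N>2$ forces $\bar\Gamma$ to be torsion-free, Theorem~\ref{thm_lef} applies and expresses $L(\sigma,\Gamma(N),E_{k,k})$ as $\sum_{\gamma}\chi(F(\gamma))\,\tr(\sigma_\gamma\mid E_{k,k})$, the sum running over $H^1(\sigma,\bar\Gamma)$. By Theorem~\ref{thm_table} the only classes contributing when $N>2$ are the $A$ translates of $F(\gamma_1)$ and the $B$ translates of $F(\gamma_1')$; translates of a fixed type are mutually conjugate, hence have a common Euler characteristic and a common twisted trace. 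The sum therefore collapses to
$$L(\sigma,\Gamma(N),E_{k,k})=A\,\chi(F(\gamma_1))\,\tr(\sigma_{\gamma_1}\mid E_{k,k})+B\,\chi(F(\gamma_1'))\,\tr(\sigma_{\gamma_1'}\mid E_{k,k}).$$

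I would first dispose of the traces. By Lemma~\ref{lem_trreform} applied with $r=2$, we have $\tr(\sigma_\gamma\mid E_{k,k})=\tr(x\mid E_k)$ with $x=(\gamma\sigma)^2=\gamma\,{}^\sigma\gamma=\gamma\bar\gamma$. For $\gamma_1=(\begin{smallmatrix}1&0\\0&1\end{smallmatrix})$ this is the identity, and for $\gamma_1'=(\begin{smallmatrix}1&\sqrt d\\0&1\end{smallmatrix})$ one computes $\gamma_1'\,\overline{\gamma_1'}=(\begin{smallmatrix}1&\sqrt d\\0&1\end{smallmatrix})(\begin{smallmatrix}1&-\sqrt d\\0&1\end{smallmatrix})=(\begin{smallmatrix}1&0\\0&1\end{smallmatrix})$, using $\overline{\sqrt d}=-\sqrt d$. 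Hence both twisted traces equal $\dim_\C E_k=k+1$, which already accounts for the factor $(k+1)$ and shows that the whole Lefschetz number is linear in the weight.

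The substantive step is the computation of the two Euler characteristics, and in particular the factor $2$ attached to $B$. Here I would use $\chi(F(\gamma))=\chi(\Gamma(\gamma))$ from Section~\ref{sec_ltffree}, where $\Gamma(\gamma)=\{g\in\bar\Gamma:\gamma\bar g\gamma^{-1}=g\}$ is the twisted fixed-point group. For $\gamma_1=1$ this is $\{g\in\bar\Gamma:\bar g=g\}$, i.e.\ the principal congruence subgroup of level $N$ inside $\psl(\Z)$; since $-1\notin\Gamma(N)$ for $N>2$, its index in $\psl(\Z)$ is $\tfrac12 N^3\prod_{p|N}(1-p^{-2})$, and together with $\chi(\psl(\Z))=-\tfrac16$ this gives $\chi(F(\gamma_1))=-\tfrac{N^3}{12}\prod_{p|N}(1-p^{-2})$, exactly the asserted factor. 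For $\gamma_1'$ the group $\Gamma(\gamma_1')$ is a different arithmetic Fuchsian group stabilizing the translated geodesic plane $\{\im z=-\tfrac12\sqrt{|d|}\}$; analyzing its defining congruences (whose off-diagonal entries now range over a half-integral translate of the lattice, which is exactly why $B=0$ when $\sqrt d\notin\O$, i.e.\ $d\equiv1\bmod 4$) should show that its covolume, equivalently $|\chi(\Gamma(\gamma_1'))|$, is twice that of $\Gamma(\gamma_1)$. Substituting $\chi(F(\gamma_1'))=2\,\chi(F(\gamma_1))$ and the two traces into the collapsed sum then produces the stated formula.

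I expect the computation of $\chi(F(\gamma_1'))$, that is, pinning down the factor $2$, to be the main obstacle, since it is the only point where the specific arithmetic of $\O$ genuinely enters. The cleanest route seems to be to conjugate $\Gamma(\gamma_1')$ into $\sl(\R)$ by a suitable half-translation and compare the resulting lattice with the congruence subgroup governing $\Gamma(\gamma_1)$, reading off the index-$2$ discrepancy directly; everything else is bookkeeping once Theorems~\ref{thm_lef} and~\ref{thm_table} and Lemma~\ref{lem_trreform} are in place.
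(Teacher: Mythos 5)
Your proposal is correct and follows essentially the same route as the paper: apply Theorem~\ref{thm_lef}, observe via Lemma~\ref{lem_trreform} that both twisted traces equal $k+1$ because $\gamma_1\bar{\gamma}_1=\gamma_1'\overline{\gamma_1'}=1$, identify $\Gamma(\gamma_1)$ with the level-$N$ principal congruence subgroup of $\sl(\Z)$ so that $\chi(F(\gamma_1))=-\tfrac{N^3}{12}\prod_{p\mid N}(1-p^{-2})$, and reduce everything to showing $\chi(F(\gamma_1'))=2\chi(F(\gamma_1))$. The one step you leave as a plan, pinning down that factor $2$, is carried out in the paper exactly along the lines you propose: conjugating by $h=\left(\begin{smallmatrix}2&\sqrt d\\0&1\end{smallmatrix}\right)$ identifies $\Gamma(\gamma_1')$ with $\Gamma_N\cap\Gamma_0(2N)$ when $d\equiv 2\ (4)$ and with $\Gamma_N\cap\{\left(\begin{smallmatrix}x&y\\z&t\end{smallmatrix}\right)\in\sl(\Z)\mid x\equiv t\ (2)\}$ when $d\equiv 3\ (4)$, each of index $2$ in $\Gamma_N$.
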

 
 \begin{proof}
 For each $\gamma\in H(1)$, by Lemma \ref{lem_trreform}, $\tr(\gamma\sigma \mid E_{k,k})=\tr(1 \mid E_{k,k})=(k+1)$. Therefore, by Theorem \ref{thm_lef}, we just need to calculate the Euler-Poincare characteristics $\chi(\Gamma^{\gamma\sigma})$ for $\gamma_1$ and $\gamma'_1$.
 
  An easy calculation shows that $\Gamma^{\gamma_1\sigma}=\Gamma_N$, the principal congruence subgroup of $\sl (\Z)$ of level $N$. Let $Y_N$ denote the surface associated to $\Gamma_N$. It is well-known that $Y_N$ has 
$\frac{1}{2}N^2\prod_{p|N} (1-p^{-2})$ cusps. If $X_N$ denotes 
the compact surface obtained from $Y_N$ by adding the cusps, then by \cite[1.6.4]{shimura}, we have $\chi(X_N)=(-1/12)N^2(N-6)\prod_{p|N} (1-p^{-2})$. 
Therefore $\chi(\Gamma_N)=\chi(Y_N)=\chi(X_N)- \#\{ \textrm{cusps of }Y_N\}=(-1/12)N^3 \prod_{p|N} (1-p^{-2})$. 
  
  Let $h=(\begin{smallmatrix} 2 & \sqrt{d} \\ 0 & 1 \end{smallmatrix}$). Let $\Gamma_0(2N)$ denote the $\Gamma_0$-type congruence subgroup of $\sl(\Z)$ level $2N$. And, let $\Gamma_2:=\{(\begin{smallmatrix} x & y \\ z & t \end{smallmatrix}) \in \sl(\Z)\mid  x\equiv t \ (2)\}$. One can check that $h\Gamma^{\gamma'_1\sigma}h^{-1}=\Gamma_N\cap \Gamma_0(2N)$ if $d\equiv 2 (4)$, and $h\Gamma^{\gamma'_1\sigma}h^{-1}=\Gamma_N\cap \Gamma_2$ if $d\equiv 3 \ (4)$. Each of these groups has index $2$ in $\Gamma_N$. Therefore, $\chi(\Gamma^{\gamma'_1\sigma})=2\chi(\Gamma_N)$. Using the formula $\chi(\Gamma_N)$, we get the formula above.
  \end{proof}

\begin{corollary} \label{cor_lefprin}
 Let $p$ be an odd rational prime that is unramified over $K$. Then, for $n>0$ we have
$$L(\sigma,\Gamma(p^n),E_{k,k})=\begin{cases} 
-2^{t-1} \cdot \dfrac{p^{3n}-p^{3n-2}}{12} \cdot (k+1) 
\ \ \ \ \ \textrm{if} \ \ d \equiv 1 \mod 4  \\  \\
-2^{t} \cdot \dfrac{p^{3n}-p^{3n-2}}{12} \cdot (k+1)
\ \ \ \ \ \ \textrm{else.}\end{cases}$$
\end{corollary}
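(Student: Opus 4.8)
The plan is to obtain the corollary as a direct specialization of Proposition \ref{prop_lefprin} to $N = p^n$, so that the only real work is (i) simplifying the geometric factor $\frac{-N^3}{12}\prod_{q\mid N}(1-q^{-2})$ and (ii) pinning down the multiplicities $A$ and $B$ from Rohlfs's table (Theorem \ref{thm_table}) under the stated hypotheses on $p$.

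First I would record the structural simplifications forced by the hypotheses. Since $p$ is \emph{unramified} in $K$, we have $p \nmid D$, so $p$ lies in the prime-to-$D$ part of the level and contributes nothing to $s = |\{q : q\mid D,\ q\neq 2,\ j_q \neq 0\}|$; as $p$ is the only prime dividing $N$, this forces $s = 0$. Since $p$ is \emph{odd}, $N = p^n$ is odd, so $j_2 = 0$. Moreover $-1 \notin \Gamma(p^n)$ for $n\geq 1$, so only the surface classes in $H(1)$ contribute, exactly as in Proposition \ref{prop_lefprin}. With $N = p^n$ the geometric factor collapses to a single Euler factor:
$$\frac{-N^3}{12}\prod_{q\mid N}(1-q^{-2}) = \frac{-p^{3n}}{12}\left(1 - p^{-2}\right) = \frac{-(p^{3n}-p^{3n-2})}{12}.$$

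Next I would read off $A$ and $B$ from Theorem \ref{thm_table} with $s = 0$ and $j_2 = 0$, and form $A + 2B$. The case distinction in the corollary is precisely the dichotomy of whether $2$ ramifies in $K$: for $d \equiv 1 \pmod 4$ the prime $2$ is unramified and the table gives $B = 0$, so only the $F(\gamma_1)$-surfaces contribute; for $d \equiv 2,3 \pmod 4$ the prime $2$ ramifies and the $F(\gamma_1')$-surfaces contribute as well, i.e.\ $B \neq 0$, which doubles the combination $A + 2B$ relative to the first case. Substituting the resulting value of $A + 2B$ together with the Euler factor above into Proposition \ref{prop_lefprin}, and using $\tr(\gamma\sigma \mid E_{k,k}) = k+1$ from Lemma \ref{lem_trreform}, yields the two displayed formulas.

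The genuinely delicate step is the bookkeeping of the powers of two: one must match the counts $A,B$ coming from Rohlfs's parametrization of the fixed-point set against the normalization used in Proposition \ref{prop_lefprin}, tracking carefully the passage from $\sl$ to $\psl$ (equivalently the role of $\pm 1$) so that the final exponents of $2$ come out as $t-1$ in the unramified-$2$ case and $t$ in the ramified-$2$ case. Everything else is a routine substitution.
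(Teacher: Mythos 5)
Your overall route is the same as the paper's: specialize Proposition \ref{prop_lefprin} to $N=p^n$, simplify the Euler factor to $\frac{-(p^{3n}-p^{3n-2})}{12}$, and read $A,B$ off Rohlfs's table with $j_2=0$. The Euler-factor computation and the observation that the $d\equiv 2,3 \pmod 4$ rows have $B=2^{t-s-1}\neq 0$ (so that $A+2B$ doubles relative to the $d\equiv 1\pmod 4$ row) are both correct.

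The gap is in the value of $s$, which is exactly the power-of-two bookkeeping you defer to the end and never actually resolve. You argue that since $p\nmid D$ and $p$ is the only prime dividing $N$, the set $\{q: q\mid D,\ q\neq 2,\ j_q\neq 0\}$ is empty, hence $s=0$. With $s=0$ and $j_2=0$ the table gives $A+2B=2^{t}$ when $d\equiv 1\pmod 4$ and $A+2B=2^{t+1}$ otherwise, i.e.\ \emph{twice} the constants $2^{t-1}$ and $2^{t}$ appearing in the corollary. The paper's proof instead takes $s=1$, justified by ``since $p$ is the only divisor of the level'' --- which is incompatible with the literal definition of $s$ in Theorem \ref{thm_table} (your reading of that definition is the defensible one), but is what produces the stated constants. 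So either the condition ``$q\mid D$'' in the definition of $s$ is a misstatement and $s$ is meant to count odd primes dividing the level (in which case your $s=0$ is wrong and must be replaced by $s=1$), or the corollary's constants are off by a factor of $2$. Your proposal cannot be completed as written: the claim that ``substituting the resulting value of $A+2B$ \dots yields the two displayed formulas'' is false for the value $s=0$ you computed, and the closing paragraph merely asserts that careful tracking of $\pm 1$ and the $\sl$-versus-$\psl$ distinction will fix the exponents without identifying that the discrepancy lives entirely in $s$ rather than in that passage.
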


\begin{proof} Since $p$ is odd, $j_2=0$. Moreover since $p$ is the only divisor of the level, we have $s=1$. Thus in the 
case $d \equiv 1 \mod 4$, we have $A+2B=2^{t-1}$, where as in the other case $A+2B=2^t$. 
\end{proof}


\subsection{{Lefschetz numbers for the full Bianchi groups}}

Let $\Gamma$ denote the full Bianchi group $\sl(\O)$. For $k=0$, that is $E_{k,k}=\C$, the Lefschetz numbers for $\sigma$ and 
$\tau$ were computed by Kr\"amer. For general $E_{k,k}$, these numbers were computed by Blume-Neinhaus.

For a rational prime $p$ which ramifies in $K$ and an integer $a$, let $(a | p)$ denote the Hilbert symbol. By definition, $(a|p)$ is equal $1$ if there is an element in some finite extension of $K_p$, the completion of $K$ at the unique prime ideal over $p$, whose norm is equal to $a$, and is equal to $-1$ otherwise. 
Equivalently, $(a | p)$ is the value at $a$ of the quadratic character associated to the local extension $\Q_p(\sqrt{d})/ \Q_p$. Note that if $p \not= 2$, then $(a|p)$ is equal to the
 Legendre symbol $\left ( \frac{a}{p} \right )$. 

\begin{theorem}[Blume-Nienhaus, \cite{blume}]\label{thm_leftau} 
Let $D$ be the discriminant of $K/\Q$ with $D_2$ its 2-part. Let $\rho$ represent either $\tau$ or $\sigma$. Also, put $q=1,-1$ depending on 
whether $\rho=\tau$ and $\rho=\sigma$ respectively.
\begin{eqnarray*}
(-1)^k L(\rho,\Gamma,E_{k,k})&=&  
  \frac{-q}{12}\prod_{\substack{p|D\\ p\neq 2}} (p+ \left ( \frac{q}{p} \right )) \prod_{\substack{p|D\\ p=2}} (D_2 + (q |2)) \cdot (k+1) \\
  &+&\frac{q}{12}  \prod_{\substack{p|D\\ p\neq 2}}  (1+ \left (\frac{-q}{p} \right )) \prod_{\substack{p|D\\ p=2}} (4 + (-q |2))  \cdot (-1)^k   (k+1) \\
                                          &+& \frac{1}{2} \prod_{\substack{p|D\\ p\neq 2}} (1+ \left (\frac{-2q}{p} \right )) \cdot \left (\frac{k+1}{4} \right ) \\
                                          &+& \frac{1}{3}\Biggl (  \prod_{\substack{p|D\\ p\neq 3}} (1+ (-3q |p) ) + (-1)^k \prod_{p|D} (1 + (-q |p))  \Biggr) \cdot \left (\frac{k+1}{3} \right ).
\end{eqnarray*}
Here products over empty sets are understood to be equal to $1$.
\end{theorem}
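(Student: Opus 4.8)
The plan is to apply the general Lefschetz fixed point formula of Blume-Nienhaus (Theorem \ref{thm_ltfblume}) directly to $\Gamma = \sl(\O)$, which, unlike the principal congruence subgroups $\Gamma(N)$ with $N>2$, is not torsion-free; so the geometric reformulation of Theorem \ref{thm_lef} is unavailable and one must sum over centralizers. Writing $\g = \langle\rho\rangle$ with $\rho\in\{\sigma,\tau\}$ an involution, the formula reads
$$L(\rho,\Gamma,E_{k,k}) = \sum_{[\gamma\rho]} \chi(\Gamma^{\gamma\rho})\cdot \tr(\gamma\rho \mid E_{k,k}),$$
the sum running over $\Gamma$-conjugacy classes of torsion elements $\gamma\rho$ with $\gamma\in\Gamma$. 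By Lemma \ref{lem_trreform} the trace collapses to $\tr(\gamma\rho\mid E_{k,k}) = \tr(x\mid E_k)$, where $x = (\gamma\rho)^2 = \gamma\cdot{}^\rho\gamma \in \Gamma$. Since $d\neq -1,-3$, the torsion of $\sl(\O)$ is exhausted up to conjugacy by $x=\pm 1$ together with the elements of orders $3,4,6$ coming from $\psl$-torsion; so the first task is to enumerate the twisted conjugacy classes $[\gamma\rho]$ according to the conjugacy type of $x$, which is precisely the refinement of the non-abelian cohomology $H^1(\g,\Gamma)$ carried out by Rohlfs in \cite{rohlfs-78, rohlfs-84}.

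I would then treat the formula as a sum of four packets indexed by the possible $x$. First, the classes with $x=1$ contribute $\tr(1\mid E_k)=k+1$; their centralizers $\Gamma^{\gamma\rho}$ are $\sl(\Z)$-type arithmetic subgroups (twisted fixed groups of $\rho$), whose virtual Euler characteristics are rational multiples of $\chi(\sl(\Z)) = -1/12$, producing the leading $\tfrac{-q}{12}(k+1)$ term. Second, the classes with $x=-1$ contribute $\tr(-1\mid E_k)=(-1)^k(k+1)$ and give the second term. Third and fourth, the classes with $x$ of order $4$ and of orders $3$ and $6$ contribute the periodic traces of such elements on $\mathrm{Sym}^k E$, which are the quantities denoted $\left(\frac{k+1}{4}\right)$ and $\left(\frac{k+1}{3}\right)$; here the components are isolated points with finite stabilizers, so $\chi(\Gamma^{\gamma\rho}) = 1/|\Gamma^{\gamma\rho}|$ supplies the $\tfrac12,\tfrac13$ prefactors, and the splitting of the order-$3$ and order-$6$ contributions (the latter differing by a factor $(-1)^k$) accounts for the two summands inside the last term. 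The distinction between $\sigma$ and $\tau$ enters only through the sign $q=\pm 1$ and the $\beta=\mathrm{diag}(-1,1)$-twist recorded in Section \ref{section_LNSI}; I would carry it through uniformly via the identity $\tau = \beta(\cdot)\beta\circ\sigma$, so that a single computation with the parameter $q$ yields both cases at once.

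The main obstacle is the simultaneous \emph{classification and counting} of these twisted conjugacy classes together with the determination of their centralizers — the step underlying the product factors $\prod_{p\mid D}(\cdots)$. Each such product is a local--global count: at every prime $p$ dividing the discriminant $D$ one must count the classes of a given type that are realized over the completion $K_p$, and these local counts are exactly the values of the Hilbert symbols $(q\mid p)$, $(-q\mid p)$, $(-2q\mid p)$, $(-3q\mid p)$, multiplied across $p\mid D$ and corrected at $p=2$ by the $2$-part $D_2$. Assembling these counts, reconciling the cases $d\equiv 1,2,3\pmod 4$ (which fix the shape of $\O$ and hence the order structure), and checking that the resulting rational contributions collapse into the stated closed form is the technical heart of the argument; this is the content of Blume-Nienhaus's thesis, and I would follow his local analysis prime by prime, using Rohlfs' description of the fixed-point components as the organizing principle.
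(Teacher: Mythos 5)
Your route and the paper's are different in kind. The paper does not reprove this formula at all: it is imported wholesale from Blume-Nienhaus' thesis (Theorem V.5.3 there), and the entire content of the paper's proof is the observation that the involutions used by Blume-Nienhaus, namely $\left(\begin{smallmatrix} 0 & 1 \\ q & 0 \end{smallmatrix}\right)\sigma$, differ from the paper's $\sigma$ and $\tau=\left(\begin{smallmatrix} -1 & 0 \\ 0 & 1 \end{smallmatrix}\right)\sigma$ by conjugation by $\left(\begin{smallmatrix} 0 & 1 \\ -1 & 0 \end{smallmatrix}\right)$, an inner automorphism of $\sl(\O)$, which acts trivially on cohomology; hence the two Lefschetz numbers coincide. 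You instead outline a re-derivation from the trace formula of Theorem \ref{thm_ltfblume}, organizing the twisted conjugacy classes by the order of $x=(\gamma\rho)^2$ and matching the four packets to the four lines of the formula. That organization is indeed the skeleton of Blume-Nienhaus' own argument, and your identification of the $x=\pm1$ contributions (traces $k+1$ and $(-1)^k(k+1)$, Euler characteristics rational multiples of $\chi(\sl(\Z))=-1/12$) and of the finite-order contributions (periodic traces, $\chi=1/|\Gamma^{\gamma\rho}|$) is sound in outline.

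The gap is that the proposal stops exactly where the work begins. The product factors $\prod_{p\mid D}(\cdots)$ with their Hilbert-symbol entries are asserted to be local counts of twisted classes, but nothing in the outline actually enumerates the classes, determines the centralizers, or verifies the anomalous behaviour at $p=2$ encoded by $D_2$; you explicitly defer all of this to Blume-Nienhaus. If the intended proof is ultimately a citation of his thesis, then the one step you must still supply --- and the only step the paper supplies --- is the reconciliation of the involutions $\sigma$, $\tau$ with his $\left(\begin{smallmatrix} 0 & 1 \\ q & 0 \end{smallmatrix}\right)\sigma$ via the inner automorphism above; without it the cited formula does not literally apply to the statement as written. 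If instead the proof is meant to be self-contained, the prime-by-prime classification and counting constitute essentially the whole theorem and cannot be left as a plan.
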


\begin{proof} Observe that in Blume-Nienhaus' notation, $\Gamma(1)=\Gamma(-1)=\sl(\O)$. For $q=1,-1$ respectively, 
his involutions $(\begin{smallmatrix} 0 & 1 \\ q & 0 \end{smallmatrix}) \sigma$ (see Theorem V.5.3. of \cite{blume}) differ from our 
$\tau=(\begin{smallmatrix} -1 & 0 \\ 0 & 1 \end{smallmatrix}) \sigma$ and $\sigma$ by $(\begin{smallmatrix} 0 & 1 \\ -1 & 0 \end{smallmatrix})$. However at the level of cohomology, his involutions induce the same action as ours: conjugation by $(\begin{smallmatrix} 0 & 1 \\ -1 & 0 \end{smallmatrix})$ is an inner-automorphism of $\sl(\O)$ and hence induces trivial action on the cohomology, see \cite[p.79]{brown}.
\end{proof}


\section{Trace on the Eisenstein cohomology} \label{section_eisenstein}

In order to obtain lower bounds for the cuspidal cohomology using the Lefschetz numbers, we have seen that one needs to compute the 
trace of the involution on the Eisenstein part of the cohomology in all degrees. This is no simple task. 

In this section we study the trace of involutions induced by $\sigma$ and $\tau$ on the Eisenstein part of the cohomology. The boundary 
$\partial X_{\Gamma}$ is a disjoint union of 2-tori, each closing a cusp of $Y_{\Gamma}$. 
The set of cusps of $\Gamma$ can be identified with the orbit space $\Gamma \backslash \mathbb{P}^1(K)$. 
It is well-known that the number of cusps is $h(K)$, the class number of $K$, when $\Gamma$ is the full Bianchi group.

The fundamental group of a 2-torus is a free abelian group on two generators and it is easy to compute the 
size of its cohomology. 
\begin{proposition} Let $\Gamma$ be a congruence subgroup of a Bianchi group and let $c(\Gamma)$ denote the number of cusps of $\Gamma$. Then 
$$\dim H^0(\partial X_{\Gamma},\bar{\E}_k) =dim H^2(\partial X_{\Gamma},\bar{\E}_k)= c(\Gamma)$$
$$\dim H^1(\partial X_{\Gamma},\bar{\E}_k) = 2 \cdot c(\Gamma).$$
\end{proposition}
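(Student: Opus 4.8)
The plan is to compute the cohomology of the boundary $\partial X_\Gamma$ by reducing to a single connected component and invoking the fact that each component is a $2$-torus. First I would recall from the discussion immediately preceding the statement that $\partial X_\Gamma$ is a disjoint union of $c(\Gamma)$ two-tori, one for each cusp, the set of cusps being identified with $\Gamma\backslash\mathbb{P}^1(K)$. Since cohomology of a disjoint union is the direct sum of the cohomologies of the components, it suffices to establish that each single torus contributes $1$, $2$, $1$ to the dimensions in degrees $0$, $1$, $2$ respectively, and then multiply through by $c(\Gamma)$.

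The key computation is therefore the cohomology of one boundary torus $T$ with coefficients in the restriction of the local system $\bar{\E}_k$. Here I would use that $T$ is a $K(\pi,1)$ for its fundamental group, which is a free abelian group $\Z^2$ on two generators (the stabilizer of the corresponding cusp, modulo its center/unipotent structure), so that $H^i(T,\bar{\E}_k)\cong H^i(\Z^2, E_k)$ as group cohomology. The crucial point is that the stabilizer of a cusp in a Bianchi group acts \emph{unipotently} on $E_k$: up to conjugation the stabilizer consists of upper-triangular matrices $\bigl(\begin{smallmatrix} 1 & * \\ 0 & 1\end{smallmatrix}\bigr)$, and the generators act trivially modulo lower-order terms. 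I would argue that the $\Z^2$-action on $E_k$ (or more precisely that the invariants and coinvariants are one-dimensional) so that the group cohomology of $\Z^2$ with these coefficients matches that of trivial coefficients up to the relevant dimensions. Concretely, for a free abelian group $\Z^2=\langle s,t\rangle$ acting on a vector space $V$, one has $H^0=V^{\Z^2}$, $H^2=V_{\Z^2}$ (the coinvariants), and $H^1$ fits in the Koszul-type complex $V\xrightarrow{(s-1,\,t-1)} V\oplus V \xrightarrow{(t-1,\,-(s-1))} V$, giving $\dim H^1 = \dim H^0 + \dim H^2$. Since the unipotent action has a one-dimensional space of invariants and of coinvariants on $E_k$, this yields $\dim H^0 = \dim H^2 = 1$ and $\dim H^1 = 2$ per torus.

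The main obstacle I anticipate is justifying that the invariants and coinvariants of $E_k$ under the cuspidal stabilizer are exactly one-dimensional, rather than merely handling the trivial-coefficient case $E_0=\C$ where everything is obvious. For the unipotent generator acting as $\bigl(\begin{smallmatrix} 1 & u \\ 0 & 1\end{smallmatrix}\bigr)$ on homogeneous polynomials of degree $k$, the invariant subspace is spanned by the single monomial $x^k$ and the coinvariant space is likewise one-dimensional, so the nilpotent operator $s-1$ has the right rank; the second generator acts compatibly and does not enlarge the kernel or cokernel in the relevant slots. Once this is verified, the Koszul/Künneth computation over $\Z^2$ is routine, and summing over the $c(\Gamma)$ components gives the stated dimensions. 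I would keep this step brief, citing the explicit shape of cuspidal stabilizers in Bianchi groups, since the structure of parabolic subgroups of $\sl$ makes the unipotent action transparent.
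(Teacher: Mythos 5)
Your argument is correct and supplies exactly the computation the paper leaves implicit: the paper gives no proof beyond the remark that each boundary component is a $2$-torus with fundamental group $\Z^2$, so your reduction to one component followed by the Koszul-complex computation of $H^i(\Z^2,-)$, with $\dim H^1=\dim H^0+\dim H^2$ and one-dimensional invariants and coinvariants, is precisely the intended route. One caveat worth fixing: the coefficient module here is really the one attached to $E_{k,k}=E_k\otimes_{\C}\overline{E_k}$ (this is what Corollary \ref{cor_eisensteinbound} needs), and for this tensor product a \emph{single} unipotent generator no longer has one-dimensional invariants (the tensor product of two Jordan blocks of size $k+1$ has invariants of dimension $k+1$); you must use both generators of the rank-two lattice $U_c\simeq\O$ --- one real and one with non-real $\omega$ --- to force invariance in each tensor factor separately and cut the invariant space down to the line spanned by $X^k\otimes\bar{X}^k$, exactly as the paper does later in the proof of Theorem \ref{trace2_eisenstein}.
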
 

The long exact sequence associated to the pair $(X_{\Gamma}, \partial X_{\Gamma})$ is compatible with 
the action of the involution $\tau$. 
It follows from algebraic topology that for $k>0$, the image of the restriction map 
$$H^i(X_{\Gamma},\bar{\E}_k) \rightarrow H^i(\partial X_{\Gamma},\bar{\E}_k)$$ 
is onto when $i=2$ and its image has half the rank of the target space when $i=1$. Hence we have the following. 

\begin{corollary}\label{cor_eisensteinbound}
Let $k > 0$ and $\Gamma$ as above. Then 
$$ \dim H^i_{Eis}(\Gamma,E_{k,k}) = c(\Gamma),  \ \ \  i=0,1,2.$$
Hence 
$$|\tr(\tau^i_{Eis}) | \leq c(\Gamma)$$
for any involution $\tau$. 
\end{corollary}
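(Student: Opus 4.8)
The plan is to assemble the statement from the two pieces immediately preceding it: the explicit computation of the cohomology of the boundary $\partial X_\Gamma$ in the Proposition, and the topological fact that the restriction map $H^i(X_\Gamma,\bar{\E}_k)\to H^i(\partial X_\Gamma,\bar{\E}_k)$ is surjective for $i=2$ and has image of half the rank of the target for $i=1$ (when $k>0$). First I would recall that, by definition, $H^i_{Eis}(\Gamma,E_{k,k})$ is precisely the image of this restriction map inside $H^i(\partial X_\Gamma,\bar{\E}_k)$, as stated in Section \ref{subsec_boundsviaLefschetz}. Thus computing $\dim H^i_{Eis}$ amounts to reading off the rank of this image.

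For the three degrees I would argue as follows. In degree $i=0$, the boundary is a disjoint union of $c(\Gamma)$ two-tori, so $H^0(\partial X_\Gamma,\bar{\E}_k)$ has dimension $c(\Gamma)$; since $X_\Gamma$ is connected with the same number of boundary components contributing constants, the restriction in degree $0$ is an isomorphism onto the boundary $H^0$, giving $\dim H^0_{Eis}=c(\Gamma)$. In degree $i=2$, the restriction map is onto by the quoted topological fact, and the target has dimension $c(\Gamma)$ by the Proposition, so $\dim H^2_{Eis}=c(\Gamma)$. In degree $i=1$, the target has dimension $2\,c(\Gamma)$, and the image has exactly half this rank, namely $c(\Gamma)$; hence $\dim H^1_{Eis}=c(\Gamma)$ as well. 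This establishes the first displayed equation for all three degrees uniformly.

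For the trace bound, I would use that $\tau$ acts as an involution on each $H^i_{Eis}(\Gamma,E_{k,k})$, a space of dimension $c(\Gamma)$. Since the eigenvalues of an involution are $\pm 1$, the trace is a sum of $c(\Gamma)$ terms each equal to $\pm 1$, and therefore $|\tr(\tau^i_{Eis})|\le c(\Gamma)$. This is the desired inequality. I would phrase this last step so that it applies to any orientation-reversing involution whose action extends to $X_\Gamma$ and is compatible with the long exact sequence, as guaranteed for both $\sigma$ and $\tau$ in Section \ref{section_LNSI}.

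The only genuine input here is the "half the rank" claim in degree one, which the text attributes to algebraic topology; everything else is bookkeeping. The main obstacle, should one want a self-contained argument, is justifying that half-rank statement, which follows from Poincar\'e--Lefschetz duality for the compact manifold-with-boundary $X_\Gamma$: the long exact sequence of the pair $(X_\Gamma,\partial X_\Gamma)$ together with duality forces the image of restriction in the middle degree of the $2$-dimensional boundary to be a maximal isotropic subspace for the cup-product pairing, hence of half rank. Since the excerpt already grants this fact, I would cite it directly rather than reprove it, and the corollary then follows purely formally.
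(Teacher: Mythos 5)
Your argument for $i=1,2$ and for the trace bound is exactly the paper's: the corollary is stated there with no separate proof, as an immediate consequence of the preceding proposition on $\dim H^i(\partial X_\Gamma,\bar{\E}_k)$ and the quoted fact that the restriction map is onto in degree $2$ and has image of half the rank of the target in degree $1$; the $\pm 1$-eigenvalue argument for $|\tr(\tau^i_{Eis})|\le \dim H^i_{Eis}$ is likewise the intended one. Your closing remark that the half-rank statement comes from Poincar\'e--Lefschetz duality (the image of restriction being Lagrangian for the cup-product pairing on the boundary) is a correct and useful justification of the one nontrivial input, which the paper simply asserts.

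The one place your reasoning does not hold up is degree $0$. You claim the restriction $H^0(X_\Gamma,\bar{\E}_k)\to H^0(\partial X_\Gamma,\bar{\E}_k)$ is an isomorphism onto the target ``since $X_\Gamma$ is connected.'' That is the right picture for the constant sheaf, but for $k>0$ the module $E_{k,k}$ is an irreducible nontrivial $\Gamma$-representation, so $H^0(X_\Gamma,\bar{\E}_k)=E_{k,k}^{\Gamma}=0$ (the paper itself uses this when it notes $\tr(\rho^0)=0$ for $k>0$). Hence the restriction map in degree $0$ is the zero map, its image cannot have dimension $c(\Gamma)$, and the correct value is $\dim H^0_{Eis}=0$. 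This is really a defect of the corollary as printed rather than of your argument alone, but you should not manufacture a justification for it: the honest statement is that the $i=0$ case is degenerate, and the trace inequality $|\tr(\tau^0_{Eis})|\le c(\Gamma)$ holds trivially because the space vanishes. Nothing downstream in the paper is affected, since only the $i=1,2$ cases are ever used.
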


The following is direct consequence a result of Serre (see \cite[Th\`eor\'eme 9]{serre}).
\begin{proposition} Let $\Gamma=\sl(\O)$. Then the image of the restriction map
$$H^1(X_{\Gamma},\C) \rightarrow H^1(\partial X_{\Gamma},\C)$$ 
is inside the $-1$-eigenspace of complex conjugation acting on $H^1(\partial X_{\Gamma},\C)$.
\end{proposition}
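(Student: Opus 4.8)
The plan is to reduce the statement to a computation of the $\pm 1$-eigenspaces of $\sigma$ on the cohomology of the boundary, together with a single arithmetic input supplied by Serre. First I would recall the geometry of the boundary. As noted above, $\partial X_{\Gamma}$ is a disjoint union of $2$-tori indexed by the cusps, and for $\Gamma=\sl(\O)$ the set of cusps is in bijection with the ideal class group $\mathrm{Cl}(K)$, so there are $h$ of them. Complex conjugation $\sigma$ acts on $\P^1(K)$, hence on the cusps; under the identification with $\mathrm{Cl}(K)$ it sends a class $[\mathfrak{a}]$ to $[\overline{\mathfrak{a}}]=[\mathfrak{a}]^{-1}$, because $\mathfrak{a}\overline{\mathfrak{a}}$ is principal. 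Thus $\sigma$ permutes the boundary tori by inversion on $\mathrm{Cl}(K)$, fixing exactly the $2^{t-1}$ tori attached to the $2$-torsion classes (by genus theory), and on each fixed torus $\C/\mathfrak{a}$ it acts by $z\mapsto\bar z$, which is orientation-reversing. A short bookkeeping then shows that the $+1$- and $-1$-eigenspaces of $\sigma$ on $H^1(\partial X_{\Gamma},\C)$ each have dimension $h$: a swapped pair of tori contributes a two-dimensional summand to each eigenspace, while each of the $2^{t-1}$ fixed tori contributes one dimension to each (its $H^1(\C/\mathfrak{a})\cong\C^2$ splits into the fixed ``real'' line and the anti-invariant ``imaginary'' line).

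Next I would exploit the symplectic structure of the boundary. Since $\bar\Gamma$ acts on $\H$ by orientation-preserving isometries, $X_{\Gamma}$ is an oriented compact $3$-manifold with boundary, so by Poincar\'e--Lefschetz duality (``half lives, half dies'') the image $I$ of the restriction map $H^1(X_{\Gamma},\C)\to H^1(\partial X_{\Gamma},\C)$ is a Lagrangian subspace of dimension $h$ for the cup-product form $\langle a,b\rangle=\int_{\partial X_{\Gamma}}a\cup b$. Because $\sigma$ reverses the orientation of $X_{\Gamma}$, it reverses the orientation of $\partial X_{\Gamma}$ as well, so $\sigma^{*}$ acts by $-1$ on the relevant fundamental classes and hence $\langle \sigma a,\sigma b\rangle=-\langle a,b\rangle$; that is, $\sigma$ is anti-symplectic. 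For an anti-symplectic involution both eigenspaces $V_{\pm}$ are isotropic, hence Lagrangian, each of dimension $h$, consistent with the count above. Since the restriction map is $\sigma$-equivariant, $I$ is $\sigma$-stable and therefore splits as $I=(I\cap V_{+})\oplus(I\cap V_{-})$. As $\dim I=\dim V_{-}=h$, the proposition is now equivalent to the single assertion $I\cap V_{+}=0$, i.e.\ that no nonzero extendable boundary class is fixed by $\sigma$.

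Finally, this vanishing is exactly what the cited result of Serre provides: his description of the image of the restriction map (equivalently, of which characters of the cusp stabilizers extend over $\sl(\O)$) identifies $I$ with the anti-invariant part $V_{-}$, ruling out any $\sigma$-fixed contribution. I would therefore conclude by quoting \cite[Th\`eor\'eme 9]{serre} to obtain $I\subseteq V_{-}$, which together with the dimension count gives $I=V_{-}$ and proves the proposition. The main obstacle is precisely this last sign: the topology and the anti-symplectic formalism are completely symmetric in $V_{+}$ and $V_{-}$ and cannot by themselves distinguish them, so the fact that it is the $-1$-eigenspace that is hit is genuinely arithmetic, reflecting the odd behaviour of the constant terms of the relevant Eisenstein classes under complex conjugation. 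Establishing that oddness from scratch would require analysing these constant terms (as Harder does adelically); here it is imported wholesale from Serre's theorem, which is what makes the proposition a direct consequence.
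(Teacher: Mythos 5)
Your proposal is correct and, at the decisive step, takes the same route as the paper: the paper gives no argument at all beyond citing \cite[Th\`eor\'eme 9]{serre}, which is exactly where you too place the entire arithmetic content (the vanishing of $I\cap V_{+}$). Your surrounding topological scaffolding --- the eigenspace dimension count via inversion on the class group and the Lagrangian/anti-symplectic framing --- is accurate and consistent with the paper's later use of the result (e.g.\ $\tr(\sigma^{1}_{Eis})=-h(K)$), but it is not needed for the stated containment, which follows directly from Serre.
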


Let us note that this result is extended to all maximal orders of $M_2(K)$  (with complex conjugation twisted accordingly) by Blume-Nienhaus \cite[V.5.7.]{blume} and by Berger \cite[Section 5.2.]{berger}.

\begin{corollary} \label{cor_serre} Let $\sigma^i_{Eis}$ be the involution on $H^i_{Eis}(\sl(\O), \C)$ given by complex conjugation. Then
$$\tr(\sigma^0_{Eis})=1, \ \ \ \  \tr(\sigma^1_{Eis})=-h(K), \ \ \ \ \tr(\sigma^2_{Eis})=-2^{t-1}+1$$
where  $t$ be the number of primes that ramify in $K$ and $h(K)$ is the class number of $K$.
\end{corollary}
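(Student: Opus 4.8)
The plan is to read off the three traces from the structure of $H^i_{Eis}(\sl(\O),\C)$ as the $\sigma$-equivariant image of the restriction map into the boundary cohomology $H^i(\partial X_\Gamma,\C)$, treating $i=0,1,2$ separately. Throughout I take $\Gamma=\sl(\O)$, so the boundary $\partial X_\Gamma$ is a disjoint union of $h(K)$ two-tori indexed by the cusps $\Gamma\backslash\P^1(K)$, and I work with trivial coefficients $\C$ (the case $k=0$).

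First, the degree-zero case. Here $H^0_{Eis}(\Gamma,\C)=H^0(\Gamma,\C)=\C$ is the space of constants, on which $\sigma$ acts trivially; the restriction $H^0(X_\Gamma,\C)\to H^0(\partial X_\Gamma,\C)$ is injective onto the diagonal constants, and $\sigma$ fixes constant functions. Hence $\tr(\sigma^0_{Eis})=1$.

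Next, degree one. By definition $H^1_{Eis}(\Gamma,\C)$ is the image of the restriction map $H^1(X_\Gamma,\C)\to H^1(\partial X_\Gamma,\C)$, and the cited result of Serre tells us precisely that this image lies in the $(-1)$-eigenspace of complex conjugation acting on $H^1(\partial X_\Gamma,\C)$. Combined with Corollary \ref{cor_eisensteinbound} (more precisely, with the boundary-cohomology dimension count, which for $k=0$ gives $\dim H^1_{Eis}=h(K)$ since the image has half the rank of the $2h(K)$-dimensional target), I conclude that $\sigma$ acts as $-1$ on an $h(K)$-dimensional space, so $\tr(\sigma^1_{Eis})=-h(K)$. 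The only thing to verify is that the image really has dimension $h(K)$ with trivial coefficients and that Serre's statement applies verbatim to $\sl(\O)$; this is exactly the content of the quoted Th\'eor\`eme 9.

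Finally, degree two, which is the main obstacle and the one genuinely requiring input beyond Serre's result. I cannot simply invoke an eigenspace statement, because the restriction map is \emph{onto} $H^2(\partial X_\Gamma,\C)$, so $H^2_{Eis}$ is the full $h(K)$-dimensional boundary cohomology and $\sigma$ need not act by a single sign. The cleanest route is to combine the known value of the degree-two Eisenstein trace from Theorem \ref{thm_trace2eis} specialized to $\Gamma=\sl(\O)$ and $k=0$, which gives $\tr(\sigma\mid H^2_{Eis}(\sl(\O),\C))=-2^{t-1}+\delta(0,0)=-2^{t-1}+1$. Thus the value $\tr(\sigma^2_{Eis})=-2^{t-1}+1$ is precisely the $k=0$ instance of that theorem. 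Alternatively, and consistently, one can compute it intrinsically: $\sigma$ permutes the $h(K)$ cusps via its action on $\Gamma\backslash\P^1(K)$, and on $H^2$ of each torus (the orientation class) $\sigma$ acts by the orientation sign, so $\tr(\sigma^2_{Eis})$ equals a signed count of $\sigma$-fixed cusps; identifying this count with $-2^{t-1}+1$ via the class-group/genus-theory description of the fixed cusps (the $2^{t-1}$ coming from the $t$ ramified primes) is the delicate step. Either way the answer matches, so I would present the degree-two value as the $k=0$ specialization of Theorem \ref{thm_trace2eis}, noting the fixed-cusp interpretation as the geometric reason.
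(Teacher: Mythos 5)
Your computations of $\tr(\sigma^0_{Eis})$ and $\tr(\sigma^1_{Eis})$ are correct and coincide with the paper's argument (constants in degree $0$; Serre's theorem plus the dimension count $\dim H^1_{Eis}=h(K)$ in degree $1$).

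The degree-two part has a genuine gap, on both of the routes you offer. Your primary route --- quoting Theorem \ref{thm_trace2eis} at $k=0$ --- is circular in the logic of the paper: the proof of Theorem \ref{trace2_eisenstein} establishes only the case $k>0$ directly and explicitly defers the $k=0$ case (i.e.\ the source of the $\delta(0,k)$ term) to ``the basic observations that were employed at the end of the proof of Corollary \ref{cor_serre}.'' So the corollary is an input to that theorem's $k=0$ statement, not a consequence of it. Your backup intrinsic route contains an error that matters: for trivial coefficients the restriction map $H^2(X_\Gamma,\C)\to H^2(\partial X_\Gamma,\C)$ is \emph{not} onto. From the long exact sequence of the pair, $H^2(\partial X,\C)\simeq H^2_{Eis}(X,\C)\oplus H^3(X,\partial X,\C)$ with $H^3(X,\partial X,\C)\cong\C$ (dual to $H^0(X,\C)$), so $\dim H^2_{Eis}=h(K)-1$, not $h(K)$. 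The surjectivity you invoke holds only for $k>0$, where $H^3(X,\partial X,\bar\E_k)$ vanishes. As written, your signed count of $\sigma$-fixed cusps computes the trace on all of $H^2(\partial X,\C)$, which is $-2^{t-1}$ (there are $2^{t-1}$ fixed cusps by genus theory, each contributing $-1$ since $\sigma$ reverses orientation on the corresponding torus, and the non-fixed cusps pair off), and would therefore yield $-2^{t-1}$ rather than $-2^{t-1}+1$. The paper's proof closes this gap precisely by noting that $\sigma$ acts as $-1$ on the complementary summand $H^3(X,\partial X,\C)\cong\C$ (which follows by duality from the trivial action on $H^0(X,\C)$), so that $\tr(\sigma^2_{Eis})=-2^{t-1}-(-1)=-2^{t-1}+1$. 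That correction term is exactly the $+1$ you need and is absent from your argument.
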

\begin{proof} For convenience, put $X=X_{\sl(\O)}$. The claim for $\sigma^0_{Eis}$ follows from the fact  that $H^0_{Eis}(X,\C) = H^0(X,\C)=\C$. The action of $\sigma$ on the latter is trivial. The claim for $\sigma^1_{Eis}$ follows immediately from Serre's result above. It is well-known that the set of cusps of $\sl(\O)$ is in bijection with the class group of $K$ and the action of complex conjugation $\sigma$ on the cusps translates to taking inverse in the class group. Hence  an element of the class group is fixed by $\sigma$ if it is of order $2$. Genus Theory tells us that the number of elements of order 2 in the class group is $2^{t-1}$, implying that the trace of the involution induced by $\sigma$ on $H^0(\partial X,\C)$ is $2^{t-1}$. See \cite[Section 9]{serre} for more details. It follows from Poincar\'e duality and the fact that complex conjugation is orientation-reversing that the trace of the involution induced by $\sigma$ on $H^2(\partial X,\C)$ is $-2^{t-1}$. The long exact sequence associated to the pair $(X, \partial X)$ tells us that that $H^2(\partial X,\C) \simeq H^2_{Eis}( X,\C) \oplus  H^3(X, \partial X,\C)$. Here the last summand is isomorphic to $\C$ and $\sigma$ acts on it as $-1$, which follows from the fact that the action of $\sigma$ on $H^0(X,\C)$ is trivial. This gives the claim for $\sigma^2_{Eis}$. 
\end{proof}

We will now consider the case of principal congruence subgroups. For a positive integer $N$, let us begin by reminding the reader 
the formula for the number of cusps of the principal congruence subgroup $\Gamma(N)$ of level $(N)$:
\begin{equation} \label{cuspformula}
c(\Gamma(N)) = h(K) \cdot \dfrac{\# \sl(\O / (N))}{N^2} = h(K) \cdot N^4 \prod_{\p} (1- \textbf{N}(\p)^{-2}).
\end{equation}
where product on the right hand side runs over the prime factors $\p$ of the ideal $(N)$ and $\textbf{N}(\p)$ denotes the norm of $\p$.

The following theorem generalizes part of the above Corollary and part of the results announced by Harder at the very end of \cite{harder-75}.

\begin{theorem} \label{trace2_eisenstein}
 Let $K$ be an imaginary quadratic field and $t$ be the number of rational  primes ramifying in. Let $N=p_1^{n_1} \hdots p_r^{n_r}$ be a positive number whose prime divisors $p_i$ are unramified in $K$ and let $\Gamma(N)$ be the principal congruence subgroup of the Bianchi group $\sl(\O)$ of level (N). Then
$$\tr(\sigma \mid H^2_{Eis}(\Gamma(N),E_{k,k}))=-2^{t-1} \cdot \prod_{i=1}^r (p_i^{2n_i}-p_i^{2n_i-2}) +\delta(0,k),$$ 
where $\delta$ is the Kronecker $\delta$-function, in other words, $\delta(0,k)=0$ unless $k=0$ in which case $\delta(0,k)=1$. In particular, the trace of $\sigma^2_{Eis}$ on $H^2(\sl(\O),E_{k,k})$ is 
$$-2^{t-1}+\delta(0,k).$$
\end{theorem}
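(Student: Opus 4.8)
The plan is to push the computation to the boundary and then reduce it to a count of $\sigma$-fixed cusps. First I would separate the two regimes. For $k>0$ the coefficient system carries no invariants, so by Corollary \ref{cor_eisensteinbound} the restriction $H^2(X_\Gamma,\bar\E_{k,k})\to H^2(\partial X_\Gamma,\bar\E_{k,k})$ is onto and $H^2_{Eis}(\Gamma(N),E_{k,k})$ is identified, $\sigma$-equivariantly, with $H^2(\partial X_\Gamma,\bar\E_{k,k})$; hence $\tr(\sigma\mid H^2_{Eis})=\tr(\sigma\mid H^2(\partial X_\Gamma))$. For $k=0$ I would argue exactly as in Corollary \ref{cor_serre}: the long exact sequence of the pair gives $H^2(\partial X_\Gamma,\C)\simeq H^2_{Eis}(X_\Gamma,\C)\oplus H^3(X_\Gamma,\partial X_\Gamma,\C)$ with the last summand $\simeq\C$ on which $\sigma$ acts by $-1$, so $\tr(\sigma\mid H^2_{Eis})=\tr(\sigma\mid H^2(\partial X_\Gamma))+1$. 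The two cases are then recorded uniformly by the term $\delta(0,k)$, and everything reduces to computing $\tr(\sigma\mid H^2(\partial X_\Gamma,\bar\E_{k,k}))$.

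Next I would exploit that $\partial X_\Gamma=\bigsqcup_c T_c$ is a disjoint union of $2$-tori indexed by the cusps $c\in\Gamma(N)\backslash\P^1(K)$, each contributing a one-dimensional $H^2(T_c,\bar\E_{k,k})$, and that $\sigma$ permutes the $T_c$ through its action on the cusps. In the resulting block decomposition of $H^2(\partial X_\Gamma)$, a pair of cusps interchanged by $\sigma$ contributes $0$ to the trace, while a $\sigma$-fixed cusp contributes the scalar by which $\sigma$ acts on $H^2(T_c,\bar\E_{k,k})$. Since $\sigma$ is orientation-reversing on the horospherical $2$-tori (on $\C\times\{r\}$ it is a reflection), it acts by $-1$ on their top cohomology, the coefficient factor being trivial; this is precisely the mechanism used for the full group in Corollary \ref{cor_serre}. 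Consequently $\tr(\sigma\mid H^2(\partial X_\Gamma))=-\,\#\{\sigma\text{-fixed cusps of }\Gamma(N)\}$, and the theorem comes down to the assertion that the number of $\sigma$-fixed cusps equals $2^{t-1}\prod_{i}(p_i^{2n_i}-p_i^{2n_i-2})$.

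To count the fixed cusps I would first note that the projection $\Gamma(N)\backslash\P^1(K)\to\sl(\O)\backslash\P^1(K)$ is $\sigma$-equivariant, so every $\sigma$-fixed cusp lies over a $\sigma$-fixed cusp of the full group; these base cusps correspond to the $\sigma$-stable ideal classes, i.e. the $2$-torsion in the class group, of which there are $2^{t-1}$ by genus theory, exactly as in Corollary \ref{cor_serre}. Over the principal cusp $\infty=[1:0]$, which $\sigma$ fixes, the fibre is $\Gamma(N)\backslash(\sl(\O)\cdot\infty)$; reducing modulo $N$ identifies it with the set of primitive vectors in $(\O/(N))^2$ taken up to sign, with $\sigma$ acting by coordinate-wise conjugation $v\mapsto\bar v$. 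A fixed point is a class $[v]$ with $\bar v=\pm v$, and by the Chinese Remainder Theorem the count factors over the primes $p_i$. A direct check at each prime—splitting $\O/(p^n)\simeq(\Z/p^n)^2$ with complex conjugation interchanging the factors in the split case, and using that the conjugation-fixed and conjugation-antifixed submodules of $\O/(p^n)$ are each free of rank one in the inert case—shows that in both cases the number of such classes is $p^{2n}-p^{2n-2}$. Hence the fibre over $\infty$ carries exactly $\prod_i(p_i^{2n_i}-p_i^{2n_i-2})$ fixed cusps.

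The main obstacle is to see that each of the remaining $2^{t-1}-1$ fixed base cusps (the non-principal $2$-torsion classes) carries the same number of fixed cusps. Over a class $[\a]$ the point $\sigma x_\a$ lies in the conjugate class $[\bar\a]=[\a]$, so $\sigma$ acts on the fibre only after twisting by a cocycle $g_\a$ implementing this equality, and one must verify that this twist does not alter the fixed-point count. I would handle this by transporting the computation with an element of $\gl(K)$ representing $\a$ and invoking the analysis of the boundary fixed-point set carried out by Rohlfs in \cite{rohlfs-78}: the twisting datum is concentrated away from $N$, so the local count at the primes $p_i$ is unchanged and each fixed base cusp again contributes $\prod_i(p_i^{2n_i}-p_i^{2n_i-2})$. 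Summing over the $2^{t-1}$ base cusps gives $2^{t-1}\prod_i(p_i^{2n_i}-p_i^{2n_i-2})$ fixed cusps, and combining this with the reductions of the first two paragraphs yields $\tr(\sigma\mid H^2_{Eis}(\Gamma(N),E_{k,k}))=-2^{t-1}\prod_i(p_i^{2n_i}-p_i^{2n_i-2})+\delta(0,k)$, which specializes at $N=1$ to the asserted value $-2^{t-1}+\delta(0,k)$ for the full Bianchi group.
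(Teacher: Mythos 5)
Your proposal reaches the right answer but by a genuinely different mechanism than the paper. The paper first applies Shapiro's Lemma (Section \ref{SL}) to replace $\Gamma(N)$ by the full Bianchi group $G=\sl(\O)$ with induced coefficients $\C[\Gamma\backslash G]\otimes E_{k,k}$, then uses Poincar\'e duality on the boundary to convert the $H^2$-trace into an $H^0$-trace, and finally counts $\sigma$-fixed basis vectors of $(\C[\sl(R)])^{U_c(R)}$, identified with the coset space $\P^1(R)\times R^*$, over each of the $2^{t-1}$ conjugation-fixed cusps of $G$. You instead stay with $\Gamma(N)$ itself, decompose $H^2(\partial X_{\Gamma(N)})$ over its boundary tori, and reduce everything to counting $\sigma$-fixed cusps of $\Gamma(N)$, i.e.\ classes of primitive vectors $[v]$ in $(\O/(N))^2$ modulo sign with $\bar v=\pm v$. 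The two counts agree (your quotient by $\pm 1$ halves the ambient set, but the $\bar v=v$ and $\bar v=-v$ loci each contribute $\prod_i(p_i^{2n_i}-p_i^{2n_i-2})$, so the fixed-class count is unchanged), and your route has the merit of making the answer visibly a count of real points on the cusp set, at the price of handling many more fixed boundary components. Two places deserve tightening. First, the assertion that every $\sigma$-fixed torus contributes exactly $-1$ requires checking not only that the map is orientation-reversing but that the implementing element $\gamma\in\Gamma(N)$ with $\gamma\,{}^{\sigma}c=c$ acts by $+1$ on the one-dimensional coefficient line $H^0(U_c,E_{k,k})$; this does hold (conjugating to $\infty$ reduces it to the triviality of $\pm U_\infty$ and of $\sigma$ on $X^k\otimes X^k$), but you should say so, since a sign here would wreck the count. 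Second, your treatment of the $2^{t-1}-1$ non-principal fixed base cusps is the vaguest step; the paper faces the identical issue and resolves it by citing Smillie--Vogtmann to produce an involution $\phi_A$ of $Y_G$ commuting with $\sigma$ that identifies the boundary component at $\infty$ with the one at $c$. You should either invoke that same mechanism (choosing the $2$-torsion ideal $\a$ prime to $N$ so that conjugation preserves $\Gamma(N)$ and the local data at the $p_i$) or make precise the ``twist concentrated away from $N$'' claim; as written it is an appeal to authority rather than an argument.
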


\begin{proof} For convenience, let $G$ denote the Bianchi group $\sl(\O)$. Assume until the very end of the proof that $k>0$.
By the results of Section \ref{SL}, it suffices to compute trace of $\sigma^2$ on $H^2_{Eis}(G, \C[\Gamma \backslash G] \otimes E_{k,k})$. Let $\mathcal{M}_k$ be the locally constant sheaf on $X_{G}$ induced from $\C[\Gamma \backslash G] \otimes E_{k,k}$. 
As the restriction map $H^2(X_{G}, \mathcal{M}_k) \rightarrow H^2(\partial X_{G}, \mathcal{M}_k)$ is onto (here we use that $k>0$), it suffices to compute the trace of $\sigma^2$ on $H^2(\partial X_{G}, \mathcal{M}_k)$. As before, Poincar\'e duality 
together with the fact that $\sigma$ reverses the orientation reduce the problem to computing the trace of  $\sigma^0$ on 
$H^0(\partial X_G, \mathcal{M}_k)$ instead. 

The cohomology of the boundary can be expressed as a direct sum of the cohomology of the boundary components, which are 2-tori; 
\begin{equation} \label{cuspdec}
H^0(\partial X_G, \mathcal{M}_k) \simeq \bigoplus_{c} H^0(U_c, \C[\Gamma \backslash G] \otimes E_{k,k})
\end{equation}
where the summation runs over the cusps of $G$ and $U_c$ is the unipotent radical of the stabilizer 
of the cusp $c$. The action of $\sigma^0$ on the left hand side of (\ref{cuspdec}) translates to an action on the right hand side. This 
the action stems from that of complex conjugation $\sigma$ on the cusps, which amounts to inversion in the class group of $K$. If $c$ is a cusp, then $\sigma$ takes $U_c$ to $U_{\sigma(c)}$. If $c \not = \sigma(c)$, then  
$$H^0(U_c,\C[\Gamma \backslash G] \otimes E_{k,k}) \oplus H^0(U_{\sigma(c)},\C[\Gamma \backslash G] \otimes E_{k,k})$$
is a $\sigma^0$-invariant subspace of the right hand side of (\ref{cuspdec}). As $\sigma^0$ takes the basis of the first summand to the basis of the second summand, the trace of $\sigma^0$ on this subspace is 0. Hence we need 
to consider cusps $c$ which are fixed by complex conjugation $\sigma$. As mentioned before, there are $2^{t-1}$ of these.

For the rest of the proof, assume that $c$ is a cusp fixed by complex conjugation $\sigma$. Our goal is to compute the trace of $\sigma^0_{Eis}$ on $H^0(U_c, \C[\Gamma \backslash G] \otimes E_{k,k})$. Recall the action of complex conjugation on the $G$-module $\C[\Gamma \backslash G] \otimes E_{k,k}$ from Section \ref{SL}. As $H^0$ is taking invariants, we have 
$$H^0(U_c, \C[\Gamma \backslash G] \otimes E_{k,k}) \simeq H^0(U_c, \C[\Gamma \backslash G]) \otimes H^0(U_c, E_{k,k}).$$ 
This isomorphism is $\sigma^0$-equivariant. 

We first prove that, without loss of generality, we can take $c$ to be the cusp $\infty$ at infinity. To see this, assume that $c$ is given by $\frac{x}{y}$ with $x,y \in \O$. Then there is a matrix $A=(\begin{smallmatrix} x & s \\ y & t \end{smallmatrix}) \in \sl(K)$ such that $\infty \cdot A=c$. As $c$ is fixed by $\sigma$, 
it corresponds to an element in the ideal class group of $K$ which is of order 2. Work of Smillie and Vogtmann \cite{smilie-vogtmann} shows us that conjugation by $A$ gives rise to an orientation-preserving involutary automorphism $\phi_A$ of $Y_G$ which {\em commutes} with the orientation-reversing involutary automorphism 
of $Y_G$ induced by $\sigma$. The automorphism $\phi_A$ extends to an automorphism of $X_G$, inducing an isomorphism (still denoted $\phi_A$)
between the boundary component 2-torus at the cusp $\infty$ and the boundary component 2-torus at the cusp $c$. The isomorphism $\phi_A$ commutes with the automorphisms induced by $\sigma$ on these two boundary components. It follows that the trace of $\sigma$ on $H^0(U_{\infty},\C[\Gamma \backslash G] \otimes E_{k,k})$ is equal to its trace on $H^0(U_c,\C[\Gamma \backslash G] \otimes E_{k,k})$.

So let us assume that $c=\infty$ below.
It is easy to show that $E_{k,k}^{U_c}$ is a one-dimensional space that is generated by $X^k \otimes \bar{X}^k$. 
Clearly $\sigma$ acts as identity on this space. Therefore the trace of $\sigma_{Eis}^0$ on $H^0(U_c, E_{k,k})$ is $1$. 
Our discussion so far already shows that the trace of $\sigma_{Eis}^2$ on $H^2_{Eis}(G,E_{k,k})$ is equal to  $-2^{t-1}$.  

Now let us move on to deal with $H^0(U_c, \C[\Gamma \backslash G])$. By the above paragraph, we see that the dimension of 
$H^0(U_c,\C[\Gamma \backslash G])$ is equal to $c(\Gamma) / h$ where $c(\Gamma)$ is the number of cusps of $\Gamma$ and $h$ is the class number of $K$. 

We will first work in the {\em special situation} where $N=p^n$ with $p$ a rational prime that is unramified in $K$. For convenience put  $\Gamma=\Gamma(p^n)$ and $R=\O / (p^n)$. The coset space $\Gamma \backslash G$ can be identified with the finite group $\sl(R)$. Let $U_c(R)$ denote the image of $U_c$ inside $\sl(R)$ under the reduction modulo $(p^n)$ map. The action of $U_c$ on $\sl(R)$ is the right regular action of $U_c(R)$ on  $\sl(R)$. Since our $p$ is unramified in $K$, we can choose $x,y \in \O$ prime to $p$ and thus $\bar{c}:=(\bar{x}:\bar{y})$ gives an element of the projective line $\P^1(R)$ over the ring $R$, for a definition see \cite[p.281]{cremona}. We have $U_c(R)=U_{\bar{c}}$, the unipotent radical of the stabilizer of $\bar{c}$ in 
$\sl(R)$.

First assume that $p$ {\em splits} in $K$, that is $(p)=\p \bar{\p}$. Clearly $\sigma$ acts on 
$$\sl(R) \simeq \sl(\O/ \p^n) \times \sl(\O/ \bar{\p}^n)$$
by swapping the coordinates, that is, 
$\sigma (X,Y) = (Y,X)$ for every $(X,Y) \in \sl(\O/ \p^n) \times \sl(\O/ \bar{\p}^n))$. Observe that, in the obvious notation, we have 
$$\C[\sl(R)]^{U_c(R)} \simeq \C[\sl(\O/ \p^n)]^{U_c(\O/ \p^n)} \otimes \C[\sl(\O/ \bar{\p}^n)]^{U_c(\O/ \bar{\p}^n)}.$$

Let us identify $\O / \p^n$ and $\O / \bar{\p}^n$ with $\Z / p^n \Z$ and simply consider $\C[\sl(\Z / p^n \Z)]^{U_c(\Z / p^n \Z)}$. Put 
$$\theta= \sum_{u \in U_c(\Z / p^n \Z)} u \in \C[\sl(\Z / p^n \Z)].$$ 
Then $\theta$ is clearly invariant under the right regular action of $U_c(\Z / p^n \Z)$. Fix a set $S$ of representatives for cosets of 
$U_c(\Z / p^n \Z)$ in $\sl(\Z / p^n \Z)$. Then for every $s \in S$, the element 
$$\sum_{u \in U_c(\Z / p^n \Z)} su = s\theta \in \C[\sl(\Z / p^n \Z)]$$ 
is also fixed under the action of $U_c(\Z / p^n \Z)$. Dimension considerations show that the set $\mathbb{S}=\{ s \theta \mid s \in S \}$ forms a basis of $\C[\sl(\Z / p^n \Z)]^{U_c(\Z / p^n \Z)}$.  Thus 
the set $\mathbb{S} \times \mathbb{S}$ gives a basis of $\C[\sl(R)]^{U_c}$. Note that this basis is fixed by $\sigma$.

As $\sigma$ stabilizes the basis $\mathbb{S} \times \mathbb{S}$, it follows that the trace of the involution $\sigma^0$ on 
$\C[\sl(R)]^{U_c}$ is equal to the number of elements in $\mathbb{S} \times \mathbb{S}$ which are fixed under $\sigma$ 
(the same principle was in order in the proof of Corollary \ref{cor_serre} as well). Since $\sigma$ acts as  swapping 
coordinates, the set of $\sigma$-fixed elements is the diagonal which is of cardinality 
$\# \mathbb{S}= [\sl(\Z / p^n \Z) : U_c(\Z / p^n \Z)].$
To compute the latter, observe that 
$$[\sl(\Z / p^n \Z) : U_c(\Z / p^n \Z)] = \# \P^1(\Z / p^n \Z) \cdot \#(\Z / p^n \Z)^*=(p^n+p^{n-1})(p^n-p^{n-1})=p^{2n}-p^{2(n-1)}$$
where $(\Z / p^n \Z)^*$ is the group of units in $\Z / p^n \Z$. 
This follows from an investigation of the subgroups $U_c(\Z/ p^n \Z) = U_{\bar{c}} \subset B_{\bar{c}} \subset \sl(\Z / p^n \Z)$ 
where $B_{\bar{c}}$ is the stabilizer of $\bar{c}=(\bar{x}: \bar{y}) \in \P^1(\Z / p^n \Z).$

Let us now deal with the case where $p$ stays inert in $K$, that is $(p)=\p$ is a prime ideal (of norm $p^2$). Computing 
a basis of $H^0(U_c,\C[\sl(R)])$ goes along the above lines. Put 
$$\theta= \sum_{u \in U_c(R)} u \in \C[\sl(R)].$$ 
Fix a set $S$ of representatives for cosets of $U_c(R)$ in $\sl(R)$. 
Then the set $\mathbb{S}=\{ s \theta \mid s \in S \}$ forms a basis of $\C[\sl(R)]^{U(R)}$. 

The action of $\sigma$ on $\sl(R)$ is entry-wise, that is, given a matrix in $\sl(R)$, $\sigma$ acts on its entries. So we need to consider the $\sigma$-action on $R$. The easiest way to make this action concrete is to work with a set of representatives 
$T$ in $\O$ which biject onto $R$ when reduced modulo $(p)^n$. In our case, such a set $T$ is given by 
$$\{ a+b \cdot \omega \mid 0 \leq a,b \leq p^n-1 \} \subset \O$$
where $\omega$ is the standard generator $\O = \Z + \Z \omega$. The action of $\sigma$ on $R$ descends from the action of $\sigma$ 
on $T$: $a+b \cdot \omega \mapsto a + b \cdot {}^{\sigma}\omega$. As above, we want to compute the number of elements in 
$\mathbb{S}$ which are fixed under $\sigma$. 

As $U_c$ is stable under $\sigma$ (recall that $c=\infty$), its reduction $U_c(R)$ is stable under the action of $\sigma$ on $\sl(R)$, hence $\sigma$ acts on the coset 
space of $U(R)$ in $\sl(R)$. As the element $\theta$ is fixed under $\sigma$, we need to compute the number of $s$ in (some fixed) $S$ 
which are fixed by $\sigma$. This is the same as counting the $\sigma$-fixed elements in the coset space. 
As implicitly used above, the coset space of $U_c(R)$ in $\sl(R)$ can be ``identified" with $\P^1(R) \times R^*$. 
As an indication, let us point out that $\P^1(R)$ is in bijection with the coset space of a Borel 
subgroup $B_c(R)$ in $\sl(R)$ and $R^*$ is in bijection with the coset space of $U_c(R)$ in $B_c(R)$. Under this 
identification, the action of $\sigma$ translates to the natural action $\sigma(x:y)=({}^{\sigma}x : {}^{\sigma}y)$ 
on $\P^1(R)$ and it gives the usual action on $R^*$. Now the $\sigma$-fixed elements of the coset space 
of $U_c(R)$ in $\sl(R)$ can be identified with the product $\P^1(R)^{\sigma} \times (R^*)^{\sigma}$ 
of $\sigma$-fixed elements of $P^1(R)$ and $R^*$. One can check that $\P^1(R)^{\sigma}=\P^1(R^{\sigma}) \simeq \P^1(\Z / p^n \Z)$ 
which has cardinality $p^n+p^{n-1}$. The set $R^*$ can be identified with $\{ a+b \cdot \omega \in T \mid p\nmid a \ \& \ p \nmid b \}$. 
Hence $(R^*)^{\sigma}$ is given by $\{ a+b \cdot \omega \in T \mid p \nmid a, \ \  b=0 \}$ which is of cardinality $p^n-p^{n-1}$. 
Multiplying these two cardinalities gives us the desired quantity $p^{2n}-p^{2(n-1)}$.
 
Now let us assume that $N=p_1^{n_1} \hdots p_r^{n_r}$ is positive number whose prime divisors $p_i$ are unramified in $K$. The general result follows from the simple fact that  
$$\sl(\O / (N)) \simeq \sl(\O / (p_1)^{n_1}) \times \hdots \sl(\O / (p_r)^{n_r}).$$

The case $k=0$ follows from the basic observations that were employed at the end of the proof of Corollary \ref{cor_serre}.
\end{proof}

Now let us compute the trace of $\tau^2_{Eis}$. Recall that $\tau$ can be regarded as the composition $\alpha \circ \sigma = \sigma \circ \alpha$ 
where $\alpha(\begin{smallmatrix} a&b \\ c&d \end{smallmatrix} )=
( \begin{smallmatrix} -1& 0 \\ 0&1 \end{smallmatrix}) (\begin{smallmatrix} a&b \\ c&d \end{smallmatrix})
( \begin{smallmatrix} -1& 0 \\ 0&1 \end{smallmatrix})$ for 
every $(\begin{smallmatrix} a&b \\ c&d \end{smallmatrix}) \in \sl(\C)$ and $\alpha (z,r) = (-z,r)$ for every $(z,r) \in \mathbb{H}$.  

\begin{theorem} \label{trace2_eisenstein_tau} 
 Let $K$ be an imaginary quadratic field and $t$ be the number of rational 
primes ramifying in. Let $N=p_1^{n_1} \hdots p_r^{n_r}$ be a positive number whose prime divisors $p_i$ are unramified in $K$ and 
let $\Gamma(N)$ be the principal congruence subgroup of the Bianchi group $\sl(\O)$ of level $(N)$. 
Then
$$\tr(\tau \mid H^2_{Eis}(\Gamma(N),E_{k,k}))=-2^{t-1} \cdot \prod_{i=1}^r (p_i^{2n_i-1}-p_i^{2n_i-2}) +\delta(0,k),$$
where $\delta$ is the Kronecker $\delta$-function, in other words, $\delta(0,k)=0$ unless $k=0$ in which case $\delta(0,k)=1$.
In particular, the trace of $\tau$ on $H^2_{Eis}(\sl(\O),E_{k,k})$ is
$$-2^{t-1}+\delta(0,k).$$
\end{theorem}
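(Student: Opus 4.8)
The plan is to mirror the structure of the proof of Theorem~\ref{trace2_eisenstein} (the $\sigma$ case), modifying only the two places where $\tau$ differs from $\sigma$: the action on the set of cusps and the action on the ring $R = \O/(p^n)$ via the entry-wise twist by $\alpha$. As in that proof, assume $k>0$ until the very end. First I would invoke Section~\ref{SL} to reduce to computing $\tr(\tau^2)$ on $H^2_{Eis}(G, \C[\Gamma\backslash G]\otimes E_{k,k})$, then use that the restriction map $H^2(X_G,\mathcal{M}_k)\to H^2(\partial X_G,\mathcal{M}_k)$ is onto for $k>0$, and finally apply Poincar\'e duality together with the orientation-reversing property of $\tau$ to reduce to computing $\tr(\tau^0)$ on $H^0(\partial X_G,\mathcal{M}_k)$. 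The boundary cohomology decomposes as in~(\ref{cuspdec}) over the cusps, and since $\tau = \alpha\circ\sigma$ acts on cusps through $\sigma$ (the automorphism $\alpha$ is inner on $\sl$ and fixes $\infty$), the same cancellation argument shows that only the $2^{t-1}$ cusps fixed by complex conjugation contribute, and we may again reduce to the cusp $c=\infty$ using the $\phi_A$-conjugation argument.

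\emph{The contribution from $E_{k,k}$ and the splitting over prime powers carry over verbatim.} The space $E_{k,k}^{U_c}$ is again one-dimensional, generated by $X^k\otimes\bar X^k$, and one checks directly from the formula $\tau(P\otimes Q) = (\begin{smallmatrix} -1&0\\0&1\end{smallmatrix})Q\otimes(\begin{smallmatrix} -1&0\\0&1\end{smallmatrix})P$ that $\tau$ acts on $X^k\otimes\bar X^k$ by the scalar $(-1)^{2k}=1$, so the $E_{k,k}$ factor again contributes a trace of $1$. Likewise the Chinese Remainder factorization $\sl(\O/(N))\simeq\prod_i\sl(\O/(p_i)^{n_i})$ reduces the general case to the prime-power case $N=p^n$, so the whole computation again localizes to counting $\tau$-fixed cosets of $U_c(R)$ in $\sl(R)$, which identifies with $\P^1(R)^\tau\times(R^*)^\tau$.

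\emph{The one genuinely new calculation is the count of $\tau$-fixed points on $\P^1(R)$ and $R^*$,} and this is where the exponent changes from $p^{2n_i}-p^{2n_i-2}$ to $p^{2n_i-1}-p^{2n_i-2}$. In the split case $(p)=\p\bar\p$, one identifies $R\simeq (\Z/p^n\Z)\times(\Z/p^n\Z)$; since $\sigma$ swaps coordinates and $\alpha$ is a fixed inner conjugation, the composite $\tau$ acts as the swap followed by this conjugation, and I would count the fixed diagonal exactly as before but with the twist taken into account. In the inert case $(p)=\p$, the action of $\tau$ on $R$ descends from $a+b\omega\mapsto a - b\,{}^\sigma\omega$ on the representatives $T$ (the extra sign coming from $\alpha$); the key point will be to recompute $(R^*)^\tau$ and $\P^1(R)^\tau$ under this twisted involution. \textbf{I expect the main obstacle to be the careful bookkeeping of how the sign twist from $\alpha$ interacts with the Galois action $\sigma$ on $R$ and hence on $\P^1(R)$ and $R^*$,} since it is precisely this interaction that shaves one power of $p$ off the $\P^1$-count (yielding $p^{n-1}$-type cardinalities where the untwisted count gave $p^n+p^{n-1}$). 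Once the two fixed-point cardinalities are pinned down and multiplied, the product over the $p_i$ and the final $k=0$ case (handled exactly as at the end of the proof of Corollary~\ref{cor_serre}) complete the argument.
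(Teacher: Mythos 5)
Your reduction steps faithfully mirror the paper's proof of Theorem \ref{trace2_eisenstein}, and they are all correct: the passage to $H^0(\partial X_G,\mathcal{M}_k)$ via surjectivity of restriction and Poincar\'e duality, the observation that $\alpha$ fixes the cusps so that only the $2^{t-1}$ cusps fixed by $\sigma$ contribute, the reduction to $c=\infty$, the trace $1$ on $E_{k,k}^{U_c}$ via $(-1)^{2k}$, and the Chinese Remainder reduction to $N=p^n$ are exactly what the paper does. However, the one computation that actually distinguishes this theorem from the $\sigma$ case --- the count of $\tau$-fixed basis elements of $\C[\sl(R)]^{U_c(R)}$, which is where $p^{2n}-p^{2n-2}$ becomes $p^{2n-1}-p^{2n-2}$ --- is precisely the step you defer (``I expect the main obstacle to be...''). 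Since everything else carries over verbatim, leaving this count undone means the new content of the theorem is not actually proved.

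Moreover, the one concrete formula you do offer for that step is incorrect. In the inert case you describe the $\tau$-action on $R$ as descending from $a+b\omega\mapsto a-b\,{}^{\sigma}\omega$ on representatives. But $\alpha$ is conjugation by $\beta=(\begin{smallmatrix}-1&0\\0&1\end{smallmatrix})$, which negates the \emph{off-diagonal matrix entries} of an element of $\sl(R)$; it is not a ring map twisting the $\omega$-component of elements of $R$, and indeed $\tau$ does not act entry-wise on $R$ by a single formula (diagonal entries see $x\mapsto{}^{\sigma}x$, off-diagonal entries see $x\mapsto-{}^{\sigma}x$). What the paper actually does is keep the identification of the coset space $U_c(R)\backslash\sl(R)$ with $\P^1(R)\times R^{*}$ from the $\sigma$-proof and work out how $A\mapsto\beta\,{}^{\sigma}\!A\,\beta$ acts on it: in the inert case the fixed set becomes $\{(0:\ast)\}\times(R^{\sigma})^{*}$ of cardinality $p^{n-1}(p^{n}-p^{n-1})$, the sign twist shrinking the $\P^1$-factor from $p^{n}+p^{n-1}$ to $p^{n-1}$; in the split case, where $\tau(X,Y)=(\beta Y\beta,\beta X\beta)$, the fixed basis elements are likewise in bijection with $\{(0:\ast)\}\times(\Z/p^{n}\Z)^{*}$, again of cardinality $p^{2n-1}-p^{2n-2}$. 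You would need to carry out this fixed-point count explicitly (with the correct description of the twist) for your argument to be complete.
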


\begin{proof} We will follow the proof for $\sigma^2_{Eis}$ closely. As before, for convenience put $G=\sl(\O)$ and assume that $k>0$ 
until the very end. First, observe that $\alpha$ fixes the cusps of $G$, giving that the action of $\tau$ on the cusps of $G$ is the same as that of complex conjugation $\sigma$. 

As in the previous proof, we end up having to compute the trace of the action of $\tau^0$ on $H^0(U_c, \C[\Gamma \backslash G] \otimes E_{k,k})$ for cusps $c$ which are fixed under complex conjugation (and hence $\tau$). Using the same argument, we can (and we do) assume that $c=\infty$ below. 
As before, we have 
$$H^0(U_c, \C[\Gamma \backslash G] \otimes E_{k,k}) \simeq H^0(U_c, \C[\Gamma \backslash G]) \otimes H^0(U_c, E_{k,k}).$$ 
This isomorphism is $\tau^0$-equivarient. 

As $\tau$ sends $X^k \otimes \bar{X}^k$ to $(-X)^k \otimes (-\bar{X})^k = (-1)^{2k}(X^k \otimes \bar{X}^k) $, the trace of $\tau^0$ on $H^0(U_c, E_{k,k})$ is $1$. This shows that the trace of $\tau_{Eis}^2$ on $H^2_{Eis}(G,E_{k,k})$ is equal to  $-2^{t-1}$.  

Assume that $N=p^n$ with $p$ a rational prime that is unramified in $K$. 
For convenience put  $\Gamma=\Gamma(p^n)$ and $R=\O / (p)^n$. Identify $\Gamma \backslash G$ with $\sl(R)$. 

First assume that $p$ splits in $K$, that is $(p)=\p \bar{\p}$. The action of $\tau$ on  
$\sl(R)=\sl(\Z / p^n \Z) \times \sl(\Z / p^n \Z)$ is as follows, $\tau (X,Y) = (\beta Y\beta,\beta X \beta)$ for every 
$(X,Y) \in \sl(\Z / p^n \Z) \times \sl(\Z / p^n \Z)$.  In the proof of the previous theorem, we described a set $\overline{S}$ such that $\overline{S} \times \overline{S}$ forms a basis of the space $H^0(U_c,\C[\sl(R)])$. The set $\overline{S}$ can be identified with 
$\P^1(\Z / p^n \Z) \times (\Z / p^n \Z)^*$. One sees then that $\overline{S} \times \overline{S}$ is fixed by $\tau$ and thus  
to compute the desired trace, it is enough to compute the elements in this basis which are fixed by $\tau$. Such element are described as 
$(X,X) \in  \overline{S} \times \overline{S}$ with $X \in \overline{S}$ diagonal. The diagonal elements in $\overline{S}$ are in bijection with 
the set $\{ (0:*) \} \times (\Z / p^n \Z)$ which has cardinality $p^{n-1}(p^n-p^{n-1})$. 

Let us now consider the case where $p$ is inert in $K$. The action of $\tau$ on the coset space $\sl(R)$ is 
described as $\tau(A)=\beta \sigma(A) \beta$, where the action of $\sigma$ was given in the proof of the previous theorem. 
In the same proof, we computed a basis $\overline{S}$ of $H^0(U_c,\C[\sl(R)])$ for the case where $p$ is inert. 
The set $\overline{S}$ can be identified with the set $\P^1(R) \times (R)^*$ and we see that it is fixed by $\tau$. 
The elements of $\overline{S}$ which are fixed under $\tau$ are given by 
$\{ (0,*) \} \times (R^{\sigma})^* \subset \P^1(R^{\sigma}) \times (R^{\sigma})^*$ which 
has cardinality $(p^{n-1})(p^n-p^{n-1})=p^{2n-1}-p^{2n-2}$. 

The general cases follow from the same steps that were taken in the proof of the previous lemma.
\end{proof}

\subsection{Trace on $H^1_{Eis}$}

In this subsection, we will compute the trace of $\sigma$ on $H^1_{Eis}(\Gamma,\C)$. Our strategy is to use the explicit 1-cocycles defined by Sczech in \cite{sczech}
which produce a basis for $H^1_{Eis}(\Gamma(N),\C)$. 

Consider $\O$ as a lattice in $\C$. For $k=0,1,2$ and $u \in \C$ put 
$$E_k(u) = E_k(u,\O) = \sum'_{w \in \O} (w+u)^{-k} |w+u|^{-s} \mid_{s=0}$$
where $\hdots \mid_{s=0}$ means that the value is defined by analytic continuation to $s=0$. Moreover define $E(u)$ by
setting
$$2 E(u) = \begin{cases} 2 E_2(0), \ \ \ \ \ \ \ \ \ \  \ \ u \in \O \\ \wp(u)-E_1(u)^2, \ \ \ u \not \in \O \end{cases}$$
where $\wp(u)$ denotes the Weierstrass $\wp$-function. 

Let $N$ be a positive integer. Given $u,v \in \frac{1}{N}\O$, Sczech forms homomorphisms
$$\Psi(u,v) : \Gamma(N) \rightarrow \C$$
which depend only on the classes of $u$ and $v$ in $\frac{1}{N}\O / \O$. For 
$A=(\begin{smallmatrix} a & b \\ 0 & d \end{smallmatrix} ) \in \Gamma(N)$, we have the simple description 
$$\Psi(u,v)(A)= - \left ( \frac{\bar{b}}{d}\right ) E(u) - \frac{b}{d}E_0(u)E_2(v)$$
where 
$$ \left ( \frac{t}{s}\right ) = -1 + \# \{ y \mod s\O \mid y^2 \equiv t \mod s\O \}$$
is the Legendre symbol. For non-parabolic $A \in \Gamma(N)$ there is a similar but more complicated description which uses finite sums that involve the $E_k$'s, generalizing the classical Dedekind sums. 

It is shown by Sczech that the collection $\Psi(u,v)$ with $(u,v) \in (\frac{1}{N}\O / \O)^2$ live in the Eisenstein part of 
the cohomology and that the number of linearly independent such homomorphisms is equal to the number of cusps
of $\Gamma$. Thus they generate $H^1_{Eis}(\Gamma(N),\C)$. 

Ito showed in \cite{ito1} that, see also Weselmann \cite{weselmann}, up to a coboundary, the cocycles of Sczech are integrals of closed harmonic differential forms given 
by certain Eisenstein series defined on the hyperbolic 3-space $\mathbb{H}$. Following Ito, we can form an Eisenstein series $E_{(u,v)}(\tau,s)$ for $(\tau,s) \in \mathbb{H}\times \C$ with values in $\C^3$ associated to each cusp of $\Gamma(N)$.  As a function of $s$, $E_{(u,v)}(\tau,s)$ can be analytically continued to 
whole $\C$ and work of Harder \cite{harder-79} shows that differential 1-form on the hyperbolic 3-space induced by $E_{(u,v)}(\tau,s)$ is closed for
$s=0$. Ito showed that the cocycle given by the integral of this closed differential 1-form differs from the cocycle $\Psi(u,v)$ 
of Sczech by a coboundary. The fact that the above Eisenstein series associated to different cusps are linearly independent 
(they are non-vanishing only at their associated cusp) implies that the cohomology classes of Sczech cocycles which are associated to the cusps of $\Gamma(N)$ form a {\em basis} of $H^1_{Eis}(\Gamma(N),\C)$. 

In another paper \cite{ito2}, Ito provides us the following results:
$$\Psi(0,0)(\bar{A})=-\Psi(0,0)(A)$$
where bar means that we take the complex conjugates of the entries of the matrix $A$. More generally,  he proves that
$$\Psi(u,v)(\bar{A})= \dfrac{-1}{N^2}\sum_{s,t \in \frac{1}{N}\O / \O} 
\phi ( s\bar{v}-t\bar{u}) \Psi(s,t)(A)$$
where $\phi(z):=exp(2\pi i (z-\bar{z})/ D)$ with $D$ denoting the discriminant of $K$. 
Observe that when $(s,t)=(u,v)$ or $(s,t)=(0,0)$, we have $\phi(s\bar{v}-t\bar{u})=1$. 
Using this, let us write this summation in a more suggestive way: 
$$\Psi(u,v)(\bar{A})= \dfrac{-1}{N^2} \Bigg [ \bigg ( \sum_{\substack{s,t \in \frac{1}{N}\O / \O \\ (s,t) \not= (u,v) \\ (s,t) \not= (0,0)}} 
\phi( s\bar{v}-t\bar{u}) \Psi(s,t)(A)\bigg ) + \Psi(u,v)(A) + \Psi(0,0)(A) \Bigg ]$$

The latter formula sheds light onto the action of complex conjugation $\sigma$ on the Sczech cocycles which is given by
$$\sigma(\Psi(u,v))(A):=\Psi(u,v)(\bar{A}).$$
We see that $\sigma(\Psi(u,v))$ is expressed as summation over {\em all} the Sczech cocycles. We will 
regard $\sigma$ as a linear operator on the formal space $\C[\Psi_N]$ for which the Sczech cocycles are taken as basis. 

The pair $(0,0)$ in $(\frac{1}{N}\O / \O)^2$ never corresponds to a cusp of $\Gamma(N)$, so let us eliminate 
the term $\Psi(0,0)$ from the big summation. Using Ito's summation formula for the case $(u,v)=(0,0)$, we get 
$$\Psi(0,0)(\bar{A})=\dfrac{-1}{N^2} \Bigg [ \bigg ( \sum_{\substack{s,t \in \frac{1}{N}\O / \O \\ (s,t) \not= (0,0)}} 
\Psi(s,t)(A)\bigg )+ \Psi(0,0)(A) \Bigg ]$$
Now plug in the identity $\Psi(0,0)(\bar{A})=-\Psi(0,0)(A)$, we get 
$$\Psi(0,0)(A) = \dfrac{1}{N^2-1}\sum_{\substack{s,t \in \frac{1}{N}\O / \O \\ (s,t) \not= (0,0)}} \Psi(s,t)(A).$$
Now for $(u,v)\not=(0,0)$, we have
$$\Psi(u,v)(\bar{A})= \dfrac{-1}{N^2} \Bigg [ \bigg ( \sum_{\substack{s,t \in \frac{1}{N}\O / \O \\ (s,t) \not= (0,0)}} 
\phi( s\bar{v}-t\bar{u}) \Psi(s,t)(A)\bigg )+ \Psi(0,0)(A) \Bigg ].$$
Substitute the term $\Psi(0,0)(A)$, we get
$$\Psi(u,v)(\bar{A})=\dfrac{-1}{(N^2)(N^2-1)}\sum_{\substack{s,t \in \frac{1}{N}\O / \O \\ (s,t) \not= (0,0)}} \Psi(s,t)(A) + 
\dfrac{-1}{N^2} \sum_{\substack{s,t \in \frac{1}{N}\O / \O \\ (s,t) \not= (0,0)}} 
\phi( s\bar{v}-t\bar{u}) \Psi(s,t)(A).$$
Having eliminated $\Psi(0,0)$, we can regard $\sigma$ as a linear operator on the formal 
space $\C[\Psi^*]$ for which all Sczech cocyles except $\Psi(0,0)$ are taken as basis.
We see that the coefficient of the summand $\Psi(u,v)(A)$ on the right hand side of the equality is 
$$\frac{1}{(N^2)(N^2-1)}+\frac{-1}{N^2}=\frac{-1}{N^2-1}.$$
This implies that the trace of $\sigma$ on $\C[\Psi_N^*]$ is 
\begin{equation} \label{sigmatraceformula}
(N^4-1) \frac{-1}{N^2-1}= -(N^2+1).
\end{equation}

\vspace{.1 in}

Our goal is to apply the above results to the computation of the trace of $\sigma$ on $H^1_{Eis}(\Gamma(p^n),\C)$ 
for some rational prime $p$ which is unramified in $K$. 
We were able to do this only when $K$ is of class number one and $p$ is inert in $K$.  
In this case, the cusps of $\Gamma(p^n)$ are in bijection with the elements $(\bar{x},\bar{y})$ of order $p^{2n}$ in 
$(\O / (p^n))^2$ via the map $\frac{x}{y} \mapsto (y,-x)$. 
It follows from Equation (\ref{cuspformula}) that $c(\Gamma(p^n))=p^{4n}(1-p^{-4})=(p^{2n})^2-(p^{2n-2})^2$. 

In the rest of this subsection, we will prove the following result which is a partial generalization of a result announced by Harder in \cite{harder-75}.
\begin{theorem} \label{trace1_eisenstein}
Assume that $K$ is of class number one and let $p$ be a rational prime that is inert in $K$. Then we have 
$$ \tr(\sigma\mid H^1_{Eis}(\Gamma(p^n),\C)) = \begin{cases} -(p^2+1), \ \ \ \ \ \ \ \ \ \ \textrm{if} \ n=1 \\ 
                                                                              -(p^{2n}-p^{2n-2}), \ \ \ \textrm{if} \ n>1. \end{cases}$$ 
\end{theorem}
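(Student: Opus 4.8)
The plan is to exploit the $\sigma$-equivariant surjection from the formal cocycle space $\C[\Psi_N^*]$ (with $N=p^n$) onto the genuine Eisenstein cohomology. Since the Sczech cocycles $\Psi(u,v)$, $(u,v)\neq(0,0)$, span $H^1_{Eis}(\Gamma(p^n),\C)$ and the conjugation action of $\sigma$ on the formal basis is defined to agree with its action on the cohomology classes, the tautological map
$$q\colon \C[\Psi_N^*]\longrightarrow H^1_{Eis}(\Gamma(p^n),\C),\qquad \Psi(u,v)\mapsto[\Psi(u,v)],$$
is a $\sigma$-equivariant surjection. Writing $K=\ker q$ for the space of relations, it is $\sigma$-stable, so $\sigma|_K$ is again an involution with integral trace, and additivity of the trace on the short exact sequence $0\to K\to\C[\Psi_N^*]\to H^1_{Eis}\to 0$ gives, using \eqref{sigmatraceformula},
$$\tr\big(\sigma\mid H^1_{Eis}(\Gamma(p^n),\C)\big)=\tr\big(\sigma\mid\C[\Psi_N^*]\big)-\tr(\sigma\mid K)=-(p^{2n}+1)-\tr(\sigma\mid K).$$
The whole problem therefore reduces to computing $\tr(\sigma\mid K)$.

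For the base case $n=1$, the hypothesis that $p$ is inert makes $\O/(p)\cong\F_{p^2}$ a field, so every nonzero pair in $(\O/(p))^2$ is primitive and thus corresponds to a cusp. Hence all $p^4-1$ cocycles $\Psi(u,v)$ with $(u,v)\neq(0,0)$ are cusp cocycles; by the results of Sczech and Ito quoted above they are linearly independent in cohomology, so $q$ is an isomorphism and $K=0$. This yields $\tr(\sigma\mid H^1_{Eis}(\Gamma(p),\C))=-(p^2+1)$, the $n=1$ assertion.

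For $n>1$ the ring $\O/(p^n)$ is local with non-units, and the \emph{imprimitive} pairs, namely those $(u,v)$ with $u,v\in\tfrac1{p^{n-1}}\O/\O$, are exactly the $p^{4n-4}-1$ nonzero pairs that fail to be cusps; by \eqref{cuspformula} they account precisely for $\dim K=(p^{4n}-1)-c(\Gamma(p^n))=p^{4n-4}-1$. The structural point I would use is that Sczech's formula for $\Psi(u,v)$ depends on $(u,v)$ only through the Eisenstein--Kronecker quantities $E(u),E_0(u),E_2(v)$, not on the level; consequently, for an imprimitive pair $(u,v)\in(\tfrac1{p^{n-1}}\O/\O)^2$ the cocycle $\Psi(u,v)$ on $\Gamma(p^n)$ is literally the restriction of the corresponding Sczech cocycle on $\Gamma(p^{n-1})$. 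Since restriction $H^1(\Gamma(p^{n-1}),\C)\to H^1(\Gamma(p^n),\C)$ is injective (transfer over $\C$) and $\sigma$-equivariant, the relations $K$ at level $p^n$ should, as a $\sigma$-module, be identified with the formal space $\C[\Psi^*_{p^{n-1}}]$ at level $p^{n-1}$, the underlying vector-space identification being projection onto the imprimitive coordinates combined with the tautological bijection of imprimitive level-$p^n$ pairs with nonzero level-$p^{n-1}$ pairs. Granting this, \eqref{sigmatraceformula} applied at level $p^{n-1}$ gives $\tr(\sigma\mid K)=-(p^{2n-2}+1)$, and substituting yields
$$\tr\big(\sigma\mid H^1_{Eis}(\Gamma(p^n),\C)\big)=-(p^{2n}+1)+(p^{2n-2}+1)=-(p^{2n}-p^{2n-2}).$$
Note that the same computation recovers the base case, since at level $p^0$ the formal space is empty and contributes trace $0$.

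The main obstacle is the $\sigma$-module identification of $K$ in the inductive step, and the difficulty is genuine: $\sigma$ does \emph{not} preserve the span of the imprimitive cocycles. Indeed, by Ito's formula the coefficient of a primitive (cusp) cocycle $\Psi(s,t)$ in $\sigma\Psi(u,v)$ equals $-N^{-2}\big(\tfrac1{N^2-1}+\phi(s\bar v-t\bar u)\big)$, which never vanishes because $\phi$ is a root of unity while $\tfrac1{N^2-1}<1$. Hence one cannot read $\tr(\sigma\mid K)$ off the diagonal of the conjugation matrix; one must instead control how each imprimitive class decomposes in the primitive basis, and verify that this decomposition intertwines the level-$p^n$ and level-$p^{n-1}$ conjugation actions. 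This is exactly the content of the distribution relations satisfied by the series $E_k$, and making those relations explicit — where the inert, class-number-one hypotheses keep the combinatorics tractable — is the technical heart of the argument.
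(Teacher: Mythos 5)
Your proposal follows the paper's proof essentially step for step: the paper also proceeds by induction, also uses the trace $-(N^2+1)$ on the formal span of the nonzero Sczech cocycles from Equation (\ref{sigmatraceformula}), also handles $n=1$ by noting that (since $p$ is inert and $h=1$) every nonzero pair is primitive, so that the cocycles $\Psi(u,v)$, $(u,v)\neq(0,0)$, already form a basis of $H^1_{Eis}(\Gamma(p),\C)$, and for $n>1$ also obtains the answer as the telescoped difference $-(p^{2n}+1)+(p^{2n-2}+1)$. The step you explicitly leave open --- that $\tr(\sigma\mid K)=\tr\big(\sigma\mid\C[\Psi^*_{p^{n-1}}]\big)=-(p^{2n-2}+1)$ --- is exactly the step the paper settles with its ``crucial observation'': the cusps of $\Gamma(p^n)$ correspond precisely to the pairs outside the image of $\big(\frac{1}{p^{n-1}}\O/\O\big)^2$, the remaining (imprimitive) cocycles are literally the level-$p^{n-1}$ Sczech cocycles restricted to $\Gamma(p^n)$, and hence the sought trace is the difference of the traces on $\C[\Psi^*_{p^n}]$ and $\C[\Psi^*_{p^{n-1}}]$. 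The paper does not spell out the $\sigma$-module identification of the space of relations any further than you do; your additivity-of-trace framing via the short exact sequence $0\to K\to\C[\Psi^*_{p^n}]\to H^1_{Eis}\to 0$ is in fact a cleaner way of stating what the paper's ``hence'' is implicitly using.

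Your closing observation --- that Ito's conjugation matrix is not block-triangular with respect to the primitive/imprimitive decomposition, because the coefficient $\frac{-1}{N^2}\big(\frac{1}{N^2-1}+\phi(s\bar v-t\bar u)\big)$ never vanishes --- is correct, and it accurately identifies why the inductive step cannot be read off the diagonal of the level-$p^n$ matrix. The ingredient that closes this (and that the paper leaves implicit) is that for an imprimitive pair the level-$p^{n-1}$ instance of Ito's formula gives a second, equally valid expression for $\sigma\Psi(u,v)$ as a cocycle on $\Gamma(p^n)$, involving only imprimitive cocycles; together with the injectivity and $\sigma$-equivariance of the restriction map $H^1_{Eis}(\Gamma(p^{n-1}),\C)\to H^1_{Eis}(\Gamma(p^n),\C)$, this is what lets one evaluate the trace on the relation space by the level-$p^{n-1}$ computation. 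So: same route as the paper, identical final formulas, and you have correctly located the one point at which both your write-up and the paper's rely on an identification that is asserted rather than worked out in detail.
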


\begin{proof} We will proceed by induction. Let $n=1$. Then by a comparison with the number of cusps of $\Gamma(p)$, we see that 
the Sczech cocyles, excluding $\Psi(0,0)$, form a basis of $H^1_{Eis}(\Gamma(p),\C)$. Thus the trace if $\sigma$ on $\C[\Psi^*]$ 
is equal to the trace of $\sigma$ on $H^1_{Eis}(\Gamma(p),\C)$. By our observation above, we get the claim for $n=1$. 
 
Before we proceed with the inductive step, let us discuss the structure of cusps. The following diagram is commutative.
$$ \xymatrix{ \O/ (p) \ar[r]^{\varepsilon} & \O / (p^2) \ar[r]^{\varepsilon} & \O/ (p^3) \ar[r]^{\varepsilon} & \hdots \\ 
                   \frac{1}{p}\O / \O \ar[u] \ar[r]^{\varepsilon'} & \frac{1}{p^2}\O / \O \ar[u] \ar[r]^{\varepsilon'} & \frac{1}{p^3}\O / \O \ar[u] \ar[r]^{\varepsilon'} & \hdots}$$
The maps $\varepsilon$ are the natural inclusion maps $[x] \mapsto [px]$. the vertical arrows are the natural bijections 
that we mentioned above and the maps $\varepsilon'$ are induced by the natural inclusions $ \frac{1}{p}\O \subset  \frac{1}{p^2}\O$. 
The crucial observation is that the set of elements of order $p^{2n}$ in $(\O/ (p^n))^2$  is exactly $(\O/ (p^n))^2 \backslash (\varepsilon(\O / (p^{n-1})))^2$. 
Hence in order to find the trace of $\sigma$ on the Sczech cocyles which are associated to the cusps of $\Gamma(p^n)$, 
all we need to do is to compute the difference between the traces of $\sigma$ on $\C[\Psi_{p^n}]$ 
and $\C[\Psi_{p^{n-1}}]$. This is the same as the difference between the traces of $\sigma$ on $\C[\Psi_{p^n}^*]$ 
and $\C[\Psi_{p^{n-1}}^*]$ which we already computed in Equation \ref{sigmatraceformula}:
$$-(p^{2n}+1)-(-(p^{2n-2}+1))=-(p^{2n}-p^{2n-2})$$
as claimed.
\end{proof}

\begin{remark} 
\begin{enumerate} 
\item The above proof does not carry over to the case where $p$ is split. To see this, put $(p)=\p \bar{\p}$. Then 
the set of cusps of $\Gamma(p^n)$, which has cardinality $(p^{2n}(1-p^{-2}))^2$, is in bijection with the Cartesian 
product of the set of cusps of $\Gamma(\p^n)$ and the set of cusps 
of $\Gamma(\bar{\p}^n)$. Both of the latter sets are in bijection with the set of elements of order $p^n$ of $\Z / p^n \Z$. 
In a very similar way to the one in the proof, we have a commutative diagram 
$$ \xymatrix{ \hdots \ar[r] & (\O/\p^{n-1})^2 \times  (\O/\bar{\p}^{n-1})^2  \ar[r]^{\ \ \ \ \varepsilon_{\p} \times \varepsilon_{\bar{\p}} } &  
                   (\O/\p^{n})^2 \times  (\O/\bar{\p}^{n})^2 \ar[r] &  \hdots \\ 
                   \hdots \ar[r] & (\frac{1}{p^{n-1}}\O / \O)^2 \ar[u] \ar[r]^{\varepsilon'} & (\frac{1}{p^n}\O / \O)^2 \ar[u] \ar[r] & \hdots}$$
However, unlike in the inert case, the subset of elements in $ (\frac{1}{p^n}\O / \O)^2$ which correspond to the cusps of $\Gamma(p^n)$ is
{\em not} the complement of the image of $(\frac{1}{p^{n-1}}\O / \O)^2$ in $(\frac{1}{p^n}\O / \O)^2$. This obstructs the recursive use 
of Equation \ref{sigmatraceformula} in this case. In fact, the subset of elements in $ (\frac{1}{p^n}\O / \O)^2$ which correspond to the cusps 
of $\Gamma(p^n)$ is given by 
$$\left ( (\O/\p^{n})^2 \backslash \textrm{Im}(\varepsilon_{\p}) \right ) \times \left ( (\O/\bar{\p}^{n})^2 \backslash \textrm{Im}(\varepsilon_{\bar{\p}}) \right ). $$
However we do not see a way to isolate this set in a recursive way.

\item In order to treat $H^1_{Eis}(\Gamma(N), E_{k,k})$, 
vector-values versions of Sczech's cocycles should be developed or Harder's theory of Eisenstein cohomology \cite{harder_eiscoh} should be employed. Since both of the 
options require significant amount of extra work, we do not attempt any of them here.
\end{enumerate}
\end{remark}

These Eisenstein trace results together with Lefschetz number computations of previous sections can be plugged in the formula 
of Proposition \ref{lefschetzlowerbound}, giving {\em explicit} lower bounds for the cuspidal cohomology of Bianchi 
groups. We leave such tasks to the interested reader as the formulas will be quite complicated. In the special case of class number one $K$, 
weight $k=0$ and principle congruence subgroup $\Gamma(p)$ with $p$ inert in $K$, we checked that the explicit lower bound 
that we derive from Proposition \ref{lefschetzlowerbound} agrees with that given by Harder at the end of \cite{harder-75}.


\section{Asymptotic lower bounds} \label{section_asymptotics}

Finding explicit formula for the dimension of $H^1_{cusp}(\Gamma,E_{k,k})$ for  
congruence subgroups $\Gamma$ of a Bianchi group is an important open 
problem in the theory. Recently there has been progress in understanding the asymptotic behaviour of the dimension. 

In the ``horizontal" direction, Finis, Grunewald and Tirao considered in \cite{fgt} the size of 
the cuspidal cohomology of a fixed congruence subgroup $\Gamma$ as the weight $E_{k,k}$ varied. They 
were able to increase the trivial asymptotic upper bound $k^2$ by a factor. In the case of $\Gamma=\sl(O)$, they provided a lower bound that is linear in $k$. A recent result of Marshall in \cite{marshall}, when applied to our situation,  improves the trivial asymptotic upper bound by a power.

\begin{theorem} Let $\Gamma$ be a congruence subgroup of a Bianchi group.
\begin{enumerate}
\item (Finis-Grunewald-Tirao \cite{fgt}) We have 
$$ k \ll \dim H^1_{cusp}(\Gamma, E_{k,k}) \ll \frac{k^2}{\log k}$$
as $k$ increases. The inequality on the left is proven only for the case $\Gamma=\sl(\O)$.
\item (Marshall \cite{marshall}) We have
$$ \dim H^1_{cusp}(\Gamma, E_{k,k}) \ll_{\epsilon} k^{5/3+\epsilon}$$
as $k$ increases. 
\end{enumerate}
\end{theorem}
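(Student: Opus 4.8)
The plan is to treat the three inequalities by three separate mechanisms, all phrased through the Eichler--Shimura--Harder dictionary identifying $H^1_{cusp}(\Gamma, E_{k,k})$ with a space of cuspidal automorphic forms for $\gl/K$ whose archimedean component is cohomological of weight $k$. The lower bound falls out of the machinery already assembled in this paper, while the two upper bounds are genuinely analytic and will be imported from \cite{fgt} and \cite{marshall}; for the latter I would describe the strategy and quote the estimates rather than reprove them.

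For the left-hand inequality $k \ll \dim H^1_{cusp}(\sl(\O), E_{k,k})$ I would apply Proposition \ref{lefschetzlowerbound} with $\Gamma = \sl(\O)$ and $\rho = \sigma$. For $k > 0$ the term $\tr(\sigma^0)$ vanishes since $E_{k,k}$ is an irreducible nontrivial $\Gamma$-module, and by Theorem \ref{thm_trace2eis} together with Corollary \ref{cor_eisensteinbound} the Eisenstein traces $\tr(\sigma^1_{Eis})$ and $\tr(\sigma^2_{Eis})$ are bounded in absolute value by the number of cusps $c(\sl(\O)) = h(K)$, hence are $O(1)$ as $k \to \infty$. On the other hand, Theorem \ref{thm_leftau} exhibits $L(\sigma, \sl(\O), E_{k,k})$ as an explicit constant depending only on $K$ times $(k+1)$, up to terms of lower order in $k$; for $\rho = \sigma$ the leading coefficient is a positive quantity of size comparable to $|D|$ and in particular nonzero. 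Substituting these into Proposition \ref{lefschetzlowerbound} leaves a dominant term linear in $k$, giving $\dim H^1_{cusp}(\sl(\O), E_{k,k}) \gg k$, exactly as in the derivation of Corollary \ref{cor_asym}.

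For the upper bounds I would begin from the trivial estimate $\dim H^1_{cusp}(\Gamma, E_{k,k}) \ll k^2$, which follows by realizing a cuspidal class as a vector-valued cusp form and noting that such a form is pinned down by its Fourier coefficients over a range whose size is comparable to $\dim E_{k,k} = (k+1)^2$. The logarithmic improvement of \cite{fgt} is then obtained by a sharper count of the number of linearly independent such forms, which saves a factor of $\log k$. The power saving of \cite{marshall} is of a different and deeper character: following the method of Calegari--Emerton \cite{calegari-emerton}, one passes to $p$-adically completed cohomology and bounds $\dim H^1(\Gamma, E_{k,k})$ through the structure of the completed cohomology as a module over the relevant Iwasawa algebra, the exponent $5/3$ arising from the strongest available bound on the $p$-adic growth.

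The principal obstacle is the power-saving bound of part (2). It rests on analytic and $p$-adic input --- the Calegari--Emerton theory of completed cohomology and non-trivial estimates for $p$-adic automorphic forms --- which lies entirely outside the topological and Lefschetz-theoretic toolkit of this paper and therefore can only be cited here. By contrast the lower bound is fully self-contained within our framework, and both the trivial quadratic upper bound and its logarithmic refinement require only the passage to Fourier coefficients together with an elementary counting argument.
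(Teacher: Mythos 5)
This statement appears in the paper purely as quoted background: both halves are attributed to \cite{fgt} and \cite{marshall} and the paper supplies no proof, so your decision to import the two upper bounds by citation is exactly what the authors do, and your sketches of those arguments (Fourier-coefficient counting for the trivial $k^2$ bound and its $\log k$ refinement, completed cohomology \`a la Calegari--Emerton for the $k^{5/3+\epsilon}$ bound) should be read as attributions rather than proofs --- they are far too compressed to be checked, and the $\log k$ saving in \cite{fgt} in particular is not honestly captured by ``a sharper count of linearly independent forms.'' Where you genuinely diverge is the lower bound: Finis--Grunewald--Tirao prove $k \ll \dim H^1_{cusp}(\sl(\O),E_{k,k})$ by exhibiting explicit cuspidal classes (base-change and CM contributions), whereas you derive it from Proposition \ref{lefschetzlowerbound} together with the Blume--Nienhaus Lefschetz number of Theorem \ref{thm_leftau} and the weight-independent bound $|\tr(\sigma^i_{Eis})|\le c(\Gamma)=h(K)$ of Corollary \ref{cor_eisensteinbound}. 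This is a correct and self-contained alternative, and it is in fact the mechanism this paper uses for its own new results (Corollaries \ref{cor_asym} and \ref{cor_asymgl}); what it buys is that the same computation extends to principal congruence subgroups and to $\gl(\O)$, which is the paper's actual contribution. The one point you should not wave at is the nonvanishing of the coefficient of $(k+1)$ in Theorem \ref{thm_leftau} for $\rho=\sigma$: the formula is an alternating sum of four terms, two of which are also linear in $k$ with sign $(-1)^k$, so ``the leading coefficient is positive and comparable to $|D|$'' needs the observation that the first product dominates the others for every discriminant (each odd ramified prime contributes $p-\left(\frac{-1}{p}\right)\ge p-1$ to the first term versus $1+\left(\frac{1}{p}\right)=2$ to the second), not just for $|D|$ large.
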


In the ``vertical" direction, Calegari and Emerton considered in \cite{calegari-emerton} how the size 
of the cohomology, with fixed coefficient module,  varied in a tower of arithmetic groups. 
Their general result when applied to our situation gives the following.

\begin{theorem} (Calegari-Emerton \cite{calegari-emerton}) Let $\Gamma(\p^n)$ denote the 
principal congruence subgroup of level $\p^n$ of a Bianchi group $\sl(\O)$ where $\p$ is an
unramified prime ideal 
of $\O$. Fix $E$. Then 
\begin{enumerate}
\item if the residue degree of $\p$ is one, then 
$$ \dim H^1(\Gamma(\p^n),E) \ll p^{2n},$$
\item   if the residue degree of $\p$ is two, then 
$$ \dim H^1(\Gamma(\p^n),E) \ll p^{5n}$$
\end{enumerate}
as $n$ increases.
\end{theorem}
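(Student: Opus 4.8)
The plan is to follow the non-commutative Iwasawa-theoretic approach of Calegari and Emerton. First I would pass to $p$-adic coefficients (using that $\dim_{\C} H^1(\Gamma(\p^n),E) = \dim_{\Q_p} H^1(\Gamma(\p^n), E\otimes \Q_p)$) and assemble the whole tower into the \emph{completed cohomology}
$$\tilde{H}^i = \varprojlim_s \varinjlim_n H^i(\Gamma(\p^n), E/p^s).$$
This carries a continuous action of the compact $p$-adic analytic group $G_\p := \sl(\O_\p)$, where $\O_\p$ is the completion of $\O$ at $\p$, and is thereby a module over the Iwasawa algebra $\Lambda := \Z_p[[G_\p]]$. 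The dimension $\delta$ of $G_\p$ as a $p$-adic analytic group is dictated by the residue degree of $\p$: if the residue degree is one then $\O_\p \cong \Z_p$ and $\delta = \dim \sl(\Q_p) = 3$, while if the residue degree is two then $\O_\p$ is the unramified quadratic extension of $\Z_p$ and $\delta = \dim_{\Q_p} \sl(\Q_{p^2}) = 6$.

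Next I would recover each finite layer from the completed cohomology. The Hochschild--Serre/Emerton spectral sequence
$$E_2^{a,b} = H^a_{\mathrm{cont}}(G_{\p,n}, \tilde{H}^b) \Longrightarrow H^{a+b}(\Gamma(\p^n), E\otimes \Q_p),$$
where $G_{\p,n}$ is the $n$-th congruence subgroup of $G_\p$, reduces the estimate to bounding the growth of the terms $H^a_{\mathrm{cont}}(G_{\p,n}, \tilde{H}^b)$ as $n\to\infty$. Here the key mechanism is the dimension theory of $\Lambda$-modules: for a finitely generated $\Lambda$-module $M$ of codimension $c$ over the $\delta$-dimensional algebra $\Lambda$, the $G_{\p,n}$-(co)homology grows like $p^{n(\delta-c)}$. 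Consequently an upper bound for $\dim H^1$ follows from a \emph{lower} bound on the codimension of the completed cohomology modules contributing in total degree $1$.

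The crux of the matter --- and the step I expect to be the main obstacle --- is the codimension bound $\mathrm{codim}_\Lambda \tilde{H}^i \geq q_0$, where $q_0 = (d - l_0)/2$ is read off the real group: for $\sl(\C)$ acting on the symmetric space $\H$ one has $d = 3$ and fundamental defect $l_0 = 1$, so $q_0 = 1$. Proving this requires the admissibility of $\tilde{H}^i$ (so that its Pontryagin dual is finitely generated over $\Lambda$), Poincar\'e duality for completed cohomology, and the vanishing $\tilde{H}^i = 0$ for $i > q_0 + l_0 = 2$; the essential input is that $\tilde{H}^0$ is finite over $\Z_p$, which via duality forces the surviving completed cohomology, and in particular $\tilde{H}^1$, to have codimension at least $q_0$. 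This is exactly the arithmetic content that goes beyond soft counting.

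Finally I would assemble the exponents. With the codimension saving $c \geq q_0 = 1$, the growth exponent for $H^1$ is at most $\delta - 1$. In the residue degree one case this is $3 - 1 = 2$, giving $\dim H^1(\Gamma(\p^n),E) \ll p^{2n}$; in the residue degree two case it is $6 - 1 = 5$, giving $\dim H^1(\Gamma(\p^n),E) \ll p^{5n}$, as claimed. It is worth noting that the naive bound from the volume --- equivalently from the index $[\Gamma(\p):\Gamma(\p^n)] \sim p^{\delta n}$ --- would only yield the exponents $3$ and $6$; the improvement by one full power of $p^n$ is precisely the codimension $q_0 = 1$ contributed by the defect $l_0 = 1$ of the complex group $\sl(\C)$.
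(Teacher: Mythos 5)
The paper offers no proof of this statement: it is quoted verbatim from Calegari--Emerton as an external input, so there is nothing internal to compare against. Your sketch is a faithful outline of the argument in that reference --- completed cohomology as a module over the Iwasawa algebra $\Z_p[[\sl(\O_\p)]]$ of dimension $\delta=3$ or $6$ according to the residue degree, a codimension saving coming from the defect of $\sl(\C)$ (Calegari--Emerton state the key bound as $\mathrm{codim}\,\tilde{H}^{q_0}\geq l_0$ rather than $\geq q_0$, though the two coincide here since $q_0=l_0=1$), and the resulting growth exponents $\delta-1=2$ and $5$ --- and it reproduces exactly the bounds asserted, including the observation that they improve the trivial volume bounds $p^{3n}$ and $p^{6n}$ by one power of $p^{n}$.
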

Note that the trivial upper bounds are $p^{3n}$ and $p^{6n}$ respectively. It is natural 
to look at these asymptotics from the perspective of the volume which is a topological 
invariant in our setting. Observe that the volume of $Y_{\Gamma(\p^n)}$ is given by 
a constant times the index of $\Gamma(\p^n)$ in the Bianchi group $\sl(\O)$. Thus 
asymptotically, the trivial asymptotic upper bound for the above cohomology groups 
is {\em linear} in the volume and the above upper bounds of Calegari and Emerton can be interpreted 
as {\em sublinear}.

Using the techniques discussed in this paper, we can derive the following lower bounds. 

\begin{proposition}\label{prop_asym}
Let $p$ be a rational prime that is unramified in $K$ and let $\Gamma(p^n)$ denote the  principal congruence subgroup of level $(p)^n$ of a Bianchi group $\sl(\O)$. 
\begin{enumerate} 
\item Then $$ \dim H^1_{cusp}(\Gamma(p^n),E_{k,k})\gg k$$
as $k$ increases and $n$ is fixed,
\item Assume further that $K$ is of class number one. Then 
$$\dim H^1_{cusp} (\Gamma(p^n),\C)\gg p^{3n}$$ 
as $n$ increases.
\end{enumerate}
\end{proposition}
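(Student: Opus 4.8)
The plan is to assemble Proposition \ref{prop_asym} directly from the machinery already developed, feeding the Lefschetz numbers and the Eisenstein traces into the lower bound of Proposition \ref{lefschetzlowerbound}. Recall that proposition gives
$$\dim H^1_{cusp}(\Gamma,E) \geq \frac{1}{2}\bigl| L(\sigma,\Gamma,E) + \tr(\sigma^1_{Eis}) - \tr(\sigma^2_{Eis}) - \tr(\sigma^0)\bigr|,$$
so for each of the two asymptotic regimes the task reduces to estimating the dominant term on the right and checking that it does not cancel against the others.

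For part (1), I would fix $n$, take $E=E_{k,k}$ with $k>0$, and read off the Lefschetz number from Corollary \ref{cor_lefprin}: it equals $-2^{t-1}\cdot\frac{p^{3n}-p^{3n-2}}{12}(k+1)$ (or the analogous expression with $2^t$), which grows linearly in $k$ with a fixed nonzero leading coefficient depending only on $p$, $n$ and $K$. Meanwhile $\tr(\sigma^0)=0$ since $E_{k,k}$ is irreducible for $k>0$, and by Corollary \ref{cor_eisensteinbound} both Eisenstein traces satisfy $|\tr(\sigma^i_{Eis})|\leq c(\Gamma(p^n))$, a constant independent of $k$. Plugging into Proposition \ref{lefschetzlowerbound}, the $L$-term dominates: the absolute value is bounded below by $C_1 k - C_2$ for positive constants $C_1,C_2$ depending on the level and field, giving $\dim H^1_{cusp}(\Gamma(p^n),E_{k,k})\gg k$. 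The only thing to verify is that the leading coefficient $A+2B$ is genuinely nonzero, which is immediate from Theorem \ref{thm_table}.

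For part (2), I would set $k=0$ (so $E=\C$), fix $K$ of class number one and $p$ inert, and let $n\to\infty$. Here I want exact values rather than bounds. The Lefschetz number comes from Corollary \ref{cor_lefprin} again: since $p$ is inert it is unramified, so $L(\sigma,\Gamma(p^n),\C)$ is a fixed power of $2$ times $\frac{p^{3n}-p^{3n-2}}{12}$, growing like $p^{3n}$. The Eisenstein traces are now pinned down exactly: $\tr(\sigma^2_{Eis})=-2^{t-1}\cdot(p^{2n}-p^{2n-2})+1$ from Theorem \ref{trace2_eisenstein}, and $\tr(\sigma^1_{Eis})=-(p^{2n}-p^{2n-2})$ for $n>1$ from Theorem \ref{trace1_eisenstein}, while $\tr(\sigma^0)=\tr(\sigma^0_{Eis})=1$. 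All of these Eisenstein contributions are $O(p^{2n})$, hence of strictly smaller order than the $p^{3n}$ coming from $L$. Substituting into Proposition \ref{lefschetzlowerbound} and using the triangle inequality, the $p^{3n}$ term survives and dominates, yielding $\dim H^1_{cusp}(\Gamma(p^n),\C)\gg p^{3n}$.

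The step I expect to require the most care is confirming the order-of-magnitude separation in part (2): one must be sure the leading $p^{3n}$ coefficient in the Lefschetz number is nonzero and is not reduced to a lower order by the precise combination $L + \tr(\sigma^1_{Eis}) - \tr(\sigma^2_{Eis}) - \tr(\sigma^0)$. Since the three Eisenstein/degree-zero terms are each $O(p^{2n})$ while $L \sim p^{3n}$, no such cancellation can occur for $n$ large, so the argument closes; the class-number-one, inert hypotheses are exactly what make the $H^1_{Eis}$ trace of Theorem \ref{trace1_eisenstein} available, which is why part (2) is stated under those restrictions.
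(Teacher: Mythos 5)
Your proposal is correct and follows essentially the same route as the paper: both feed Corollary \ref{cor_lefprin} into Proposition \ref{lefschetzlowerbound}, using Corollary \ref{cor_eisensteinbound} to control the Eisenstein terms uniformly in $k$ for part (1) and the exact traces of Theorems \ref{trace1_eisenstein} and \ref{trace2_eisenstein} for part (2). Your added checks (nonvanishing of $A+2B$, the $O(p^{2n})$ versus $p^{3n}$ order separation) are exactly the points the paper leaves implicit.
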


\begin{proof} 
Recall from Proposition \ref{lefschetzlowerbound} that $${\rm dim} H^1_{cusp}(\Gamma, E_{k,k}) \geq \dfrac{1}{2}\bigg( L(\sigma,\Gamma,k) +\tr(\sigma^1_{Eis},\Gamma, k) - \tr(\sigma^2_{Eis},\Gamma, k)\bigg).$$ When $\Gamma$ is fixed, by Corollary \ref{cor_eisensteinbound} the dimension of the Eisenstein part of the cohomology is the same for every weight $k>0$. Hence, the asymptotic for (1) is given by Corollary \ref{cor_lefprin}.

The claim in (2) follows directly from Theorems \ref{trace1_eisenstein} and \ref{trace2_eisenstein}, together with the Lefschetz number formula provided in Corollary \ref{cor_lefprin}..
\end{proof}


\subsection{Lower bounds for $\gl$}

 In this section we will discuss the trace of  $\sigma$ on the cohomoogy of $\gl(\O)$. For convenience let us put $\Gamma=\sl(\O)$ and $G=\gl(\O)$. 

Let us start with a couple of observations. As $G=\Gamma \rtimes \langle \beta \rangle$ with $\beta:=( \begin{smallmatrix} -1 & 0 \\ 0&1 \end{smallmatrix})$ and $\beta$ acts trivially on the cusps of $\Gamma$, the groups $\Gamma$ and $G$ have the same cusps. Given a cusp $c$, its stabilizer in $G$  (modulo $\pm Id$) 
is of the form $\Z^2 \rtimes \Z/ 2\Z$. This implies that the connected components of the boundary of Borel-Serre compactification of $Y_G$ are 2-orbifolds whose underlying manifolds are 2-spheres. In turn, the cohomology of the boundary vanishes and we get 
$$H^1(G,E_{k,k})=H^1_{cusp}(G,E_{k,k}).$$ 

From the inflation-restriction sequence we see that 
$$H^1(G,E_{k,k}) = H^1(\Gamma,E_{k,k})^{\left \langle \beta \right \rangle}.$$ 
The involutions $\sigma^1$ and $\tau^1$ commute and $\sigma^1\tau^1$ equals the action of $\beta$. Hence we get 
$$H^1(G,E_{k,k}) = H^1(\Gamma,E_{k,k})^{\sigma^1\tau^1}.$$ 
Counting the dimensions of the common eigenspaces, we see by comparison that 
$$\tr(\tau^1, \Gamma,E_{k,k}) + \tr(\sigma^1, \Gamma,E_{k,k})= 2 \cdot \tr (\sigma^1,G, E_{k,k}).$$ 
The matrix $\beta$ acts on $E_{k,k}$ trivially and acts as $-Id$ on $H^1(\partial X_\Gamma,E_{k,k})$. This implies that 
$$\tr(\tau^1_{Eis}, \Gamma, E_{k,k}) = - \tr(\sigma^1_{Eis}, \Gamma, E_{k,k}).$$ 
Using this last identity, together with the previous facts, we get (dropping $E_{k,k}$ from the notation for convenience) 
$$L(\tau,\Gamma)+L(\sigma,\Gamma)=-4 \cdot \tr(\sigma^1,G)+\tr(\tau^0,\Gamma)+\tr(\sigma^0,\Gamma)+\tr(\tau_{Eis}^2,\Gamma)+\tr(\sigma_{Eis}^2,\Gamma).$$

Using results from previous sections, we get the following simplified formula for the trace of  $\sigma$ on $H^1(\gl(\O), E_{k,k})$.

\begin{theorem} \label{thm_tracegl}
Let $L(\tau, \sl(\O), E_{k,k})$ and $L(\sigma, \sl(\O), E_{k,k})$ be as in Theorem \ref{thm_leftau}. Then, 
$$\tr(\sigma^1\mid H^1(\gl(\O),E_{k,k}))=\dfrac{-1}{4} \bigg (L(\tau,\sl(\O),E_{k,k})+L(\sigma,\sl(\O),E_{k,k})+2^t-4\cdot \delta(k,0) \bigg )$$ 
where $t$ is the number of rational primes which ramify over $K$ and $\delta(k,0)$ is the Kronecker $\delta$-function as defined in Theorem  \ref{trace2_eisenstein}.
\end{theorem}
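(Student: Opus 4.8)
The plan is to establish the stated formula by assembling the linear relations already derived in the preamble to Theorem \ref{thm_tracegl}, and then substituting the explicit Eisenstein and degree-zero trace values computed in Section \ref{section_eisenstein}. The central identity to exploit is the one obtained by counting common eigenspaces of the commuting involutions $\sigma^1$ and $\tau^1$,
$$L(\tau,\Gamma)+L(\sigma,\Gamma)=-4 \cdot \tr(\sigma^1,G)+\tr(\tau^0,\Gamma)+\tr(\sigma^0,\Gamma)+\tr(\tau_{Eis}^2,\Gamma)+\tr(\sigma_{Eis}^2,\Gamma),$$
where I write $\Gamma=\sl(\O)$ and $G=\gl(\O)$. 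Solving this for $\tr(\sigma^1,G)$ immediately produces a formula of the claimed shape, so the entire task reduces to evaluating the four ``correction'' traces on the right-hand side and checking that they collapse to the quantity $2^t-4\cdot\delta(k,0)$ appearing in the statement.

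First I would handle the degree-zero terms. Since $\beta$ acts trivially on $E_{k,k}$, we have $\tau^0=\sigma^0$ on $H^0(\Gamma,E_{k,k})$, and $H^0(\Gamma,E_{k,k})=E_{k,k}^{\Gamma}$ is nonzero only when $k=0$, in which case it is $\C$ with both involutions acting trivially. Thus $\tr(\tau^0,\Gamma)+\tr(\sigma^0,\Gamma)=2\,\delta(k,0)$. Next I would substitute the second-degree Eisenstein traces for the full Bianchi group: by the ``in particular'' clauses of Theorems \ref{trace2_eisenstein} and \ref{trace2_eisenstein_tau} (the case $N=1$, i.e. $\Gamma=\sl(\O)$), both $\tr(\sigma_{Eis}^2,\Gamma)$ and $\tr(\tau_{Eis}^2,\Gamma)$ equal $-2^{t-1}+\delta(k,0)$, so their sum is $-2^t+2\,\delta(k,0)$.

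Combining these, the correction terms sum to $\tr(\tau^0,\Gamma)+\tr(\sigma^0,\Gamma)+\tr(\tau_{Eis}^2,\Gamma)+\tr(\sigma_{Eis}^2,\Gamma)=2\,\delta(k,0)+(-2^t+2\,\delta(k,0))=-2^t+4\,\delta(k,0)$. Rearranging the displayed identity to isolate $\tr(\sigma^1,G)$ gives
$$\tr(\sigma^1,G)=\frac{-1}{4}\Big(L(\tau,\Gamma)+L(\sigma,\Gamma)-\big(-2^t+4\,\delta(k,0)\big)\Big)=\frac{-1}{4}\Big(L(\tau,\Gamma)+L(\sigma,\Gamma)+2^t-4\,\delta(k,0)\Big),$$
which is exactly the asserted formula, with $H^1(G,E_{k,k})$ identified with the full first cohomology because the boundary of $Y_G$ has vanishing cohomology (the $2$-orbifold/$2$-sphere observation established just above).

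I do not expect a serious obstacle here, since all the hard analytic input lives in the already-proven results of Section \ref{section_eisenstein}; the one point warranting care is the sign and normalization in the eigenspace-counting relation and the identity $\tr(\tau^1_{Eis},\Gamma)=-\tr(\sigma^1_{Eis},\Gamma)$, which is what causes the first-degree Eisenstein contributions to cancel entirely and explains why only the degree-zero and degree-two corrections survive. The main thing to verify is therefore purely bookkeeping: that the factor $-4=-|\langle\beta\rangle|\cdot 2$ from the eigenspace count is applied consistently and that the specialization $N=1$ of the Eisenstein theorems is legitimate for the full Bianchi group.
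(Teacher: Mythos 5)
Your proposal is correct and follows essentially the same route as the paper: the paper derives the identity $L(\tau,\Gamma)+L(\sigma,\Gamma)=-4\,\tr(\sigma^1,G)+\tr(\tau^0,\Gamma)+\tr(\sigma^0,\Gamma)+\tr(\tau_{Eis}^2,\Gamma)+\tr(\sigma_{Eis}^2,\Gamma)$ in the preamble and then obtains the theorem by exactly the substitutions you make, namely $\tr(\tau^0,\Gamma)+\tr(\sigma^0,\Gamma)=2\,\delta(k,0)$ and $\tr(\tau_{Eis}^2,\Gamma)+\tr(\sigma_{Eis}^2,\Gamma)=-2^t+2\,\delta(k,0)$ from Theorems \ref{trace2_eisenstein} and \ref{trace2_eisenstein_tau}. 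Your bookkeeping, including the cancellation of the degree-one Eisenstein terms via $\tr(\tau^1_{Eis},\Gamma)=-\tr(\sigma^1_{Eis},\Gamma)$, matches the paper's argument.
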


 Using Theorem \ref{thm_leftau} and the fact that 
$$\dim H^1(\gl(\O),E_{k,k}) \geq |\tr(\sigma^1,\gl(\O),E_{k,k})|,$$
we get the following asymptotics.
 
 \begin{corollary}\label{cor_asymgl}
 Let $D$ be the discriminant of $K/\Q$ and $\O_K$ be its ring of integers. As $K/\Q$ is fixed and $k\to \infty$, we have 
$$\dim H^1(\gl (\O_K),E_{k,k})\gg k$$ 
where the implicit constant depends on the discriminant $D$. As $k$ is fixed and $|D| \to \infty$, we have 
$$\dim H^1(\gl(\O_K),E_{k,k}) \gg \varphi (D)$$ 
where $\varphi$ is the Euler phi-function and the implicit constant depends on the weight $k$. 
 \end{corollary}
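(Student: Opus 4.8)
The plan is to combine the closed formula of Theorem \ref{thm_tracegl} with the explicit Lefschetz numbers of Theorem \ref{thm_leftau}, and then to apply the elementary inequality $\dim H^1(\gl(\O),E_{k,k}) \geq |\tr(\sigma^1,\gl(\O),E_{k,k})|$ recorded just before the corollary. Substituting the $q=1$ value (for $\tau$) and the $q=-1$ value (for $\sigma$) into Theorem \ref{thm_tracegl} expresses $\tr(\sigma^1,\gl(\O),E_{k,k})$ as an explicit quasi-polynomial in $k$ whose coefficients are products of the local factors $p+(q/p)$, $1+(-q/p)$, $1+(-2q/p)$ and $1+(-3q\mid p)$ over the ramified primes $p\mid D$, plus the bounded correction $2^t-4\delta(k,0)$. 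Both asymptotics then reduce to reading off the size of this expression.

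For the regime $K$ fixed and $k\to\infty$, I would note that for fixed $D$ the expression is a degree-one quasi-polynomial in $k$: the linear growth comes from the two $(k+1)$-lines and the two torsion lines of Theorem \ref{thm_leftau}, while $2^t$ and $\delta(k,0)$ stay bounded. Extracting its leading coefficient $c_D^{\pm}$ (one value for each parity of $k$) and checking $c_D^{\pm}\neq 0$ is then a finite computation for each $K$; its dominant piece is $\tfrac{1}{12}\bigl(P_1(-1)-P_1(1)\bigr)$, where $P_1(q):=\prod_{p\mid D}(p+(q/p))$ carries the usual prime-$2$ factor, the torsion lines contributing only bounded local corrections. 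Non-vanishing of $c_D^{\pm}$ gives $|\tr(\sigma^1,\gl(\O),E_{k,k})|\gg_D k$, hence $\dim H^1(\gl(\O),E_{k,k})\gg_D k$.

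For the regime $k$ fixed and $|D|\to\infty$, the only terms that can grow with $|D|$ are the two line-one products $P_1(1)$ and $P_1(-1)$, each of order $\varphi(D)$: as $D$ is a fundamental discriminant its odd part is squarefree, so $\prod_{p\mid D,\,p\neq 2}(p-1)=\varphi(\text{odd part of }D)\asymp\varphi(D)$, and each factor $p\pm 1$ exceeds $p-1$. Every remaining product is built from local factors lying in $\{0,2\}$, hence is $O(2^t)=O(|D|^{o(1)})$ and negligible against $\varphi(D)$. Consequently the $|D|$-leading behaviour of $\tr(\sigma^1,\gl(\O),E_{k,k})$ is governed by $P_1(-1)-P_1(1)=\prod_{p\mid D}\bigl(p+(-1/p)\bigr)-\prod_{p\mid D}(p+1)$, and the corollary comes down to showing $|P_1(-1)-P_1(1)|\gg\varphi(D)$.

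This last estimate is the main obstacle, and it is arithmetic rather than formal. Because Theorem \ref{thm_tracegl} packages the \emph{sum} $L(\tau)+L(\sigma)$, the two leading terms of size $\asymp\varphi(D)$ nearly cancel: one finds $P_1(-1)-P_1(1)=\prod_{p\mid D}(p+1)\bigl(\prod_{p\equiv 3\,(4)}\tfrac{p-1}{p+1}-1\bigr)$, so the difference is $\gg\varphi(D)$ precisely when $1-\prod_{p\equiv 3\,(4)}\tfrac{p-1}{p+1}$ is bounded away from $0$, i.e.\ when $D$ carries a not-too-large prime factor $\equiv 3\pmod 4$ (or, when $2\mid D$, the extra asymmetry between the prime-$2$ factors of $P_1(1)$ and $P_1(-1)$). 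I would control this factor via the arithmetic of fundamental discriminants — for example, $D\equiv 1\pmod 4$ forces $|D|\equiv 3\pmod 4$ and hence a prime factor $\equiv 3\pmod 4$ — and pin down that the surviving term is $\gg\varphi(D)$, determining along the way exactly which discriminants, if any, must be set aside. The Lefschetz machinery reduces the corollary to this elementary-looking but delicate lower bound, which is where the real work lies.
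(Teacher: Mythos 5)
Your approach is exactly the paper's: the entire proof given there is the single sentence ``Using Theorem \ref{thm_leftau} and the fact that $\dim H^1(\gl(\O),E_{k,k}) \geq |\tr(\sigma^1,\gl(\O),E_{k,k})|$, we get the following asymptotics,'' so everything of substance in your write-up is detail the paper omits. For the first regime your plan is sound in outline, but note that when $D$ is fixed the coefficient of $k$ is \emph{not} dominated by $\tfrac{1}{12}\bigl(P_1(-1)-P_1(1)\bigr)$ --- for $D=-p$ that difference equals $-2$, the same order as the contributions of the other three lines of Theorem \ref{thm_leftau} --- so the non-vanishing of $c_D^{\pm}$ really must be checked term by term for each parity of $k$; the paper does not do this either.

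The obstacle you isolate in the second regime is genuine, and you should regard it as a gap in the corollary itself rather than as ``remaining work'' that a more careful argument would dispatch. Take $D=-p$ with $p\equiv 3\pmod 4$ prime (every odd prime discriminant of an imaginary quadratic field has this form). Then $P_1(1)=p+1$ and $P_1(-1)=p+\left(\frac{-1}{p}\right)=p-1$, so the two $\varphi(D)$-sized leading terms of $L(\tau)+L(\sigma)$ cancel down to $\tfrac{-2(k+1)}{12}$; every other term of Theorem \ref{thm_leftau} is $O_k(2^t)=O_k(1)$, and Theorem \ref{thm_tracegl} then gives $|\tr(\sigma^1,\gl(\O),E_{k,k})|=O_k(1)$, nowhere near $\varphi(D)=p-1$. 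The same degeneration occurs for $D=-8p$ and similar families. Hence the inequality $\dim H^1\geq|\tr(\sigma^1)|$ combined with the quoted Lefschetz numbers cannot yield $\gg\varphi(D)$ along such discriminants; it does so only when $D$ has a bounded prime factor congruent to $3$ modulo $4$, or an even part forcing $(-1\,|\,2)=-1$, exactly as your factorization $P_1(1)-P_1(-1)=P_1(1)\bigl(1-\prod_{p\equiv 3\,(4)}\tfrac{p-1}{p+1}\cdot(\cdots)\bigr)$ predicts. Since the cancellation is intrinsic to the $\gl/\sl$ situation (the $\gl$ trace is forced to be the average of the $\sigma$- and $\tau$-traces, whereas Rohlfs's $\gg\varphi(D)$ bound for $H^1_{cusp}(\sl(\O),\C)$ uses $L(\sigma)$ alone and suffers no cancellation), your proposal cannot be completed as stated, and the honest conclusion is that the second assertion of the corollary does not follow from the paper's own method for general $D$.
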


Note that one can write a more precise formula for the lower bounds above. As the formulas for the Lefschetz numbers are 
complicated, we stated our results in a slightly weaker form for the sake of simplicity .



\end{document}